\documentclass{amsart}

\usepackage{times}
\usepackage[margin=1in]{geometry}
\usepackage{enumerate}
\usepackage{amssymb}
\usepackage{latexsym}
\usepackage{longtable}
\usepackage{graphics}
\usepackage{comment}
\usepackage{amsmath} 
\usepackage{multirow}
\copyrightinfo{2025}{}
\allowdisplaybreaks[1]

\newcommand{\Dif}{\mathrm{Dif}}
\newcommand{\SubF}{\mathrm{SubF}}
\newcommand{\dl}{\mathrm{dl}}
\newcommand{\Fac}{\mathrm{Fac}}

\theoremstyle{definition}
\newtheorem{definition}{Definition}[section]

\theoremstyle{plain}
\newtheorem{lemma}[definition]{Lemma}
\newtheorem{theorem}[definition]{Theorem}
\newtheorem{proposition}[definition]{Proposition}
\newtheorem{corollary}[definition]{Corollary}
\newtheorem{example}[definition]{Example}
\newtheorem{remark}[definition]{Remark}
\newtheorem{question}[definition]{Question}
\newtheorem{problem}[definition]{Problem}

\begin{document}

\title[Subindices and subfactors of infinite groups]
{Subindices and subfactors of infinite groups and numbers}

\author{M.H. Hooshmand}
\address{Department of Mathematics, Shi.C., Islamic Azad University, Shiraz, Iran}
\email{MH.Hooshmand@iau.ac.ir}
\email{hadi.hooshmand@gmail.com}

\maketitle

\begin{abstract}
The theory of subfactors of groups, together with the associated notions of subindices and index stability for groups and their subsets, has recently been introduced and systematically developed. These concepts exhibit deep connections with additive combinatorics and number theory, relating to important topics such as packing and covering numbers, syndetic sets, group diameters, special integer sequences (e.g., primes and Fibonacci numbers), and classical rational sequences (e.g., Bernoulli numbers). Following the initial paper presented in 2020, two subsequent works further investigated these ideas within the framework of finite groups.
In the present paper, in addition to advancing several aspects of the topic, we focus on infinite groups, with particular emphasis on groups of numbers. In this context, we introduce the RSFA (Right Subfactor Algorithm) for infinite groups and resolve several previously open problems. One of the important results is that every infinite group is index-unstable.
We also correct several earlier inaccuracies and establish a weak version of a conjecture concerning differences of prime numbers.
 Furthermore, we determine the exact subindices for several notable sequences of integers and provide a general criterion for index stability and non-index stability of subsets in countable groups. Finally, we investigate the index stability of infinite groups and present a collection of related projects, problems, questions, and conjectures.
\end{abstract}

\maketitle

\enlargethispage*{4mm}
\thispagestyle{empty}

\section{Introduction and preliminaries} \label{Introduction}
The study of factorizations of groups offers rich mathematical landscapes intertwined with questions. In some recent studies, a challenging open question concerning the factorization of finite groups by subsets, earlier in 2014 \cite{OverflowFactorSubsets}, and then in the Kourovka Notebook (Problem 19.35, 20.37),  is stated. The question has only been partially answered in existing literature (e.g., see \cite{KourovkaNotebook, Hooshmand2021}). Motivated by inquiries into the mentioned topic, Upper Periodic Subsets,  and functional equations on groups and semigroups, we were led to introduce subfactors and the generalization of subgroup indices to arbitrary subsets of groups \cite{Hooshmand2020}. The theory of subfactors and subindices of groups has been further developed in subsequent papers, such as \cite{Hooshmand2021, Hooshmand2023}. Also, in \cite{Kabenyuk}, a connection between the topic and an interesting concept from graph theory has been introduced.

One of the central constructs we employ is that of the \emph{direct product} of two subsets.
 For arbitrary subsets \( A, B \subseteq G \), the product \( AB \) is said to be \emph{direct}, denoted \( A \cdot B \), if each element of \( AB \) has a unique representation \( ab \) with \( a \in A \) and \( b \in B \).
For nonempty subsets $A,B$, we have \(AB=A\cdot B\) if and only if \(A^{-1}A\cap BB^{-1}=\{1\}\), and if and only if \(aB\cap a'B=\emptyset\) for all distinct elements  \(a,a'\in A\)
 (see \cite{Hooshmand2020, Hooshmand2021,Hooshmand2023} for more details).
	Note that \(G=A \cdot B\) if and only if \(G=A B\) and the product \(A B\) is direct
	(the additive notation is \(G=A\dot{+} B\)).

	If \(G=A \cdot B\) , then \(A\) is a left  factor of \(G\) related to \(B\),
and call  \(B\) a \emph{right factor complement} of \(A\).
	We call \(A\) a left factor of \(G\) if and only if \(G=A \cdot B\) for some \(B\subseteq G\).
	For example, every subgroup is a left (resp. right) factor
	related to its right (resp. left) transversal, hence it is a two-sided factor of \(G\).
	In reference \cite{Hooshmand2020}, the author achieved a generalization of factors
	that not only do not have the deficiency of factors but also lead to the important concept of subindices for all subsets of groups.\\
	Let \(A\) be a fixed subset of \(G\). We call \(B\)  a right subfactors of \(G\)
	related to \(A\)  if \(B\) is an inclusion-maximal subset of \(G\) with respect to the property \(AB=A\cdot B\).
Also, we say \(B\) is a right subfactor of \(G\) if it is a right subfactor related to some subset
	of \(G\) (left subfactors are defined analogously). \\
	It is proved that every subset of a group has related right and left subfactors.
	Also, \(B\) is a right subfactor (of \(G\)) related to \(A\) if and only if
	\begin{equation} \label{SubfactorCondition}
		A^{-1}A\cap BB^{-1}=\{ 1\}\; , \; A^{-1}AB=G.
	\end{equation}
To analyze the behavior  of factors and subfactors, we also define left and right \emph{difference sets}, \( \Dif_\ell(A) = A^{-1}A \) and \( \Dif_r(A) = AA^{-1} \). A subset \( A \) is said to be a left (resp. right) \emph{difference-generating set} if \( \Dif_\ell(A) = G \) (resp. \( \Dif_r(A) = G \)).
For convenience, in this paper, we often use ``\(\Dif\)'' instead of ``\(\Dif_\ell\)''.  Denoting the \(n\)-iteration of \(\Dif\)  by \(\Dif^n\) we have the collection of nested subsets
\(\{\Dif^n(A)\}_{n=1}^\infty\), and define
\[
\Dif^\infty(A)=\lim_{n\rightarrow \infty} \Dif^n(A):=\bigcup_{n=1}^\infty \Dif^n(A),
\]
namely left \(\infty\)-difference of \(A\).
\cite[Lemma 2.1]{Hooshmand2020} proves that for every non-empty subset \(A\) the left \(\infty\)-difference of \(A\) is a subgroup of \(G\).
Also, we have \(\Dif^n(A)=\Dif^n(gA)\) for all \(1\leq n\leq\infty\), \(g\in G\), and
\begin{equation}
\Dif^\infty(A)=\Dif^\infty(a^{-1}A)=\Dif^\infty(A^{-1}A)=\langle a^{-1}A\rangle=\langle A^{-1}A\rangle=\Dif^\infty(\Dif(A))=\Dif(\Dif^\infty(A))
 \; ; \; \forall a\in A.
\end{equation}
Note that \(\Dif^0(A):=A\) and we don't have \(\Dif^0(A)\subseteq \Dif^1(A)\) in general, but
if \(A\subseteq A^{-1}A\) (e.g., if \(a_0A\subseteq A\) for some \(a_0\in A\) which means \(A\) is left upper \(a_0\)-periodic, see
\cite{Hooshmand2011}), then \(\{\Dif^n(A)\}_{n=0}^\infty\) is also a collection of nested subsets
 and \(\Dif^\infty(A)=\langle A\rangle=\langle \Dif(A) \rangle\). It is interesting to know that
for every arbitrary subset \(A\)
 \[
\langle A\rangle =\langle A\cup\{1\}\rangle=\Dif^\infty(A\cup\{1\})
\]
which gives another definition for the subgroup generated by \(A\).
More information about this can be seen in \cite{Hooshmand2020, Hooshmand2023}.\\
For every $A\subseteq G$, either there exists \underline{the least} non-negative integer
\(N\) such that \(\Dif^N(A)= \Dif^{N+1}(A)\) (equivalently \(\Dif^N(A)\leq G\) if $A\neq \emptyset$)
or the sequence \(\{\Dif^n(A)\}_{n=1}^\infty\) is
strictly increasing. In the first case, we
define \(\dl^\infty (A)=\dl_\ell^\infty (A):=N\) and we have \(\Dif^\infty(A)=\Dif^{N}(A)\), also we put \(\dl^\infty (A):=\infty\)
for the second case. We call \(\dl^\infty (A)\) left \(\infty\)-difference length of \(A\). Therefore \(N=\dl^\infty (A)\) is the least
number (including infinity) satisfying \(\Dif^\infty(A)=\Dif^{N}(A)\).\\
Let \(A\subseteq H\leq G\), and put \(A^1:=A\cup\{1\}\)
(\(A^0:=A\cup\{0\}\) for additive notation).
By \(H=\langle A \rangle^{(N)}\)
we mean \(H=\langle A\rangle\) (equivalently \(H=\langle A^{1}\rangle\)) and \(dl^\infty (A^1)=N\), and say
\(A^1\) is a generating set of \(H\) with left \(\infty\)-difference length \(N\).
For example, in the additive group of integers and \(\mathbb{Z}_6\)  we have
\[
\mathbb{Z}=\langle \mathbb{N} \rangle^{(1)}=\langle 2,3 \rangle^{(\infty)}\;
, \; \mathbb{Z}_6=\langle 0,1,5 \rangle^{(2)}=\langle 2,3 \rangle^{(3)}=\langle 1 \rangle^{(3)}.
\]
For finite groups, if \(|A|>\frac{|G|}{2}\), then \(dl^\infty (A)=1\)
which means $A$ is a left deference generating set for $G$ (see Theorem~\ref{BasicTheorem}), and so \(G=\langle A \rangle^{(1)}\).
As an example of infinite deference length, every finite subset of integers
with more than one element has the \(\infty\)-difference length of infinity.
One can see more information for this in \cite{Hooshmand2020}
that one of them is: for every set \(A\) of integers,  \(\mathbb{Z}=\langle A \rangle^{(\infty)}\) if and only if
\begin{enumerate}[(1)]
\item For every integer \(n\) there exist a natural number \(\kappa=2^q\) and  \(a_1,\cdots,a_\kappa\in A\) such that
\[
n=a_1+\cdots+a_{\frac{\kappa}{2}}-(a_{\frac{\kappa}{2}+1}+\cdots+a_\kappa);
\]
\item Denoting by \(\kappa(n)\) the least \(\kappa\) obtained from (1),  the set of all \(\kappa(n)\)
, where \(n\) runs over all integers, is unbounded above.
\end{enumerate}
\begin{example}\label{WaringExample}
Consider the additive group of integers,  and put $\mathbb{Z}_e=2\mathbb{Z}$, $\mathbb{Z}_o=2\mathbb{Z}+1$, and
\[
A=\{m^2:m\in \mathbb{Z}^+\cup\{0\}\}.
\]
 Then \(dl^\infty (A)=2\), since  \(\Dif(A)=\mathbb{Z}_o\cup 4\mathbb{Z}\), thus \(\Dif^2(A)=\Dif^3(A)=\Dif^\infty(A)=\mathbb{Z}\)
   and so  $\mathbb{Z}=\langle A\rangle^{(2)}$ (we arrive at a related open problem in section 3).\\
Now, put
\(
A'=\{2^m:m\in \mathbb{Z}^+\}\cup\{0\}.
\)
We claim that \(dl^\infty (A')=\infty\)
and so \(\mathbb{Z}_e=\langle A' \rangle^{(\infty)}\).

 For proving it we use the next argument from \cite{stackwaring2022}. Note that \(A'\) has exactly \(m\) positive elements smaller than or equal to \(2^m\). One can form \(m^2\) pairs of them, so \(\Dif(A')\) has at most \(m^2\)  elements with an absolute value smaller than or equal to \(2^m\) if $m>2$ (the larger elements don't play a role, which can be seen by looking at the binary expansion). That means \(\Dif^n(A')\) has at most \(m^{2^n}\) elements with an absolute value smaller than or equal to \(2^m\).

Every positive even number \(s\) can be written (uniquely) as the sum of powers of 2 (again, look at the binary expansion): \(s = \sum_{i=1}^{t}{2^{s_i}}\) with \(t\) and each \(s_i\) positive integers. Now \(-2^m \in \Dif(A')\) for every positive integer \(m\), and we can write \(s = 2^{s_t} - (-2^{s_{t-1}}) - (-2^{s_{t-2}}) ... - (-2^{s_1})\) so \(s \in \Dif^t(A')\). A similar sum/difference works for negative even numbers. So if \(dl^\infty (A')=N<\infty\), then \(\Dif^N(A')\) should contain all even numbers.

Now take \(m = N^N\), then \(\Dif^N(A')\) has \(N^{N2^N}=2^{N2^N\log_2 N}\) elements with an absolute value smaller than or equal to \(2^{N^N}\), while there are \(2^{N^N}+1\) even numbers, which is (much) more.
Therefore, such an \(N\) does not exist and so \(dl^\infty (A')=\infty\).
\end{example}
\subsection{Relations of the infinite difference lengthes to the diameter of groups respect to subsets}
The \emph{diameter} of the Cayley graph of $G$ with respect to a symmetric generating set $S$ (or $S \cup S^{-1}$ when $S$ is not symmetric)
 measures the minimal number of steps required to reach every element of $G$ from the identity via successive products of elements in $S$.
Let \( G \) be an arbitrary group and \( S \subseteq G \) a symmetric generating set (i.e.\ \( S = S^{-1} \) and \( 1 \in S \)). Then the \emph{diameter of \( G \) with respect to \( S \)}, denoted \( \mathrm{diam}(G, S) \), is defined as:
\[
\mathrm{diam}(G, S) := \min \left\{ n \in \mathbb{Z}_+ \cup \{\infty\} \,\middle|\, S^n = G \right\},
\]
where \( S^n \) denotes the set of all products of \( n \) elements from \( S \).

Hence
\[
\mathrm{diam}(G, S) = N\in\mathbb{Z}_+  \quad \Leftrightarrow \quad S^N = G \text{ and } S^{N-1} \ne G.
\]

More information can be found in many references such as \cite{BabaiSeress1988, Halasi2021}.
In studying the additive structure of groups, another related concept arise that quantify how a given subset generates or covers the entire group. The \emph{covering number} (or additive basis order) of a subset $A$ of an abelian group $G$  is defined as the smallest integer $n$ such that $nA$ (the $n$-fold sumset) equals $G$. This is a classical notion in additive combinatorics (e.g., see \cite{Additive Number Theory}).  When $A$ is a generating set, the diameter of the Cayley graph for abelian groups coincides with the covering number, as both concepts essentially measure the same generative capacity of $A$.  Understanding the interplay between these notions provides insights into both the growth of sumsets and the combinatorial geometry of groups.\\
The identity $Dif^n(A)=Dif(A)^{2^{n-1}}$, for all $n\geq 1$, implies that
\begin{equation}\label{diam}
\dl^\infty (A)=\left\lceil \log_2 \left( \mathrm{diam}\left( \langle A^{-1}A \rangle, A^{-1}A \right) \right) \right\rceil + 1
\end{equation}
 since $S=A^{-1}A$ is symmetric
and a generating set of $G=\langle A^{-1}A\rangle=\Dif^\infty(A^{-1}A)$.
Note that this formula also considers the case $\left\lceil \log_2 \infty \right\rceil + 1=\infty$
that says $dl^\infty (A)$ is infinite if and only if $\mathrm{diam}\left( \langle A^{-1}A \rangle, A^{-1}A \right)$ is
infinite.  This fact helps us to determine the infinite difference lengthes
of subsets by using the diameter conception topics and also the additive basis order.
For example, consider $A = \{0,1\}$ in $\mathbb{Z}_{10}$, we have: $S = A-A = \{0, 1, 9\}$, $\langle S \rangle = \mathbb{Z}_{10}$,
and $\mathrm{diam}(\mathbb{Z}_{10}, S) = 5$. Substituting into the formula \ref{diam}:
\[
\dl^\infty(A) = \left\lceil \log_2(5) \right\rceil + 1 = \lceil 2.3219\ldots \rceil + 1 = 3 + 1 = 4.
\]
Therefore, $\mathbb{Z}_{10}=\langle 0,1\rangle^{(4)}$.
\subsection{Subfactors and subindices of group subsets} For every $A\subseteq G$, put
\[
\Fac_r(A) = \Fac_r(G:A) := \{\, B \subseteq G : B \; \mbox{is a right factor of \(G\) related to \(A\)} \,\},
\]
\[
\SubF_r(A) = \SubF_r(G:A) := \{\, B \subseteq G : B \; \mbox{is a right subfactor of \(G\) related to \(A\)} \,\},
\]
we have \(\Fac_r(G:A) \subseteq \SubF_r(G:A) \neq \emptyset\).
But \(\Fac_r(G:A) \neq \emptyset\) if and only if \(A\) is a left factor of \(G\).
Very often, in our topic, $\SubF_r(G:A)$ can be replaced by $\SubF^1_r(G:A):=\{B\in \SubF_r(G:A): 1\in B\}$.
It has  the following important property:\\
\begin{equation}\label{maximal in C(A)}
\SubF^1_r(A)=\Big\{B\subseteq \mathcal{C}^1(A): 1\in B,\;  BB^{-1}\subseteq \mathcal{C}^1(A),
B \mbox{ is maximal with this property}\Big\}
\end{equation}
where $\mathcal{C}(G:A)=\mathcal{C}(A)=\mathcal{C}_\ell(A)$ denotes the complement of $\Dif(A)$ in $G$,
and $\mathcal{C}^1(A)=\mathcal{C}(A)\cup\{1\}$.\\
Because $AB=A\cdot B$ and $1\in B$ if and only if $1\in B\subseteq BB^{-1}\subseteq \mathcal{C}^1(A)$ (by (\ref{SubfactorCondition})),
and the maximality condition in the both are equivalent. \\
If $A\subseteq H\leq G$ and $X$ is a right transversal of $H$ in $G$, then it is proved that (\cite{Hooshmand2020})
\begin{equation}\label{multiplicative subfactor}
\{BX : B\in\mbox{SubF}_r(H:A)\}\subseteq \mbox{SubF}_r(G:A).
\end{equation}
This inclusion cannot be reversed in general. Now, we claim that
\begin{equation}\label{intersection subfactor}
\SubF_r(H:A)=\{\mathcal{B}\cap H: \mathcal{B}\in  \SubF_r(G:A)\}
\end{equation}
and similar property is held for $\SubF^1_r(H:A)$, $\Fac_r(H:A)$, and $\Fac^1_r(H:A)$.\\
For if  $B\in \SubF_r(H:A)$, then $A^{-1}A(BX)=G$ and $A^{-1}A\cap (BX)(BX)^{-1}=\{1\}$,
for all right transversals $X$ of $H$ in $G$ which can assume $1\in X$. Therefore,
$BX\in \SubF_r(G:A)$ (by (\ref{multiplicative subfactor})) and $BX\cap H=B$. Conversely, if  $\mathcal{B}\in \SubF_r(G:A)$, then putting $B=\mathcal{B}\cap H$,
we have
$$BB^{-1}\subseteq (\mathcal{B}\mathcal{B}^{-1})\cap H\subseteq ((G\setminus A^{-1}A)\cup\{1\})\cap H=
(H\setminus A^{-1}A)\cup\{1\}.$$
Also,  $A^{-1}A\mathcal{B}=G$ requires $A^{-1}AB=H$  thus $B\in \SubF_r(H:A)$ (
there is a similar discussion in  \cite{Hooshmand2020} before Theorem 3.12).\\
Again, we claim that
\begin{equation}\label{transversal subfactor}
\SubF_r(H:A)=\{\mathcal{B}x^{-1}\cap H: \mathcal{B}\in  \SubF_r(G:A) , x\in X\}
\end{equation}
by (\ref{intersection subfactor}) and since $\SubF_r(G:Ax)=\SubF_r(G:A)$.
\\
As another important property, if $A\subseteq H\leq G$, $A'\subseteq K\leq G$,
and $\mathcal{C}(H:A)=\mathcal{C}(K:A')$, then $\SubF^1_r(H:A)=\SubF^1_r(K:A')\subseteq 2^{H\cap K}$
(note that $2^X=P(X)$ denotes the power set of $X$).
In particular, if $\Dif(A)=\Dif(A')$, then $\SubF^1_r(G:A)=\SubF^1_r(G:A')$ (by choosing $H=K=G$).\\
Now, for each subset \(A\) of a group \(G\) we assign right subindices of \(A\) as follows:
\[
|G:A|^+ := \sup\{\, |B| : B \in \SubF_r(G:A) \,\} \; : \; \mbox{right upper index of \(A\) (in \(G\))};
\]
\[
|G:A|^- := \inf\{\, |B| : B \in \SubF_r(G:A) \,\} \; : \; \mbox{right lower index of \(A\) (in \(G\))}.
\]

The left notations \(|G:A|_\pm\) are defined analogously.
Note that \[|G:A|^+=\sup\{\, |B| : B\subseteq G,\;  AB=A\cdot B \,\},\] and the above infimum
in the definition of  \(|G:A|^-\) is indeed the minimum.

\begin{definition}[\cite{Hooshmand2020}] \label{BasicDefinition}
 Let \(G\) be an arbitrary group and \(A \subseteq G\). Then
\begin{enumerate}[a)]
\item \( A\) is called right (resp. left) index stable in \(G\) if \(|G:A|^+ = |G:A|^-\) (resp. \(|G:A|_+ = |G:A|_-\)),
	and we use the notation \(|G:A|_r\) (resp. \(|G:A|_\ell\))
	for the common value and call it the right (resp. left) index of \(A\) in \(G\).
\item \( A\) is called index stable (in \(G\)) if all of its four subindices are equal (equivalently \(|G:A|_r = |G:A|_\ell\)),
	and the common value is denoted by \(|G:A|\) and is called the index of \(A\) in \(G\) (a unique cardinal number corresponding to \(A\)).
\item
	Also, \(G\) is called index stable (resp. right index stable) if all its subsets are index stable (resp. right index stable).
\end{enumerate}
	\end{definition}
It is worth noting that if \(G\) is a group and \(H\) a subgroup, then \(H\) (as a subset) is always index stable in \(G\), but
	as an independent group, \(H\) may be not index stable (i.e., it contains a subset that is not index stable in \(H\)).\\
It is interesting and surprising to know that only 14 distinct finite groups are (right, left) index stable
(see \cite{HooshmandYousefian}),
and we show at the end of this section that there is no any infinite (right, left) index stable group.
This fact has already been proved in \cite{Kabenyuk2} (by a different method).
In the next theorem, we restate and improve some basic properties of subindices in arbitrary groups that we need
for our study in infinite groups.
\begin{theorem} \label{BasicTheorem}
 Let $G$ be an arbitrary group and $A\subseteq G$.
Then
\begin{equation}\label{Maininequality}
|G:A|^+|A|\leq |G|\leq |G:A|^-|\Dif(A)|\; , \quad |G:A|^+\leq |\mathcal{C}(A)|+1.
\end{equation}
Moreover:
\begin{enumerate}[a)]

  \item If \(A\subseteq A'\), then \(|G:A|^+\geq|G:A'|^+\) \( (\) but not necessarily \(|G:A|^-\geq|G:A'|^-\)\( )\).\\

  \item \(|G:A|^+\geq |G:\Dif(A)|^+\geq |G:\Dif^2(A)|^+\geq \cdots \geq |G:\Dif^\infty(A)|^+\).\\

  \item If \(\Dif(A)\leq G\) then \(A\) is right index stable and \(|G:A|_r=|G:\Dif(A)|\).\\

  \item If \(A\subseteq H\leq G\), then \(|G:A|^{\pm}\geq|G:H|\) (also \(|G:A|_{\pm}\geq|G:H|\)). Thus if \(|G:A|^+ = |G:H|\),
  then $A$ is right index stable in $G$.

  \item \(|G:\Dif^n(A)|^{\pm}\geq|G:\langle A\rangle|\) and \(|G:\Dif^n(A)|^{\pm}\geq |G:\Dif^\infty(A)|\), for all \(n\geq 0\).\\

  \item If \(|A|>|G\setminus A|\) then \(\Dif(A)=\Dif_r(A)=AA=G\) and so \(|G:A|=1\) (i.e., $A$ is index stable in $G$ with the index 1).\\

  \item If \(|G:\langle A\rangle|=|G|\), then \(A\) is index stable in \(G\) and \(|G:A|=|G|\) (this is trivial for the finite case).\\

\end{enumerate}
\end{theorem}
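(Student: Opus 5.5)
The plan is to derive everything from the subfactor characterization (\ref{SubfactorCondition}) together with the description (\ref{maximal in C(A)}). For the main inequality, take any $B$ with $AB=A\cdot B$. The map $A\times B\to AB$, $(a,b)\mapsto ab$, is a bijection, so $|A||B|=|AB|\le|G|$. Taking the supremum over $B$ gives $|G:A|^+|A|\le|G|$; for infinite cardinals the supremum of products is handled in the same way.

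For the second inequality, pick a right subfactor $B$ with $|B|=|G:A|^-$; this is possible because the infimum is attained. Then $G=A^{-1}AB$, so $|G|\le|\Dif(A)||B|$. For the last bound, translate: $SubF_r(G:A)$ is invariant under right translation $B\mapsto Bb^{-1}$, so we may assume $1\in B$. Then $B\subseteq BB^{-1}\subseteq\mathcal{C}^1(A)$, which gives $|B|\le|\mathcal{C}(A)|+1$.

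For the enumerated parts:

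(a) If $A\subseteq A'$ and $A'B$ is direct, then $A^{-1}A\cap BB^{-1}\subseteq A'^{-1}A'\cap BB^{-1}=\{1\}$. So every $B$ admissible for $A'$ is admissible for $A$, and the sup formula for $|G:A|^+$ gives the claim.

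(b) Since $|G:\Dif^n(A)|^+$ depends only on $\Dif(\Dif^n(A))$, translate so that $1\in\Dif^n(A)$. Then $\Dif^n(A)\subseteq\Dif^{n+1}(A)$, and (a) applies. For the $\infty$ step use $\Dif^n(A)\subseteq\Dif^\infty(A)$ directly.

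(c) If $H=\Dif(A)\le G$, condition (\ref{SubfactorCondition}) says exactly that $B$ meets each right coset $Hg$ in exactly one point. Hence right subfactors of $A$ are precisely the right transversals of $H$, and all have cardinality $|G:H|$, which is the index of the subset $H$.

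(d) From $A\subseteq H$ we get $A^{-1}A\subseteq H$, so $G=A^{-1}AB\subseteq HB$. Thus $B$ meets every right coset of $H$, and $|B|\ge|G:H|$. The left version is symmetric. If moreover $|G:A|^+=|G:H|$, then $|G:H|\le|G:A|^-\le|G:A|^+=|G:H|$, so $A$ is right index stable.

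(e) Apply (d) with $H=\langle A\rangle$ and with $H=\Dif^\infty(A)$. The first requires $\Dif^n(A)\subseteq\langle A\rangle$, which is clear for $n\ge1$ and holds for $n=0$ trivially. The second requires containment in $\Dif^\infty(A)$. I expect a subtlety at $n=0$, since $A\not\subseteq\Dif^\infty(A)$ in general. To fix it, translate: $|G:A|^\pm=|G:a^{-1}A|^\pm$ because $\Dif$ is unchanged, and $a^{-1}A\subseteq\langle a^{-1}A\rangle=\Dif^\infty(A)$.

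(f) For $g\in G$ we have $|gA^{-1}|=|A|>|G\setminus A|$, so $gA^{-1}$ cannot lie in $G\setminus A$. Hence $gA^{-1}\cap A\neq\emptyset$, i.e.\ $g\in AA$. The same argument with $A^{-1}g$ gives $g\in A^{-1}A$, and similarly $g\in AA^{-1}$. Now (c) applies on both sides with $\Dif(A)=G$, giving index $1$ on both sides.

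(g) By (e), or (d) with $H=\langle A\rangle$, all four subindices are at least $|G|$. Every subfactor has cardinality at most $|G|$. So all four subindices equal $|G|$, and $A$ is index stable with $|G:A|=|G|$.

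The main obstacle is keeping the translation-invariance facts straight, namely $\SubF_r(G:Ax)=\SubF_r(G:A)$ and $\Dif(gA)=\Dif(A)$, in (b) and (e). The nesting $\Dif^n\subseteq\Dif^{n+1}$ and the containment in $\Dif^\infty(A)$ only hold after normalizing so that $1$ lies in the set. A secondary point is the cardinal arithmetic in the main inequality for infinite $A$ and $B$, but $|A\times B|=|A||B|$ handles it uniformly.
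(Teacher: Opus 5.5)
Your proof is correct and follows the paper's route for (b), (d), (e) and (g): you normalize by $a^{-1}A$, use $\Dif(a^{-1}A)=\Dif(A)$, and count the cosets met by $B$. For the displayed inequalities and parts (a), (c), (f), which the paper only cites from earlier work, your direct derivations from (\ref{SubfactorCondition}) are the natural ones.

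One slip in (f): $A^{-1}g\cap A\neq\emptyset$ gives $g\in AA$ again, not $g\in A^{-1}A$. To get $g\in A^{-1}A$ (resp.\ $g\in AA^{-1}$), use that $Ag$ (resp.\ $gA$) has cardinality $|A|>|G\setminus A|$ and therefore meets $A$.
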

\begin{proof}
\begin{enumerate}[a)]

  \item See \cite[Theorem 3.24]{Hooshmand2020}.
  \item This is concluded from (a), since
  \[a^{-1}A\subseteq \Dif(a^{-1}A)\subseteq \Dif^2(a^{-1}A)\subseteq \cdots,\]
  \(\Dif^n(a^{-1}A)=\Dif^n(A)\) for all \(a\in A\) and \(n\geq 1\), and \(|G:a^{-1}A|^+=|G:A|^+\).
  \item See \cite[Theorem 3.12(d)]{Hooshmand2020}.
  \item Note that if \(B\) is a right subfactor of \(G\) relative to \(A\), then \(G=\Dif(A)B\subseteq HB\)
  and so \(G=HB\). This implies \(|B|\geq |G:H|\) for all \(B\in\SubF_r(G:A)\).
  \item This is concluded form (d), since \(\Dif^n(A)\subseteq \langle A\rangle\), for all \(n\geq 0\), and
  \[
  |G:\Dif^n(A)|^{\pm}=|G:\Dif^n(a^{-1}A)|^{\pm}\geq |G:\Dif^\infty(a^{-1}A)|^{\pm}=|G:\Dif^\infty(A)|^{\pm}\; ; \; n\geq 0.
  \]
  (It is worth noting that for \(n=0\) we use the identity \(|G:gA|^{\pm}=|G:A|^{\pm}\), for all \(g\in G\).)
  \item See \cite[Theorem 3.12(e)]{Hooshmand2020}.
  \item This is a direct result of (d).
 \end{enumerate}
\end{proof}
\begin{corollary}\label{CountableCor}
If \(G\) is countable and \(|G:\langle A\rangle|\) is not finite, then \(A\) is index stable and \(|G:A|=\aleph_0\).
\end{corollary}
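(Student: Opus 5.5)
This follows from Theorem~\ref{BasicTheorem}(d), once we see that every subfactor related to \(A\) has cardinality at least \(|G:\langle A\rangle|\) and at most \(|G|\). The first step is to identify the relevant subgroup. Since \(\langle A\rangle\) need not contain \(A^{-1}A\) in the right way when \(1\notin A\), I would work with \(H=\Dif^\infty(A)=\langle a^{-1}A\rangle\) for a fixed \(a\in A\); if \(A=\emptyset\) the statement is handled separately or is degenerate. Translation does not change subindices, since \(|G:a^{-1}A|^{\pm}=|G:A|^{\pm}\). Moreover \(a^{-1}A\subseteq H\leq\langle A\rangle\) gives \(|G:H|\geq|G:\langle A\rangle|\), so \(|G:H|\) is also infinite. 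Alternatively, one can quote Theorem~\ref{BasicTheorem}(e) with \(n=0\) directly:
\[
|G:A|^{\pm}\geq|G:\langle A\rangle| \quad\text{and}\quad |G:A|_{\pm}\geq|G:\langle A\rangle|.
\]

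Next comes the lower bound. Since \(G\) is countable, \(|G:\langle A\rangle|\le |G|\le\aleph_0\). Because this index is assumed not finite, it equals \(\aleph_0\). Hence all four subindices \(|G:A|^{\pm}\) and \(|G:A|_{\pm}\) are at least \(\aleph_0\).

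For the upper bound, every right subfactor \(B\) related to \(A\) is a subset of \(G\), so \(|B|\le|G|=\aleph_0\). The same holds for left subfactors. Therefore \(|G:A|^+\le\aleph_0\) and \(|G:A|_+\le\aleph_0\).

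Combining the two bounds, all four subindices equal \(\aleph_0\). So \(A\) is index stable in the sense of Definition~\ref{BasicDefinition}(b), with \(|G:A|=\aleph_0\). This is exactly the countable case of Theorem~\ref{BasicTheorem}(g), since here \(|G:\langle A\rangle|=\aleph_0=|G|\).

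The only point needing care is the left-sided version of (e), and the left version of (d) is stated there. The empty-set case, if the conventions allow it, should be checked against the definitions; it is otherwise routine.
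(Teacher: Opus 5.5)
Your argument is correct and is essentially the paper's own: the corollary is the countable case of Theorem~\ref{BasicTheorem}(g), where the infinite index $|G:\langle A\rangle|$ is forced to equal $\aleph_0=|G|$. In that argument the lower bound on all four subindices comes from part (d) with $H=\langle A\rangle$, and the upper bound is the trivial $|B|\le|G|$. Your detour through $\Dif^\infty(A)$ is unnecessary, since $A\subseteq\langle A\rangle$ already gives $A^{-1}A\subseteq\langle A\rangle$, so (d) applies directly to $H=\langle A\rangle$.
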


\begin{example}\label{BernouiliSubindexExample} One can determine the subindices of so many subsets
by using the above basic theorem and its corollaries. We mention some interesting examples as follows.\\
\textbf{(a)} If \(H,K\) are subgroups of \(G\) such that \(HK=KH\), then \(H\cup K\) is index stable in \(G\), since
\[(H\cup K)^{-1}(H\cup K) =(H\cup K) (H\cup K)^{-1}=HK\leq G,\] (the intersection is also obviously index stable).\\
For instance, every subset \(A=a\mathbb{Z}\cup b\mathbb{Z}\), where \(a,b\in \mathbb{Z}\), is index stable in \((\mathbb{Z},+)\) and
\(|\mathbb{Z}:a\mathbb{Z}\cup b\mathbb{Z}|=\gcd(a,b)\), because \(a\mathbb{Z}+b\mathbb{Z}=\gcd(a,b)\mathbb{Z}\).\\
\textbf{(b)} The set of irrational numbers is index stable (with index 1) since its difference set is \(\mathbb{R}\).\\
\textbf{(c)} If \(A\subseteq A'\) and \(|G:A|^+=2\), then \(A'\) is right index stable and either \(|G:A'|_r=1\) (equivalently,
\((A')^{-1}A'=G\)) or \(|G:A'|_r=2\). Indeed, putting \(C=A'\setminus A\) we have
\[
\Dif(A')=\Dif(A)\cup \Dif(C)\cup A^{-1}C\cup C^{-1}A,
\]
 and so \(|G:A'|_r=1\) if and only if \(\mathcal{C}(A) \subseteq C^{-1}C\cup A^{-1}C\cup C^{-1}A\).
As an application, since \(|\mathbb{Z}:\{0,1,4,9,\ldots\}|=2\) we have
\(|\mathbb{Z}:\{0,1,4,9,\ldots\}\cup\{ x \}|=2\) for every integer \(x\). Because
\[
\mathcal{C}(\{0,1,4,9,\ldots\})=4\mathbb{Z}+2\nsubseteq \{0,\pm x, \pm (x-1),\pm(x-4),\pm(x-9),\ldots\},
\]
 for every fixed integer \(x\).
But for \(\{0,1,4,9,\ldots\}\setminus\{x\}\) the status is completely different and we have only
\[|\mathbb{Z}:\{0,1,4,9,\ldots\}\setminus\{x\}|^+\geq 2,\]
and it may be non index stable (see Lemma \ref{perfect square}).
\end{example}
\begin{remark}\label{FirstSubindexExample}
Finite subsets of infinite groups \(G\) are index stable with the index \(|G|\) by (\ref{Maininequality})  (see
Theorem~\ref{InfiniteSubindexEquation} for a more general proposition).
All subsets of integers (resp. rationales) are index stable in rationales (resp. reals) with the index \(\aleph_0\) (resp. \(2^{\aleph_0}\)),
by Theorem \ref{BasicTheorem} (d). Therefore, index stability of infinite subsets of \underline{integers in integers} and \underline{rationales in rationales} are challenging.
For instance, the set of perfect squares is index stable with the index \(2\) (in \(\mathbb{Z}\)), but the sequence of positive perfect squares is non-index stable (see Lemma~\ref{perfect square} and \cite{Hooshmand2020}). We will study such interesting topics and problems about numbers in Section 3.
\end{remark}

The inequality $|G:A|^+\leq |\mathcal{C}(A)|+1$ has a related important question that when it takes the equality.
Here we give a characterization for it if $\Dif(A)$ is co-finite.
\begin{theorem}
The following are equivalent:\\
\textbf{$($a$)$} $\mathcal{C}^1(A)\leq G; \;\;\;$  \textbf{$($b$)$} $\mathcal{C}^1(A)\in \SubF^1_r(G:A); \;\;\;$
\textbf{$($c$)$} $\mathcal{C}(A)\mathcal{C}(A)\subseteq C^1(A);$  \\
 \textbf{$($d$)$} $\SubF^1_r(G:A)=\{\mathcal{C}^1(A)\}; \;\;\;$  \textbf{$($e$)$} $\SubF^1_r(G:A)$ is a singleton;
$\;\;\;\textbf{($f$)}$ $A\mathcal{C}^1(A)=A\cdot \mathcal{C}^1(A)$ .\\
Each the above (equivalent) conditions requires $|G:A|^+= |\mathcal{C}(A)|+1$.\\
Also, the condition $|G:A|^+= |\mathcal{C}(A)|+1$  is equivalent to them if $\Dif(A)$ is co-finite.\\
(Note that the condition $\mathcal{C}(A)\mathcal{C}(A)\subseteq C^1(A)$ can be replaced by $\mathcal{C}(A)\mathcal{C}(A)= C^1(A)$ if
$|\mathcal{C}(A)|\geq2$).
\end{theorem}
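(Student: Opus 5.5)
The plan is to use two facts throughout. First, $\mathcal{C}^1(A)$ is symmetric: $\Dif(A)=A^{-1}A$ is symmetric and contains $1$ (for $A\neq\emptyset$), so its complement $\mathcal{C}(A)$ is symmetric and $1\notin\mathcal{C}(A)$. Second, by (\ref{maximal in C(A)}), $\SubF^1_r(G:A)$ consists exactly of the subsets $B$ with $1\in B$ and $BB^{-1}\subseteq\mathcal{C}^1(A)$ that are maximal with this property. I would prove the cycle (a)$\Leftrightarrow$(c), (a)$\Rightarrow$(b)$\Rightarrow$(d)$\Rightarrow$(e)$\Rightarrow$(a), and (b)$\Rightarrow$(f)$\Rightarrow$(a). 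After that I would derive the statements about $|G:A|^+$.

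\textbf{The subgroup equivalences.} For (a)$\Leftrightarrow$(c): a symmetric set containing $1$ is a subgroup iff it is closed under products. Products involving $1$ are harmless, so closure reduces to $\mathcal{C}(A)\mathcal{C}(A)\subseteq\mathcal{C}^1(A)$.

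For (a)$\Rightarrow$(b): $B=\mathcal{C}^1(A)$ satisfies $BB^{-1}=B\subseteq\mathcal{C}^1(A)$. It is maximal, since any admissible $B'$ satisfies $B'\subseteq B'B'^{-1}\subseteq\mathcal{C}^1(A)$.

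For (b)$\Rightarrow$(d): every $B\in\SubF^1_r(G:A)$ lies in $\mathcal{C}^1(A)$, and $\mathcal{C}^1(A)$ itself is admissible. Maximality of $B$ then forces $B=\mathcal{C}^1(A)$. The step (d)$\Rightarrow$(e) is trivial.

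For (b)$\Rightarrow$(f): directness is part of being a subfactor, by (\ref{SubfactorCondition}).

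For (f)$\Rightarrow$(a): directness of $A\mathcal{C}^1(A)$ means $A^{-1}A\cap\mathcal{C}^1(A)\mathcal{C}^1(A)^{-1}=\{1\}$. Hence $\mathcal{C}^1(A)\mathcal{C}^1(A)=\mathcal{C}^1(A)\mathcal{C}^1(A)^{-1}\subseteq\mathcal{C}^1(A)$, which is closure.

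\textbf{The main step, (e)$\Rightarrow$(a).} Let $\SubF^1_r(G:A)=\{B\}$ and fix $c\in\mathcal{C}(A)$. The set $\{1,c\}$ satisfies $\{1,c\}\{1,c\}^{-1}=\{1,c,c^{-1}\}\subseteq\mathcal{C}^1(A)$. The admissibility condition is checked on pairs of elements, so the union of a chain of admissible sets is admissible. Zorn's lemma therefore gives a maximal admissible set containing $\{1,c\}$, and by uniqueness it is $B$. Hence $\mathcal{C}^1(A)\subseteq B\subseteq\mathcal{C}^1(A)$, so $B=\mathcal{C}^1(A)$. Then $BB^{-1}\subseteq\mathcal{C}^1(A)$ gives closure, i.e.\ (a). I expect this Zorn/uniqueness argument to be the only non-routine point.

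\textbf{Subindex consequences.} Under (b), $\mathcal{C}^1(A)$ is a right subfactor of cardinality $|\mathcal{C}(A)|+1$, because $1\notin\mathcal{C}(A)$. Together with $|G:A|^+\le|\mathcal{C}(A)|+1$ from (\ref{Maininequality}), this gives equality.

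For the converse, assume $\mathcal{C}(A)$ is finite and $|G:A|^+=|\mathcal{C}(A)|+1$. Since the possible sizes are bounded by this finite number, the supremum is attained by some right subfactor $B$. Right translation preserves right subfactors: $A(Bb^{-1})$ is direct and $A^{-1}ABb^{-1}=G$. So replacing $B$ by $Bb^{-1}$ with $b\in B$, we may assume $1\in B$. Then $B\subseteq\mathcal{C}^1(A)$ and both sets have the same finite cardinality, so $B=\mathcal{C}^1(A)$, which is (b). Finiteness is essential here, because equal infinite cardinalities do not force equality of nested sets.

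\textbf{The final note.} Suppose (a) holds and $|\mathcal{C}(A)|\ge2$. Then $1=cc^{-1}$ with $c\in\mathcal{C}(A)$. Also, for $c\in\mathcal{C}(A)$, choose $c'\neq c$ in $\mathcal{C}(A)$; then $c=c'(c'^{-1}c)$ with $c'^{-1}c\in\mathcal{C}^1(A)\setminus\{1\}=\mathcal{C}(A)$. Hence $\mathcal{C}(A)\mathcal{C}(A)=\mathcal{C}^1(A)$.
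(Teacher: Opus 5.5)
Your proof is correct and follows essentially the same route as the paper. The paper also gets (a)$\Leftrightarrow$(c)$\Leftrightarrow$(f) from the symmetry and closure of $\mathcal{C}^1(A)$, proves (e)$\Rightarrow$(a) by extending each $\{1,c\}$ to the unique member of $\SubF^1_r(G:A)$, and obtains the co-finite converse by comparing the finite sets $B\subseteq\mathcal{C}^1(A)$. Your version just makes explicit what the paper leaves implicit: Zorn's lemma in place of the cited extension result, attainment of the supremum, the normalization $1\in B$ by right translation, and the standing assumption $A\neq\emptyset$.
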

\begin{proof}
First we may assume $1\in A$, because $\mathcal{C}^1(gA)=\mathcal{C}^1(A)$ and $\SubF^1_r(G:gA)=\SubF^1_r(G:A)$
for all $g\in G$. Thus $A\subseteq \Dif(A)$. Also, we have
$$
\mathcal{C}^1(A)\leq G \iff \mathcal{C}^1(A)\mathcal{C}^1(A)\subseteq C^1(A) \iff \mathcal{C}(A)\mathcal{C}(A)\subseteq C^1(A),
$$
and also equivalent to $\mathcal{C}(A)\mathcal{C}(A)= C^1(A)$ if $|\mathcal{C}(A)|\geq2$.\\
On the other hand the product $A\mathcal{C}^1(A)$ is direct if and only if $\mathcal{C}^1(A)\mathcal{C}^1(A)\subseteq C^1(A)$,
and this implies $$\SubF^1_r(G:A)=\{\mathcal{C}^1(A)\},$$ by (\ref{maximal in C(A)}).
Now, let $\SubF^1_r(G:A)$ be a singleton $\{B\}$. Since the product  $A\{1,\alpha\}$ is direct for every
$\alpha\in \mathcal{C}(A)$, we infer $\{1,\alpha\}\subseteq B$. Therefore, $B=\mathcal{C}^1(A)$.\\
If $\Dif(A)$ is co-finite and $|G:A|^+= |\mathcal{C}(A)|+1$, then $|B|=|\mathcal{C}^1(A)|$
for some $B\in \SubF^1_r(G:A)$,  and so $\mathcal{C}^1(A)=B$ (because $B\subseteq \mathcal{C}^1(A)$
and $\mathcal{C}^1(A)$ is finite).
\end{proof}

We have some information about index stability and subindices of (set-theoretic) subgroup complements.
For example, if $H$ is a non-trivial subgroup of \(G\), then \(|H^c|\geq |H|\) and we have two cases: \(|H^c|> |H|\)
which requires \(|G:H^c|=1\), and   \(|H^c|=|H|\) that is challengeable (especially if $G$ is infinite).
Now we completely characterize its index stability statues dependent on the index of the subgroup.
\begin{theorem}[Characterization of indices of subgroup complements]\label{subgroupcomplement}
For every group \(G\) and subgroup \(H\), the subset $H^c$ is index stable, and we have
\[
|G:H^c|=\left\{\begin{array}{ll} |G|\; &  \; |G:H|=1\\
2\;\;\; & |G:H|=2\\
1\;\;\; & |G:H|>2
\end{array}
\right.
\]
Moreover, if \(|G|>2\), then \(|G:H^c|=|G|\) if and only if \(G=H\), and
\[
|G:H^c|=1 \Leftrightarrow |G:H|>2 \Leftrightarrow H^cH^c=G\; , \;
|G:H^c|=2 \Leftrightarrow |G:H|=2 \Leftrightarrow H^cH^c=H.
\]
\end{theorem}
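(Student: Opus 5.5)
The plan is to compute $\Dif(H^c)=(H^c)^{-1}H^c$ in each of the three cases. Whenever this set is a subgroup, Theorem~\ref{BasicTheorem}(c) makes $H^c$ right index stable with $|G:H^c|_r=|G:\Dif(H^c)|$. The left side is handled symmetrically via $\Dif_r(H^c)=H^c(H^c)^{-1}$. Since $H^c$ is closed under inversion ($h\notin H\iff h^{-1}\notin H$), we have $(H^c)^{-1}H^c=H^cH^c=H^c(H^c)^{-1}$. So the left and right computations coincide, and four-sided stability follows.

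\textbf{Case $|G:H|=1$.} Here $H^c=\emptyset$. With the conventions for the empty set, every $B\subseteq G$ satisfies $\emptyset B=\emptyset\cdot B$, so the only subfactor is $G$ itself. Hence all subindices equal $|G|$.

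\textbf{Case $|G:H|=2$.} Then $H^c=gH=Hg$ is the nontrivial coset. Since $G/H$ has order $2$, $H^cH^c=gHgH=g^2H=H$. So $\Dif(H^c)=H\leq G$, and Theorem~\ref{BasicTheorem}(c) gives $|G:H^c|=|G:H|=2$.

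\textbf{Case $|G:H|>2$.} I will show $H^cH^c=G$, so that $\Dif(H^c)=G$ and the index is $1$.
\begin{itemize}
\item For $h\in H$: pick any $x\notin H$. Then $x^{-1}\notin H$ and $xh\notin H$, and $h=x^{-1}(xh)\in H^cH^c$.
\item For $g\notin H$: I need $x\notin H$ with $x^{-1}g\notin H$, that is, $x\notin H\cup gH$. Such an $x$ exists because there are at least three left cosets. Then $g=x\cdot(x^{-1}g)\in H^cH^c$.
\end{itemize}
This is the only step needing a genuine argument, and it is exactly where $|G:H|>2$ is used.

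\textbf{The ``moreover'' part.} Assume $|G|>2$. The three values $|G|$, $2$, $1$ are then pairwise distinct, so the case split yields each displayed equivalence.
\begin{itemize}
\item $|G:H^c|=|G|$ forces $|G:H|=1$, i.e.\ $G=H$.
\item $|G:H^c|=1$ iff $|G:H|>2$ iff $H^cH^c=G$. The converse direction uses that when $|G:H|=2$ we have $H^cH^c=H\neq G$, and when $H=G$ we have $H^cH^c=\emptyset$.
\item $|G:H^c|=2$ iff $|G:H|=2$ iff $H^cH^c=H$. Here $H^cH^c=H$ rules out the other two cases, since it differs from $G$ when $H\ne G$, and in the case $H=G$ we have $\emptyset\neq H$.
\end{itemize}
Note that $|G|>2$ is needed: in a group of order $2$ with $H$ trivial, the values $|G|$ and $2$ coincide.
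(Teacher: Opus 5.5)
Your proposal is correct and follows the paper's proof. Like the paper, you compute $H^cH^c$ in the three cases, note that $\Dif_\ell(H^c)=\Dif_r(H^c)=H^cH^c$ because $H^c$ is inverse-closed, and apply Theorem~\ref{BasicTheorem}(c) on both sides. Your coset argument for $|G:H|>2$ and your direct treatment of $H^c=\emptyset$ are cleaner than the corresponding steps in the paper; for the empty set, part (c) does not literally apply, since $\Dif(\emptyset)=\emptyset$ is not a subgroup.
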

\begin{proof}
First we have
\[
H^cH^c=\left\{\begin{array}{ll} \emptyset &  \; |G:H|=1\\
H\; & |G:H|=2\\
G\; & |G:H|>2
\end{array}
\right.
\]
Because if \(|G:H|=2\) (resp. \(|G:H|>2\)), then \(H^c=Hg_0\)  for some
 \(g_0\in G\) (resp. for every \(g\in G\) there is \(g'\in G \) such that \(gH\neq g'H\)).
 Thus \(G=Gg_0=Hg_0\cup Hg_0^2=H^c\cup Hg_0^2\) and so \(H^cH^c=Hg_0^2=H\) (resp. \(g=g(g^{-1}g')\in H^cH^c\)).
 Then, since \(\Dif_\ell(H^c)=\Dif_r(H^c)=H^cH^c\), we obtain the results by Theorem~\ref{BasicTheorem}(c) (and
 similar property for the left subindices).

For proving the last equivalent conditions note that
if  \(|G:H^c|=|G|\) then the case \(|G:H|=2\) (resp. \(|G:H|>2\)) gives the contradiction
\(|G|=2\) (resp. \(|G|=1\)). Other remained results can be obtained similarly.
\end{proof}
\begin{example}\label{InfiniteSunindexExample}
The set of odd numbers (resp. irrational numbers) is index stable in integers (resp. reals) with index 2
(resp. index 1). Also, \(\mathbb{Q}\setminus \mathbb{Z}\) is index stable (in \(\mathbb{Q}\)) with index 1, since
\(|\mathbb{Q}:\mathbb{Z}|=\aleph_0>2\).
\end{example}
\begin{remark}\label{Balanced index stable}
The above theorem says every subgroup is ``balanced index stable'' which means both itself and its complement are index stable.
Thus, we can list some (left, right) balanced index stable subsets as follows (it is obvious that \(A\) is
balanced index stable if and only \(A^c\) is so):
\begin{itemize}
\item
All subgroups (and their set-theoretic complements) are balanced index stable.
\item
Every singleton and the empty set (subsets \(A\) such that \(|A|\leq 1\)) are balanced index stable.
\item
Every subset \(A\) of a finite group \(G\) with \(\frac{|G|}{3}<|A|\leq \frac{|G|}{2}\) is
 left and right balanced index stable, and so it is balanced index stable if \(G\) is abelian,
by \cite[Corollary 3.4(b)]{Hooshmand2023} and Theorem~\ref{BasicTheorem}(f).
\item Every finite (and co-finite) subset of an infinite group is balanced index stable (by Theorem~\ref{InfiniteSubindexEquation} and Theorem~\ref{BasicTheorem}(f)).
\end{itemize}
\end{remark}
Motivated by a result of M. Kabenyuk \cite[Lemma~4]{Kabenyuk} the idea is arisen that the inequalities for the right subindices
of subsets of a subgroup (\cite[Theorem~3.22]{Hooshmand2020}) maybe changed to identities (as I had some communications with him about it).
It is interesting to know that it completely agrees with the identity $|G:K| = |G:H|\,|H:K|$ where $K\leq H\leq G$.
\begin{theorem}\label{Subgroupsindex}
If \(A \subseteq H \leq G\), then
\begin{equation}\label{multiplicative subindex identity}
|G:A|^- = |G:H|\,|H:A|^- \; , \;  |G:A|^+=|G:H|\,|H:A|^+ .
\end{equation}
Hence, for every subset \(A\),
\[
|G:A|^- = |G:\langle A\rangle|\,|\langle A\rangle:A|^-
\leq |G:\langle A\rangle|\,|\langle A\rangle:A|^+ = |G:A|^+
\]
(analogously for the left subindices).
\end{theorem}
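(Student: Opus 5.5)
The plan is to establish a bijective correspondence between right subfactors of $G$ related to $A$ and families of right subfactors of $H$ related to $A$, indexed by cosets. Fix a right transversal $X$ of $H$ in $G$ with $1\in X$, so $G=\dot\bigcup_{x\in X}Hx$.

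For $\mathcal{B}\in\SubF_r(G:A)$, consider the pieces $\mathcal{B}_x:=\mathcal{B}\cap Hx$ and their translates $B_x:=\mathcal{B}_x x^{-1}=\mathcal{B}x^{-1}\cap H\subseteq H$. By (\ref{transversal subfactor}), each $B_x$ lies in $\SubF_r(H:A)$; in particular it is nonempty. Since $\mathcal{B}=\dot\bigcup_x B_x x$, we get
\[
|\mathcal{B}|=\sum_{x\in X}|B_x|,
\]
and each summand lies between $|H:A|^-$ and $|H:A|^+$. Summing over $|X|=|G:H|$ terms gives
\[
|G:H|\,|H:A|^-\le|\mathcal{B}|\le |G:H|\,|H:A|^+ .
\]
Taking the infimum, respectively the supremum, over $\mathcal{B}$ yields $|G:A|^-\ge |G:H||H:A|^-$ and $|G:A|^+\le|G:H||H:A|^+$. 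For the upper estimate with infinite cardinals one uses that a sum of $\kappa$ cardinals each $\le\lambda$ is $\le\kappa\lambda$.

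For the reverse inequalities I would use the converse construction. Given any family $(B_x)_{x\in X}$ with $B_x\in\SubF_r(H:A)$, set $\mathcal{B}=\bigcup_x B_xx$. The claim is $\mathcal{B}\in\SubF_r(G:A)$, checked via (\ref{SubfactorCondition}).

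First, $A^{-1}A\mathcal{B}\supseteq\bigcup_x (A^{-1}AB_x)x=\bigcup_x Hx=G$.

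Second, take $u=b x(b'x')^{-1}\in\mathcal{B}\mathcal{B}^{-1}\cap A^{-1}A$. Since $A^{-1}A\subseteq H$, the cosets must agree: $Hx=Hx'$, so $x=x'$. Then $u=bb'^{-1}\in B_xB_x^{-1}\cap A^{-1}A=\{1\}$.

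This generalizes (\ref{multiplicative subfactor}), which is the constant family. Now choose all $B_x$ of minimal size $|H:A|^-$; the infimum is attained, as noted after the definition. This gives $|G:A|^-\le|G:H||H:A|^-$. For the supremum, choose $B_x$ with $|B_x|$ approaching $|H:A|^+$. If the supremum is attained, take it in every coordinate. Otherwise $|H:A|^+$ is a limit of cardinals $|B_x|$, and one takes the constant family $B_xx$ with $|B|\ge\mu$ for each $\mu<|H:A|^+$. Then $|X|\mu$ is cofinal in $|G:H||H:A|^+$, using monotonicity of the cardinal product. Hence $|G:A|^+\ge|G:H||H:A|^+$.

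The main obstacle is the cardinal-arithmetic bookkeeping when $|H:A|^+$ is a supremum not attained, or when $|G:H|$ is infinite. In the latter case $|G:H|\cdot\mu=\max(|G:H|,\mu)$, and one must check that the supremum of these equals $|G:H|\cdot|H:A|^+$. This holds because $\sup_\mu\max(\kappa,\mu)=\max(\kappa,\sup\mu)$, and the same identity works in the finite case.

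The final displayed consequence follows by applying (\ref{multiplicative subindex identity}) with $H=\langle A\rangle$ and using $|\langle A\rangle:A|^-\le|\langle A\rangle:A|^+$. The left version is symmetric, using left transversals.
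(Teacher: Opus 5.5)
Your proof is correct and follows the paper's argument. The bounds $|G:A|^-\ge|G:H|\,|H:A|^-$ and $|G:A|^+\le|G:H|\,|H:A|^+$ come from the same coset decomposition $|\mathcal{B}|=\sum_{x\in X}|\mathcal{B}x^{-1}\cap H|$, with each piece in $\SubF_r(H:A)$. The only difference is in the reverse inequalities: the paper cites Theorem~3.22 of \cite{Hooshmand2020}, whereas you prove them directly by gluing subfactors of $H$ along the transversal (extending the inclusion $\{BX : B\in\SubF_r(H:A)\}\subseteq\SubF_r(G:A)$) and handling the non-attained supremum, which makes your version self-contained.
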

\begin{proof}
Let $A\subseteq H\leq G$ and $X$ be a right transversal of $H$ in $G$ (equivalently $G=H\cdot X$). If $\mathcal{B}$ is a right subfactor of $G$ relative to $A$
(i.e, $\mathcal{B}\in \SubF_r(G:A)$), then
$$
|\mathcal{B}|=\sum_{x\in X}|\mathcal{B}\cap Hx|=\sum_{x\in X}|\mathcal{B}x^{-1}\cap H|.
$$

Since $\mathcal{B}x^{-1}\cap H \in \SubF_r(H:A)$, for all $x\in X$ (by (\ref{intersection subfactor}),
and the identity $\SubF_r(G:A)g=\SubF_r(G:A)$) and
we conclude that $|H:A|^-\leq |\mathcal{B}x^{-1}\cap H|\leq |H:A|^+$ and so the above equality requires
$$
|X||H:A|^-\leq |\mathcal{B}|\leq |X||H:A|^+ .
$$
Therefore
$$
|G:H||H:A|^-\leq |G:A|^-\leq |G:A|^+\leq |G:H||H:A|^+ ,
$$
since the above inequalities hold for all $\mathcal{B}\in \SubF_r(G:A)$, and $|X|=|G:H|$.
Thus we arrive at (\ref{multiplicative subindex identity}) due to the inequalities
$|G:A|^-\leq |G:H||H:A|^-\leq|G:H||H:A|^+ \leq |G:A|^+$ from \cite[Theorem~3.22]{Hooshmand2020}.
\end{proof}

\begin{corollary}\label{SubindexsubgroupEquation}
Let \(A \subseteq H \leq G\).
\begin{enumerate}[(a)]
\item if $A$ is right index stable in \(G\), then
\[
|G:A|_r = |G:H|\,|H:A|^- = |G:H|\,|H:A|^+.
\]
\item if $A$ is right index stable in \(H\), then $A$ is right index stable in \(G\) and we have
\[
|G:A|_r = |G:H|\,|H:A|_r.
\]
But in general the converse is not true e.g., $|\mathbb{Q}:\mathbb{Z}|=\aleph_0$, $3=|\mathbb{Z}:\{1,4,9,\ldots\}|^-\neq |\mathbb{Z}:\{1,4,9,\ldots\}|^+=4$, and $|\mathbb{Q}:\{1,4,9,\ldots\}|=\aleph_0$(see Lemma~\ref{perfect square}).
\item
As a weak converse of (b), the right index stability of $A$ in $G$ implies right index stability of $A$ in $H$ if one of the following
conditions hold:\\
- $|G:H|$ is finite (i.e, $|G:H|<\aleph_0$);\\
- $|G:H|$ is infinite and $|G:H|\leq |H:A|^-$ (i.e, $\aleph_0\leq |G:H|\leq |H:A|^-$).
\item if $|G:H|\,|H:A|^- \neq |G:H|\,|H:A|^+$, then $A$ is non index stable in $G$.
\end{enumerate}
\end{corollary}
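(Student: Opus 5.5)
All four parts come from the identities (\ref{multiplicative subindex identity}) of Theorem~\ref{Subgroupsindex}, namely $|G:A|^-=|G:H|\,|H:A|^-$ and $|G:A|^+=|G:H|\,|H:A|^+$, together with elementary cardinal arithmetic. Note that $|H:A|^-\geq 1$ and $|G:H|\geq 1$, so every product involved is a nonzero cardinal.

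\textbf{(a)} If $|G:A|^-=|G:A|^+=|G:A|_r$, substituting both identities gives $|G:A|_r=|G:H|\,|H:A|^-=|G:H|\,|H:A|^+$ directly.

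\textbf{(b)} If $|H:A|^-=|H:A|^+=|H:A|_r$, the two right-hand sides of (\ref{multiplicative subindex identity}) coincide. Hence $|G:A|^-=|G:A|^+=|G:H|\,|H:A|_r$.

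For the counterexample to the converse I take $G=\mathbb{Q}$, $H=\mathbb{Z}$ and $A=\{1,4,9,\ldots\}$. The values $3$ and $4$ of the subindices of $A$ in $\mathbb{Z}$ are taken from Lemma~\ref{perfect square}. Then $|\mathbb{Q}:A|^\pm=\aleph_0\cdot 3=\aleph_0\cdot 4=\aleph_0$, which is consistent with Remark~\ref{FirstSubindexExample}.

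\textbf{(d)} This is the contrapositive of (a): if $A$ were right index stable in $G$, then (a) would force $|G:H|\,|H:A|^-=|G:H|\,|H:A|^+$, contrary to the hypothesis.

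\textbf{(c)} This is the only step needing care, because cardinal multiplication is cancellative only in restricted situations. From (a) we have $|G:H|\,|H:A|^-=|G:H|\,|H:A|^+$ and we must cancel $|G:H|$. I split into cases.

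If $\kappa=|G:H|$ is finite and both $|H:A|^\pm$ are finite, the equality is one between positive integers and cancels. If $|H:A|^+$ is infinite and $\kappa$ is finite, then $\kappa|H:A|^+=|H:A|^+$, so $\kappa|H:A|^-$ is infinite. This forces $|H:A|^-$ to be infinite, hence $\kappa|H:A|^-=|H:A|^-$, and the two subindices agree.

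If $\kappa$ is infinite and $\kappa\leq|H:A|^-\leq|H:A|^+$, then $\kappa\mu=\max(\kappa,\mu)=\mu$ for $\mu=|H:A|^\pm$ by standard cardinal arithmetic (assuming choice). So $|H:A|^-=|G:H|\,|H:A|^-=|G:H|\,|H:A|^+=|H:A|^+$.

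The main obstacle is making sure no case of (c) slips through, in particular the mixed finite/infinite case when $|G:H|$ is finite. I expect the argument above to handle it.
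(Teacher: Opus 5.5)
Your proposal is correct and follows essentially the paper's route: the paper states this corollary without a separate proof, as an immediate consequence of the identities $|G:A|^\pm=|G:H|\,|H:A|^\pm$ in Theorem~\ref{Subgroupsindex}, and you read off (a), (b) and (d) the same way. Your case analysis in (c), with the finite and infinite cases of $|G:H|$ and the mixed finite/infinite subcase, just makes explicit the cardinal cancellation that the paper leaves to the reader, and it is sound.
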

\begin{remark}\label{Retables}
Theorem~\ref{Subgroupsindex} and Corollary~\ref{SubindexsubgroupEquation} give us some strong results for the both theoretical and computational. For instance:
\begin{itemize}
\item if $A$ is right index stable in $\langle A\rangle$, then it is right index stable in $G$. This is very useful for
the computational aspect if $A$ is not a generating set.
\item for checking the index and $k$-index stability of a group it is enough to check subsets $A$ such that don't lie
in an index stable subgroup.
\end{itemize}
Also, they provide various cases for determining the subindices and related index stability .
They have the main role for characterization of finite index stabile groups, since
every group with a (right) non-index stable subgroup hazing finite index is not (right) index stable  \cite{Hooshmand2020}.
 Of course one can also use these important results for infinite groups.\\
We have designed two tables in the last section so that we can compare different cases and extract the necessary results.
For example, Table~\ref{table3} shows that if $|G:H|$ is infinite and $A$ is right index stable in $G$, then the equivalent conditions $|H:A|^->|G:H|$ and $|H:A|^+>|G:H|$
imply $A$ is also right index stable in $H$.
\end{remark}

\begin{example}\label{TableExample}
The group $G=\mathbb{Z}_2\times \mathbb{Z}_2\times\mathbb{Z}_2\times\mathbb{Z}_2$ is index stable (see \cite{HooshmandYousefian}). Hence, for every
subgroup $H$ and subsets $A$ of $H$ we have $|G:A|=|G:H||H:A|$, which means in this group the well-known identity $|G:K|=|G:H||H:K|$
($K\leq H$) is also valid for all $A\subseteq H$. \\
Now, let $G$ be an arbitrary group and $H\leq G$. If $A$ is a subset of $H$ with $|G:A|^+=|G:H|$, then
$A$ is right index stable in $G$ with the right index $|G:H|$. Moreover, if $|G:H|$ is finite, then ($A$ is also right index stable in $H$ and) $|H:A|_r=1$ and so
$A^{-1}A=H$ which means $A$ is a difference-generating set of $H$. But if $|G:H|$ is infinite, then $A$ may be non right index
stable in $H$ (only we have $|H:A|^{\pm} \leq |G:A|_r=|G:H|$). Because putting $G=\mathbb{Q}$, $H=\mathbb{Z}$, and \(A=\{1,4,9,\ldots\}\) we have
$|G:A|^+=|G:H|=\aleph_0$ and $A$ is not index stable in $H$.
\end{example}
\subsection{Relations of subindices to packing and covering numbers}
The \emph{(left) packing number} of $A$ in $G$ is defined as:

    \[
   pack_G(A) = \sup\bigl\{ |S| : S \subseteq G, \{xA\}_{x \in S} \text{ is disjoint} \bigr\}.
    \]

The notion of packing numbers is well-developed in the context of Polish groups, where descriptive set theory provides cardinal restrictions on the possible values (see \cite{Protasov2011}).\\

On the other hand, the concept of a covering number quantifies the minimum efficiency required to cover an entire group using translated copies of a specific subset.
The (left) covering number of $A$ in $G$, often denoted $\mathrm{cov}_G(A)$ or $\kappa_G(A)$, is defined by
$$
\mathrm{cov}_G(A) = \min\bigl\{ |X| : X \subseteq G, \, G = XA \bigr\}
$$
The covering number is a fundamental measure of the additive structure of the subset $A$ relative to $G$. \\
Now, we introduce an interesting and very importanr relation among the left upper and lower subindices, (left) packing number, and (left) covering number  of a subset.
\begin{lemma}\label{covering and packing}
For every subset $A$ of a group $G$ we have
\begin{equation}\label{covering and packing identity}
\mathrm{cov}_G(AA^{-1})\leq |G : A|_-\leq |G : A|_+=\mathrm{pack}_G(A)
\end{equation}
\end{lemma}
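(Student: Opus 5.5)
The plan is to translate everything into the left-handed version of the subfactor machinery and read off the three relations directly. Assume \(A\neq\emptyset\); if \(A=\emptyset\) then \(AA^{-1}=\emptyset\) and no \(X\) satisfies \(G=X(AA^{-1})\), so the left-hand side is degenerate and the statement is meant for nonempty \(A\). The middle inequality \(|G:A|_-\leq|G:A|_+\) is immediate from the definitions, since the infimum of a nonempty family of cardinals is at most its supremum, and \(\SubF_\ell(G:A)\neq\emptyset\) is guaranteed as recalled in the preliminaries.

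\emph{Left directness equals packing.} For \(B\subseteq G\), the product \(BA\) is direct iff \(ba=b'a'\) forces \(b=b'\). Since \(ba=b'a'\) is equivalent to \(b'^{-1}b=a'a^{-1}\), this holds iff \(B^{-1}B\cap AA^{-1}=\{1\}\), which is the mirror of the criterion \(A^{-1}A\cap BB^{-1}=\{1\}\) quoted in the preliminaries. Equivalently, it holds iff the family \(\{bA\}_{b\in B}\) is pairwise disjoint. Hence the sets \(B\) with \(BA=B\cdot A\) are exactly the index sets \(S\) appearing in the definition of \(\mathrm{pack}_G(A)\).

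\emph{Upper index equals packing.} By the left analogue of the remark \(|G:A|^+=\sup\{|B|: AB=A\cdot B\}\), we have \(|G:A|_+=\sup\{|B|: BA=B\cdot A\}\), and this is \(\mathrm{pack}_G(A)\) by the previous step. To justify the left analogue directly: any \(B\) with \(BA\) direct extends, by Zorn's lemma, to an inclusion-maximal such set. This works because the disjointness of \(\{bA\}_{b\in B}\) is a finitary condition, so it is preserved under unions of chains. The maximal extension is a left subfactor of cardinality \(\geq |B|\), so the supremum over all such \(B\) equals the supremum over left subfactors.

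\emph{Lower bound by the covering number.} This uses the mirror of condition (\ref{SubfactorCondition}): every left subfactor \(B\) related to \(A\) satisfies \(B(AA^{-1})=G\). I would prove this directly, since it is the only step needing care with sides.

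Let \(g\in G\). If \(g\in B\), then \(g=g\cdot 1\in BAA^{-1}\), using \(1\in AA^{-1}\) because \(A\neq\emptyset\). If \(g\notin B\), maximality means \((B\cup\{g\})A\) is not direct, so \(gA\cap bA\neq\emptyset\) for some \(b\in B\). Then \(ga=ba'\) for some \(a,a'\in A\), and so \(g=ba'a^{-1}\in BAA^{-1}\).

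Thus every \(B\in\SubF_\ell(G:A)\) is an admissible \(X\) in the definition of \(\mathrm{cov}_G(AA^{-1})\). This gives \(\mathrm{cov}_G(AA^{-1})\leq|B|\) for all such \(B\), hence \(\mathrm{cov}_G(AA^{-1})\leq|G:A|_-\).

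The main obstacle is only bookkeeping of sides, namely getting \(AA^{-1}\) rather than \(A^{-1}A\) and placing \(B\) on the left. The direct argument in the last step avoids relying on an unstated mirrored form of (\ref{SubfactorCondition}).
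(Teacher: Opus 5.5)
Your proof is correct and follows the same route as the paper. You identify $|G:A|_+$ with the supremum over all $B$ with $BA=B\cdot A$ (i.e. the packing number) by extending each such $B$ to a left subfactor, and you bound $|G:A|_-$ below because every left subfactor satisfies $BAA^{-1}=G$. The only difference is that you prove directly, via Zorn's lemma and the maximality argument, the two facts the paper cites from earlier work, and you flag the degenerate case $A=\emptyset$, which the paper does not mention.
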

\begin{proof}
First, we have
\[
|G : A|_+
= \sup\{|B| : B \in \SubF_\ell(A)\}
= \sup\{|B| : B \subseteq G, \, BA = B \cdot A, \, BAA^{-1}=G\}
\]
\[
= \sup\{|B| : B \subseteq G, \, BA = B \cdot A\}
= \sup\{|B| : B \subseteq G, \, \{bA \}_{b \in B} \text{ is disjoint}\}
= \mathrm{pack}_G(A).
\]
The third equality is obtained from the fact that if $BA = B \cdot A$, then $B$ lies in a left
subfactor of $G$ relative to $A$ (see \cite[Corollary~3.3]{Hooshmand2020}).
Also,
$$
\mathrm{cov}_G(AA^{-1})=\min\bigl\{ |X| : X \subseteq G, \, G = XAA^{-1} \bigr\}\leq
\min\bigl\{ |X| : X \subseteq G, \, G = XAA^{-1} , \, XA=X\cdot A \bigr\}=|G : A|_-.
$$
Thus we obtain (\ref{covering and packing}), since always  $|G : A|_-\leq |G : A|_+$.
\end{proof}

This connection helps the two topics of subindices and packing numbers to improve each other (or maybe for the covering numbers).
As some instances, we apply some results of many papers on small subsets of groups and related packing numbers to
answer some questions of our topic, in the next part.
Also, in contrast, the results obtained in the pervious articles and the continuation of this paper can contribute to the topic
packing numbers and related topics. For instance, the RSFA algorithm for infinite groups (in Subsection 2.2) may be useful
for the study of small and large subsets, and related packing numbers.
For example, we obtain that the packing number
of perfect square numbers (resp. positive perfect square numbers) in $\mathbb{Z}$ is 2 (resp. 4, see Example~\ref{square}).
\subsection{Answering to some open questions and unsolved problems} There are several basic and important problems, questions,
conjectures, and research project about the topic of subindices, subfactors, and index stability of groups \cite{Hooshmand2020, Hooshmand2023}.
Many of them have been answered at the end of \cite{HooshmandYousefian} (In fact, some of them had simple answers that had not been given enough attention before).
Now, we consider some of them here.\\
\cite[\textbf{Problems and questions III}]{Hooshmand2020}. \\
{\bf (a)} Characterize all $n$ such that $S_n$ ((respectively, $A_n$, $D_{2n}$, etc.) is index stable. \\
\textbf{Solution.} $S_n$ and $A_n$; $n=1,2,3$, $D_{2n}$; $n=1,2,3,4$, $\mathbb{Z}_{n}$; $n=1,2,3,4,5,7$.\\
Indeed, it is proved that the class of all (right/left) finite index stable groups consists exactly of all groups of order $\leq 9$ except $C_6$, $C_8$, $C_9$, together with $C_2\times C_2\times C_2\times C_2$ (see \cite[Theorem~3.1]{HooshmandYousefian} ).\\
{\bf (b)} Characterize or classify all index stable subgroups of $(\mathbb{R},+)$ and $(\mathbb{C},+)$.
Especially, is rational (resp. real, complex) numbers group index stable?\\
\textbf{Solution.} Only the identity subgroup (in the sequel, we show that no infinite group is index stable).\\
{\bf (c)} Give some subsets of some (finite, infinite) groups such that all its sub-indexes are different (i.e.,
it has no any type of index stability). \\
It is still \textbf{open}. Also see \cite[Section~4]{HooshmandYousefian}.\\
{\bf (d)} Give finite and infinite examples of a group that is right (resp. left) but not left (resp. right) index stable.\\
\textbf{Nothing.} See \cite{Hooshmand2023} (after Definition 3.9).\\
Also, give finite and infinite examples of groups that are both left and right index
stable but not index stable. \\
\textbf{Nothing} for finite groups. But, for infinite groups it is still \textbf{open}.\\
{\bf (e)} Let $\alpha\geq \aleph_0$ be a given cardinal number. Prove or disprove: \\
\textbf{(e1)} There exist an index stable
(resp. non-index stable) abelian and also non-abelian group of order $\alpha$. \\
\textbf{Solution.} There is no any infinite index stable group. In \cite{BanakhLyaskovska2006, BanakhLyaskovska2008}, T. Banakh and N. Lyaskovska
proved that each infinite (abelian and non-abelian) group \(G\) contains a subset \(A\) such that,
for every positive integer \(n\), there are \(n\) pairwise disjoint (left, right) translation copies of \(A\) in \(G\) but there is no infinitely many such disjoint copies.
Therefore, $\{|B|: B\subseteq G, BA=B\cdot A\}=\mathbb{Z}_+$ (analogously for the right case). But
\[
|G : A|_+= \mathrm{pack}_G(A)
= \sup\{|B| : B \subseteq G, \, BA = B \cdot A\}
= \sup \mathbb{Z}_+=\aleph_0.
\]
 On the other hand, putting $S=\{|B| : B \in \SubF_\ell(A)\}$ we have
 $$
 \emptyset\neq S\subseteq \{|B|: B\subseteq G, BA=B\cdot A\}=\mathbb{Z}_+.
  $$
 Thus, $|G : A|_-=\min S<\aleph_0$. This means $|G : A|_+\neq |G : A|_-$ and so $A$ is not left index stable. Hence
 $G$ is neither left nor right index stable, due to the part (d). In particular, it is not index stable.
\begin{remark}

The above result also give negative answer to the question \cite[21.58]{KourovkaNotebook} which asks:
is there an infinite group $G$ such that every subset $A\subseteq G$ satisfies the
property that all the maximal subsets $B$ for which the product $AB$ is direct
have the same cardinality?\\
Notably, \cite{Kabenyuk2} provided an alternative method for answering this question.
Also, we conclude that there are only 14 (right/left) index stable groups among all groups that all of them are finite (as mentioned in the above).
\end{remark}
 \textbf{(e2)} There exist an abelian and also a non-abelian group $G$ with a subset
$A$ such that $|G:A|^-=2$ and $|G:A|^+=\alpha$. Moreover, there are two finite and infinite such
groups if $\alpha$ is finite.\\
It is still \textbf{open}.\\
{\bf (f)} Is there any group $G$ with a subset $A$ satisfying one (both) of
the following properties?\\
(1) $|G:A|^+\notin\{|B|: B\in \SubF_r(A)\} \qquad$  (2) $|G:A|_+\notin\{|B|: B\in \SubF_\ell(A)\}$.\\
\textbf{Answer.} \textbf{Yes} (see (e1)).\\
{\bf (g)} Let $G$ be a group and $A\subseteq G$ with $Dif_\ell^\infty(A)=G$. Prove or disprove:\\
(g1) If $dl^\infty_\ell(A)$ is finite then $|G:A|^+$ or $|G:A|^-$  is also finite; \\
It is still \textbf{open}.\\
(g2) If $dl^\infty_\ell(A)=\infty$ and $G$ is infinitely countable then $|G:A|_r=\aleph_0$,
and visa versa;  \\
It is still \textbf{open}.\\
(g3)  The statements (g1) and (g2) are true if $\langle A \rangle_\ell^{(N)}=G$
, where $N=dl^\infty_\ell(A)$ (for the case that either (g1) or (g2) is false).\\
It is still \textbf{open}.\\
{\bf (h)} If $Dif_\ell^2(A)=G$ and $A$ is right index stable then $|G:A|_r=2$,
and visa versa (analogously for the left and two-sided cases).\\
\textbf{Disproved.} See \cite[Section~4]{HooshmandYousefian}. \\
{\bf (i)} If $A\subseteq H\leq G$ and $A$ is index stable in $G$, then it is so
in $H$ (and visa versa).\\
\textbf{Solution.} This is \textbf{false in general} (it is true if $|G:H|<\aleph_0$,
or $\aleph_0\leq |G:H|\leq |H:A|^-$), but the \textbf{converse is true} by Corollary \ref{SubindexsubgroupEquation}.\\
The above results also enable us to solve some related problems. For example:\\
\cite[\textbf{Problems and questions III}]{Hooshmand2020}. Are all products of $\mathbb{Z}_{2}$ index stable?
What about $\mathbb{Z}_{3}$ , $\mathbb{Z}_{4}$ , $\mathbb{Z}_{5}$ and $\mathbb{Z}_{7}$ ? \\
\textbf{Answer.} No. Indeed, the powers of $\mathbb{Z}_{n}$ ($n>1$) that are not index stable are as follows: $\mathbb{Z}_{2}^m$; $m\geq5$,
$\mathbb{Z}_{3}^m$; $m\geq3$, $\mathbb{Z}_{4}^m$, $\mathbb{Z}_{5}^m$, and $\mathbb{Z}_{7}^m$; $m\geq2$,
and others $\mathbb{Z}_{n}^m$; $m\geq1$.
\section{More sub-index properties for infinite groups}
In \cite{Hooshmand2023, HooshmandYousefian} we studied subindex and subfactor properties in finite groups. In the previous section we consider
some basic and important properties in arbitrary group. For infinite groups, we observed that every finite subset \(A\) is index stable
and \(|G:A|=|G|\). For infinite subsets \(A\)
we have \(|Dif_\ell(A)|=|Dif_r(A)|=|A|\) (because \(|A|\leq|Dif(A)|\leq |A^{-1}||A|=|A|^2=|A|\)) and the following important fact shows that the only challenging case for index stability of subsets is \(|A^c|=|A|\) (although for finite groups is \(|A^c|\geq |A|\), where  \(A^c=G\setminus A\)).
Hence, we call a subset \(A\) of a given group \(G\) \emph{``cardinally balanced''} if \(|A^c|=|A|\). If \(G\) is an infinite group, then \(A\) is cardinally balance if and only if \(|A^c|=|A|=|G|\).
\begin{theorem}\label{InfiniteSubindexEquation} Let \(G\) be an infinite group.
\begin{enumerate}[a)]
\item For every nonempty subset \(A\) we have
\begin{equation}\label{InfiniteSubindexidentity}
|G|=|G:A|^-|A|=|G:A|^+|A|=|G:A|_-|A|=|G:A|_+|A|
\end{equation}
\item All subsets \(A\) with \(|A^c|\neq |A|\) of \(G\) are index stable.

Moreover,
we have \(|G:A|=|G|\) if \(|A|<|G|\), and \(|G:A|=1\) if \(|A|=|G|>|A^c|\).
\end{enumerate}
Therefore, the (left, right, and two-sided)
non index stable subsets of infinite groups (if exist) are among the cardinally balanced subsets.
\end{theorem}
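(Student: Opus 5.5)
The plan is to derive everything from the basic inequality (\ref{Maininequality}) of Theorem~\ref{BasicTheorem}, combined with infinite cardinal arithmetic. For part (a), fix a nonempty $A\subseteq G$ and any $B\in\SubF_r(G:A)$. We have $|B|\le |G:A|^+$, and (\ref{Maininequality}) gives $|G:A|^+|A|\le |G|$. In the other direction, (\ref{SubfactorCondition}) gives $G=A^{-1}AB$, so
\[
|G|\le |A^{-1}A|\,|B|\le |A|^2|B| .
\]
We then split into two cases according to the size of $A$.

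If $A$ is finite, then $|G|\le |A|^2|B|$ with $|G|$ infinite forces $|B|$ to be infinite. Hence $|B|\ge |G|$, so $|B|=|G|$. Since $|A|$ is finite and nonzero, $|B||A|=|G|$.

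If $A$ is infinite, then $|A^{-1}A|=|A|$, as noted before the theorem. So $|G|\le |A|\,|B|=\max(|A|,|B|)$, while $|A||B|\le |G|$ from the inequality above. Thus $|B||A|=|G|$ for every $B\in\SubF_r(G:A)$.

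Taking the minimum and the supremum over $B$ gives $|G:A|^-|A|=|G:A|^+|A|=|G|$. For the supremum, note that $\sup_B |B|\cdot|A|$ equals $\sup_B(|B||A|)$ for infinite cardinals, or one simply squeezes it between $|G:A|^-|A|$ and $|G|$. The left subindices are handled by the symmetric argument, using $G=BAA^{-1}$ and $|AA^{-1}|\le|A|^2$.

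For part (b), first suppose $|A|<|G|$. If $A=\emptyset$, the only subfactor is $G$ itself, so all subindices equal $|G|$. Otherwise, (\ref{InfiniteSubindexidentity}) gives $\max(|A|,|G:A|^{\pm})=|G|$ (or $|G:A|^\pm=|G|$ when $A$ is finite). Since $|A|<|G|$, this forces $|G:A|^\pm=|G|$, and likewise for the left subindices. So $A$ is index stable with $|G:A|=|G|$.

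Now suppose $|A|=|G|>|A^c|$. Then $|A|>|G\setminus A|$, and Theorem~\ref{BasicTheorem}(f) gives $|G:A|=1$.

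These two cases exhaust $|A^c|\ne|A|$. If $|A|<|G|$, we are in the first case. If $|A|=|G|$, then $|A^c|\ne|A|$ means $|A^c|<|G|$, which is the second case. Finally, when $|A|>|A^c|$ with $|A|<|G|$, we would have $|G|=|A|+|A^c|<|G|$, which is impossible. The concluding sentence of the theorem then follows at once, since any non-index-stable subset must satisfy $|A^c|=|A|$.

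The only delicate step is the cardinal bookkeeping in (a): passing from the individual identities $|B||A|=|G|$ to the statement about the infimum and supremum, and treating the finite and infinite cases of $A$ separately. The rest is routine.
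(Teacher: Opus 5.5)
Your proof is correct and follows essentially the same route as the paper. Both arguments derive $|B||A|=|G|$ for every right subfactor $B$ from $G=\Dif(A)B$, pass to the infimum and supremum, and obtain (b) from (a) together with Theorem~\ref{BasicTheorem}(f). Your version is slightly more careful in two places: you bound $|G|\le|A^{-1}A|\,|B|$ and $|A||B|\le|G|$ separately instead of asserting $|\Dif(A)B|=|\Dif(A)||B|$ outright, and you treat $A=\emptyset$ explicitly.
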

\begin{proof}
Let \(G\) be an infinite group, \(\emptyset\neq A\subseteq G\), and \(Dif(A)B=G\). Then,
\[
|G|=|Dif(A)B|=|Dif(A)||B|=|A||B|.
\]
For if  \(A\) is infinite then \(|\Dif(A)|=|A|\)
and if \(A\) is finite then \(\Dif(A)\) is also finite.
In both cases we have \(|Dif(A)||B|=|A||B|\).
Therefore \(|G|=|A||B|\), for all \(B\in \SubF_r(A)\), and so \(|G|=|G:A|^-|A|=|G:A|^+|A|\). Similarly, we have
\(|G|=|G:A|_-|A|=|G:A|_+|A|\).\\
Now, the identity (\ref{InfiniteSubindexidentity}) together with Theorem~\ref{BasicTheorem}(f)  imply (b).
\end{proof}
The set of irrational (\(\mathbb{Q}^c\)), transcendental, and algebraic numbers are index stable (in \(\mathbb{R}\))
since they are not cardinally balanced.
\begin{corollary}\label{finiteSunindexCriteria1}
Let \(G\) be an infinite group and \(A\) a given subset of \(G\) . Then,
if \(A\) (resp. \(\Dif(A)\)) is co-finite then it is index stable and \(|G:A|=1\).
(resp. \(|G:A|^\pm\) are finite).
\end{corollary}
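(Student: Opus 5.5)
For the first assertion, the plan is to use Theorem~\ref{BasicTheorem}(f) and Theorem~\ref{InfiniteSubindexEquation}(b). If \(A\) is co-finite in the infinite group \(G\), then \(A^c\) is finite and \(A\) is infinite. Hence \(|A|=|G|>|A^c|\), so \(A\) is not cardinally balanced. Theorem~\ref{InfiniteSubindexEquation}(b) then says \(A\) is index stable with \(|G:A|=1\). One can also argue directly through Theorem~\ref{BasicTheorem}(f), since \(|A|>|G\setminus A|\) gives \(\Dif(A)=\Dif_r(A)=G\) and hence \(|G:A|=1\). Either route settles the first case immediately.

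For the second assertion, assume \(\Dif(A)\) is co-finite, so \(\mathcal{C}(A)=G\setminus \Dif(A)\) is finite. The key step is the second inequality in (\ref{Maininequality}), namely \(|G:A|^+\leq|\mathcal{C}(A)|+1<\aleph_0\). Since \(|G:A|^-\leq|G:A|^+\), both right subindices are finite. I will justify this inequality directly to keep the proof self-contained. By (\ref{maximal in C(A)}), every \(B\in\SubF^1_r(G:A)\) lies in \(\mathcal{C}^1(A)\). Moreover, every right subfactor can be translated, \(B\mapsto Bb^{-1}\), so that it contains \(1\) while keeping the same cardinality.

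For the left subindices \(|G:A|_\pm\), I will argue symmetrically. Co-finiteness of \(\Dif(A)=A^{-1}A\) gives \(\Dif_r(A^{-1})=A^{-1}A\) co-finite. Left subfactors of \(A\) are exactly the inverses of right subfactors of \(A^{-1}\), so \(|G:A|_\pm=|G:A^{-1}|^\mp\). The statement as written refers only to \(\Dif=\Dif_\ell\) and the right subindices \(|G:A|^\pm\), so I will restrict the claim to those and not assert more.

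The main obstacle I expect is interpretive rather than technical. The statement does not claim index stability in the second case, only finiteness, and I will not try to prove more. A sanity check supports this: the positive squares in \(\mathbb{Z}\) have co-finite difference set, yet subindices \(3\neq 4\).
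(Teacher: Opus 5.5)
Your argument is correct and matches how the paper gets this corollary, which it states without a separate proof right after Theorem~\ref{InfiniteSubindexEquation}. The co-finite case is Theorem~\ref{InfiniteSubindexEquation}(b), or equivalently Theorem~\ref{BasicTheorem}(f). The co-finite-difference case is the bound $|G:A|^+\leq|\mathcal{C}(A)|+1$ from (\ref{Maininequality}), and your translation $B\mapsto Bb^{-1}$ into $\mathcal{C}^1(A)$ justifies it correctly.

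Two of your side remarks are wrong, though nothing in the proof depends on them.

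First, inversion gives $|G:A|_\pm=|G:A^{-1}|^\pm$, not $\mp$. Moreover, the right subindices of $A^{-1}$ are governed by $\Dif_\ell(A^{-1})=AA^{-1}$, not by $A^{-1}A$. So co-finiteness of $\Dif(A)$ gives no control over the left subindices when $G$ is nonabelian.

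Second, your sanity check fails. The positive squares do not have a co-finite difference set, since $\mathcal{C}(\{1,4,9,\ldots\})=(4\mathbb{Z}+2)\cup\{\pm1,\pm4\}$ by Lemma~\ref{perfect square}. A valid example is any $A\subseteq\mathbb{Z}$ with $\mathcal{C}(A)=\{\pm1,\pm2,\pm5\}$, which exists by Proposition~\ref{codif}. For this $A$, $\{0,5\}\in\SubF_r(\mathbb{Z}:A)$ while $A+\{0,1,2\}$ is direct. Hence $|\mathbb{Z}:A|^-\leq 2<3\leq|\mathbb{Z}:A|^+$.
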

Regarding part (b) the above corollary, one can always find a subset of every group that its difference is co-finite.
\begin{proposition}\label{codif}
Let $G$ be an infinite group and $F$ be a subset of $G$ such that $1\not\in F=F^{-1}$ and $|F|<|G|$.
Then there exists a subset $A$ of $G$ such that $\mathcal{C}(A)=F$ (i.e.,  $\Dif(A)=G\setminus F$).
\end{proposition}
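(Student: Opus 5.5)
The plan is a transfinite greedy construction. I would build $A$ as an increasing union of small sets, adding at each step a pair of elements whose difference is a prescribed target element of $G\setminus F$. Throughout, the left difference set $\Dif(A)=A^{-1}A$ must never meet $F$.

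Put $\kappa=|G|$, which is infinite, and well-order $G\setminus(F\cup\{1\})$ as $\{g_\alpha:\alpha<\kappa\}$. This is possible because $|F|<\kappa$ forces $|G\setminus F|=\kappa$; repetitions are harmless. I would construct sets $A_0\subseteq A_1\subseteq\cdots\subseteq A_\alpha\subseteq\cdots$ for $\alpha<\kappa$ with three properties:
\begin{itemize}
\item[(i)] $A_\alpha^{-1}A_\alpha\cap F=\emptyset$;
\item[(ii)] $|A_\alpha|\leq 2|\alpha|+2<\kappa$ when $\alpha$ is infinite, and $A_\alpha$ is finite when $\alpha$ is finite;
\item[(iii)] $g_\beta\in A_{\beta+1}^{-1}A_{\beta+1}$ for every $\beta<\alpha$.
\end{itemize}
Start with $A_0=\{1\}$, which is fine since $1\notin F$. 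At limit stages take unions. Condition (i) is preserved because any two elements of a union of a chain already lie in some $A_\beta$, and the cardinality bound passes to the union.

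The successor step is the heart of the argument. Given $A=A_\alpha$ and $g=g_\alpha$, I look for $x\in G$ and set $A_{\alpha+1}=A\cup\{x,xg\}$. The new differences are of four kinds.
\begin{itemize}
\item $x^{-1}x=1$ and $(xg)^{-1}(xg)=1$, which are not in $F$.
\item $x^{-1}(xg)=g$ and its inverse $g^{-1}$. Neither lies in $F$, because $g\notin F$ and $F=F^{-1}$.
\item $a^{-1}x$ and $x^{-1}a$ for $a\in A$. Since $F=F^{-1}$, both avoid $F$ exactly when $x\notin aF$.
\item $a^{-1}xg$ and $(xg)^{-1}a$ for $a\in A$. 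These both avoid $F$ exactly when $x\notin aFg^{-1}$.
\end{itemize}
So it suffices to choose $x$ outside the forbidden set $AF\cup AFg^{-1}$. This set has cardinality at most $2|A||F|$. Since $|A|<\kappa$, $|F|<\kappa$ and $\kappa$ is infinite, this product is $<\kappa$: it is either finite or equal to $\max(|A|,|F|)$. Hence such an $x$ exists. Then (i) holds for $A_{\alpha+1}$, and $g_\alpha\in A_{\alpha+1}^{-1}A_{\alpha+1}$.

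Finally put $A=\bigcup_{\alpha<\kappa}A_\alpha$. By (i) and the chain argument, $A^{-1}A\cap F=\emptyset$. Also $A^{-1}A$ contains $1$ and every $g_\alpha$. Hence $\Dif(A)=G\setminus F$, that is, $\mathcal{C}(A)=F$.

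The only genuine obstacle is the cardinality bookkeeping in the successor step. One must check that the forbidden set stays of size $<|G|$ at every stage, and this is exactly where $|F|<|G|$ and the bound $|A_\alpha|<\kappa$ for $\alpha<\kappa$ are used. The symmetry $F=F^{-1}$ is what collapses the four families of conditions into two translates of $F$. The hypothesis $1\notin F$ is necessary, since $1\in\Dif(A)$ always.
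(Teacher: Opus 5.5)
Your proof is correct and is essentially the paper's argument. The paper also runs a transfinite induction along an enumeration $\{g_\alpha:\alpha<\kappa\}$ of $G\setminus F$, keeping the same invariant $A_\alpha\cap A_\alpha F=\emptyset$ (equivalently $A_\alpha^{-1}A_\alpha\cap F=\emptyset$). At stage $\alpha$ it adds the pair $\{a_\alpha,a_\alpha g_\alpha^{-1}\}$, which is your $\{x,xg\}$ with $x=a_\alpha g_\alpha^{-1}$, choosing $a_\alpha\notin A_\alpha F\cup A_\alpha F g_\alpha$; your version differs only in making the limit stages and the cardinality bound on the forbidden set explicit.
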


\begin{proof}
Let $\kappa=|G|$ and $\{g_\alpha:\alpha<\kappa\}$ be an injective enumeration of the set $G\setminus F$.
We shall proceed by transfinite induction on $\alpha$.
Let $\alpha<\kappa$ be an ordinal.
Put $A_\alpha=\{a_\beta,a_\beta g_\beta^{-1}:\beta<\alpha\}.$
The inductive construction ensures that $A_\alpha\cap A_\alpha F=\varnothing$.
Since $|F|,|A_\alpha|<|G|$, we can pick an element
$a_\alpha\in G\setminus\left(A_\alpha F\cup A_\alpha F g_\alpha\right)$.
Then $$(A_\alpha\cup \{a_\alpha,a_\alpha g_\alpha^{-1}\})^{-1}
(A_\alpha\cup \{a_\alpha,a_\alpha g_\alpha^{-1}\})\cap F=\varnothing,$$ and
$g_\alpha=(a_\alpha g_\alpha^{-1})^{-1}a_\alpha$.
Finally, put $A=\{a_\alpha,a_\alpha g_\alpha^{-1}:\alpha<\kappa\}$.
\end{proof}

\subsection{Subindices of subsets with non \underline{syndetic} differences.}

In group theory and additive combinatorics, syndetic sets are important examples of ``large'' sets. A subset $A$ of an infinite group $G$ is called \emph{right syndetic} if there is a finite set $F \subseteq G$ such that $AF = G$. This means that a finite number of right-translates of $A$ cover the whole group (so $A$ must be infinite).

Syndetic sets relate to other concepts like thickness and piecewise syndeticity (see \cite{Syndetic}). These ideas are essential for proving key results, such as van der Waerden's and Szemer\'edi's theorems, using the Furstenberg correspondence principle~\cite{Furstenberg book, Furstenberg paper, GlasscockLe2024}. Syndetic sets also connect different fields: they appear as return-time sets in ergodic theory and as members of minimal ideals in the Stone--\v{C}ech compactification $\beta G$~\cite{HindmanStrauss1998}.

For the integers, a set is syndetic if and only if it has bounded gaps. This means there is a number $N \in \mathbb{N}$ such that every interval of length $N$ contains an element of the set (we will discuss this in Section 3).

Finally, adding or removing a finite number of elements does not change whether a set is syndetic
(see \cite{TaoVu}). For any subset $S \subseteq G$ and finite set $F \subseteq G$, we have:
\[
    S \text{ is right syndetic }
    \iff S \cup F \text{ is right syndetic }
    \iff S \setminus F \text{ is right syndetic}.
\]



The next corollary is an immediate result of Theorem \ref{InfiniteSubindexEquation}, \ref{Maininequality}, and \ref{SubindexsubgroupEquation}
(also see \cite[Theorem 3.8]{Hooshmand2020}).
\begin{corollary}\label{InfiniteSunindexCriteria2}
Let \(G\) be an infinite group and \(A\) a given subset of \(G\) . Then
 if one of the following conditions holds
then \(|G:A|^\pm\) are infinite \( \)(i.e., \(|G:A|^-\geq \aleph_0)\):
\begin{enumerate}[a)]
\item \(\Dif(A)\) is not right syndetic;
\item  the equation \(\Dif_\ell(A)\cap \Dif_r(X)=\{1\}\) has no any finite maximal solution in \(2^G\);
\item the system of equations
\[
\left\{\begin{array}{l} Dif(A)X=G    \\
\Dif_\ell(A)\cap \Dif_r(X)=\{1\}
\end{array}
\right.
\]
has no any finite solution in \(2^G\).
\end{enumerate}
\end{corollary}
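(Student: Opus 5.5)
The plan is to reduce everything to one observation. Suppose $|G:A|^-$ were finite. Since the infimum in the definition of $|G:A|^-$ is attained, there is then a right subfactor $B\in\SubF_r(G:A)$ with $B$ finite. By (\ref{SubfactorCondition}), such a $B$ satisfies
\[
\Dif(A)B=A^{-1}AB=G \quad\text{and}\quad A^{-1}A\cap BB^{-1}=\{1\}.
\]
Each of (a), (b), (c) will contradict the existence of this finite $B$. After that, $|G:A|^+\geq |G:A|^-\geq\aleph_0$ gives the conclusion for both subindices.

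Part (a) is immediate from the first relation. $\Dif(A)B=G$ with $B$ finite says exactly that $\Dif(A)$ is right syndetic, witnessed by $F=B$.

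For (c), the pair of relations above says precisely that $X=B$ is a finite solution of the displayed system. Here $\Dif_r(X)=XX^{-1}$, so $\Dif_\ell(A)\cap\Dif_r(X)=A^{-1}A\cap BB^{-1}$. Hence if the system has no finite solution, no finite right subfactor exists.

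For (b), I would use the characterization (\ref{SubfactorCondition}) and the definition: right subfactors related to $A$ are exactly the inclusion-maximal $X$ with $AX=A\cdot X$. The introduction notes that for nonempty sets this is equivalent to $A^{-1}A\cap XX^{-1}=\{1\}$. So $\SubF_r(G:A)$ coincides with the family of maximal solutions in $2^G$ of $\Dif_\ell(A)\cap\Dif_r(X)=\{1\}$, possibly after discarding the empty set, which is not maximal when $G\neq\{1\}$. Then a finite $B\in\SubF_r(G:A)$ is a finite maximal solution, contradicting the hypothesis.

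The only delicate point, and the main obstacle, is checking this identification of maximal solutions with subfactors. One must handle $A=\emptyset$ separately. In that case $\Dif(A)=\emptyset$, so the system in (c) has no solution at all, and also the only maximal $X$ is $G$ itself, which is infinite. Consistently, Theorem~\ref{InfiniteSubindexEquation} then gives $|G:\emptyset|^\pm=|G|$, which is infinite. With that case dispatched, the cited results (Theorem~\ref{InfiniteSubindexEquation} and (\ref{Maininequality})) are only needed to guarantee infinite subindices are at least $\aleph_0$, which is trivial.
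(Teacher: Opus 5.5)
Your proof is correct and is essentially the paper's argument. The paper only declares the corollary immediate (pointing to \cite[Theorem 3.8]{Hooshmand2020}), and the content behind that remark is exactly your observation: by (\ref{SubfactorCondition}), right subfactors are precisely the maximal solutions of $\Dif_\ell(A)\cap\Dif_r(X)=\{1\}$, equivalently the solutions of the displayed system, so a finite value of the attained minimum $|G:A|^-$ would give a finite right subfactor that witnesses right syndeticity of $\Dif(A)$ and is a finite (maximal) solution, with the case $A=\emptyset$ (where $\SubF_r(G:\emptyset)=\{G\}$) handled separately as you do.
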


\begin{example}\label{InfiniteSunindexExample}
Every bounded subset of rational numbers is index stable (in \(\mathbb{Q}\)) with index \(\aleph_0\)
by Corollary \ref{InfiniteSunindexCriteria2}.
\end{example}
Given the significance of the syndeticity of $\operatorname{Dif}(A)$, the following lemma establishes an analogue
 of the preceding equivalences specifically for difference sets. This result is instrumental for determining the
 subindices of subsets in infinite groups, particularly in the countable case.
\begin{lemma}\label{DifferenceSyndeticProperty}
Let \(G\) be an infinite group, \(g\in G\), \(A\subseteq G\), and \(F\) a finite subset of \(G\). Then
\begin{enumerate}[a)]
\item if  \(g^{-1}Ag\subseteq A\), then
\begin{equation}\label{ab}
\Dif(A\cup \{g\}) \text{ is right syndetic }
\;\Leftrightarrow\; \Dif(A) \text{ is right syndetic }
\;\Leftrightarrow\; \Dif(A\setminus \{g\}) \text{ is right syndetic}.
\end{equation}
\item if  \(f^{-1}Af\subseteq A\) for every \(f\in F\), and if all elements of \(F\)
commute with each other, then
\begin{equation}\label{abc}
\Dif(A\cup F) \text{ is right syndetic }
\;\Leftrightarrow\; \Dif(A) \text{ is right syndetic }
\;\Leftrightarrow\; \Dif(A\setminus F) \text{ is right syndetic}.
\end{equation}
\end{enumerate}
\end{lemma}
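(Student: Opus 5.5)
The plan is to use two elementary facts about right syndeticity. First, it is \emph{upward closed}: if $S\subseteq T$ and $SF=G$ with $F$ finite, then $TF=G$. Second, it is \emph{invariant under left and right translation}: if $SF=G$, then $(hSk)(k^{-1}F)=hG=G$. Since $\Dif(A\setminus\{g\})\subseteq\Dif(A)\subseteq\Dif(A\cup\{g\})$, upward closure gives the implications from right to left in (\ref{ab}) at once. So it suffices to prove two things. One is that $\Dif(A\cup\{g\})$ right syndetic implies $\Dif(A)$ right syndetic. The other is the same statement for the pair $A\setminus\{g\}\subseteq A$, which is the same statement applied to $A':=A\setminus\{g\}$, because $A\subseteq A'\cup\{g\}$.

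For this, I would first check that the hypothesis passes to $A'$: if $g^{-1}xg=g$ then $x=g$, so $g^{-1}A'g\subseteq A'$. Hence it is enough to prove the implication ``$\Dif(A\cup\{g\})$ right syndetic $\Rightarrow$ $\Dif(A)$ right syndetic'' for any $A$ with $g^{-1}Ag\subseteq A$; we may assume $A\neq\emptyset$. Write
\[
\Dif(A\cup\{g\})=\Dif(A)\cup A^{-1}g\cup g^{-1}A\cup\{1\}.
\]
Suppose $\Dif(A\cup\{g\})F=G$ with $F$ finite. The goal is to show that each of the four pieces lies in a finite union $\bigcup_i \Dif(A)k_i$ of right translates. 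Then $G\subseteq \Dif(A)F'$ with $F'=\{k_if: i,\ f\in F\}$ finite. The pieces $\Dif(A)$ and $\{1\}\subseteq\Dif(A)$ are trivial. For the second piece, fix $a\in A$; then $A^{-1}g=(A^{-1}a)(a^{-1}g)\subseteq\Dif(A)\,a^{-1}g$.

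The main obstacle is the third piece $g^{-1}A$. Naively one only gets $g^{-1}A\subseteq g^{-1}a\,\Dif(A)$, a \emph{left} translate, which is of no use in a right covering. This is exactly where the hypothesis should enter. Because $g^{-1}Ag\subseteq A$, we have $g^{-1}A\subseteq Ag^{-1}$, and also $g^{-1}\Dif(A)g=\Dif(g^{-1}Ag)\subseteq\Dif(A)$. My first attempt is to combine these: each $g^{-1}b$ with $b\in A$ equals $b'g^{-1}$ with $b'=g^{-1}bg\in A$. I would then try to place $b'$ inside $\Dif(A)k$ for a fixed $k$, for instance by using the conjugation invariance to move $a$ across $\Dif(A)$. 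If that fails in general, I would fall back on inverting the covering: $\Dif(A)$ is symmetric, and $G=G^{-1}$. So it suffices to cover the part of $G$ hit by $g^{-1}A$ by using the relation $(g^{-1}A)^{-1}=A^{-1}g$, which has already been handled, together with the conjugation-invariance of $\Dif(A)$ under $g$ to convert left translates by powers of $g$ into right ones. This is the step I expect to need the most care.

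For (b), I would argue by induction on $|F|$, applying (a) one element at a time. The point to check is that the hypothesis is preserved at every stage. If $f,f'\in F$ commute and $f^{-1}Af\subseteq A$, then $f^{-1}(A\cup\{f'\})f\subseteq A\cup\{f'\}$, since $f^{-1}f'f=f'$. Likewise $f^{-1}(A\setminus\{f'\})f\subseteq A\setminus\{f'\}$, since $f^{-1}xf=f'$ forces $x=ff'f^{-1}=f'$. So every intermediate set $A\cup F_0$ and $A\setminus F_0$, for $F_0\subseteq F$, satisfies the hypothesis of (a) for each remaining $f\in F$. Chaining the equivalences of (\ref{ab}) then gives (\ref{abc}).
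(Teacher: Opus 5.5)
Your reductions are sound: upward closure gives the easy implications, and passing from $A\setminus\{g\}$ to the same implication for $A'=A\setminus\{g\}$ is correct. The containment $A^{-1}g\subseteq \Dif(A)\,a^{-1}g$ is also correct. Your induction for (b), including the check that $A\cup F_0$ and $A\setminus F_0$ inherit the hypothesis, is essentially the paper's.

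The gap is the step you flagged: $g^{-1}A$ is never handled. Your stated goal, putting every piece of $\Dif(A\cup\{g\})$ inside finitely many right translates of $\Dif(A)$, is false in general, even under $g^{-1}Ag\subseteq A$. In the free group $F(x,y)$ take $g=1$ and $A=\{yx^n:n\ge 0\}$. Then $\Dif(A)=\langle x\rangle$, and the elements $yx^n$ lie in pairwise distinct right cosets $\langle x\rangle yx^n$. Hence $g^{-1}A=A\not\subseteq \Dif(A)K$ for every finite $K$. Your first idea (via $g^{-1}A\subseteq Ag^{-1}$) needs exactly such a containment $A\subseteq\Dif(A)K$. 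Inverting the covering only gives the left covering $G=Y^{-1}\Dif(A\cup\{g\})$. There $A^{-1}g$ becomes the piece lying only in a right translate of $\Dif(A)$, so the obstacle is mirrored, not removed.

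The paper closes this step by assuming $1\in A$ (and $1\in Y$). Then $A\cup A^{-1}\subseteq\Dif(A)$, so $g^{-1}A\subseteq Ag^{-1}\subseteq \Dif(A)g^{-1}$ and $G=\Dif(A)(Y\cup\{g,g^{-1}\})Y$. That normalization is harmless for abelian $G$, but not in general. Replacing $(A,g)$ by $(a^{-1}A,a^{-1}g)$ keeps $\Dif(A)$ and $\Dif(A\cup\{g\})$, but changes the hypothesis to $g^{-1}Aa^{-1}g\subseteq a^{-1}A$.

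An argument valid in every group uses the covering itself rather than containments.
\begin{itemize}
\item Put $D=\Dif(A)$ and fix $a\in A$. You already have $G=DY_1\cup g^{-1}AY$ with $Y_1=\{1,a^{-1}g\}Y$.
\item Suppose $D$ were not right syndetic. Take any $x\in G$ and $K=\{1\}\cup Y^{-1}x$. There is some $w\notin DY_1K^{-1}$, i.e.\ $wK\cap DY_1=\emptyset$, hence $wK\subseteq g^{-1}AY$.
\item Write $w=g^{-1}a_0y_0$ and $wy_0^{-1}x=g^{-1}a_1y$ with $a_0,a_1\in A$ and $y_0,y\in Y$. This gives $a_1=a_0xy^{-1}$, i.e.\ $x\in DY$.
\item Since $x$ was arbitrary, $G=DY$, contradicting the assumption.
\end{itemize}
This proves (a) without even using $g^{-1}Ag\subseteq A$, and your induction then gives (b).
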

\begin{proof}
It is clear that if  $\Dif(A)$ is right syndetic, then $\Dif(A\cup F)$ is so (since $\Dif(A)\subseteq\Dif(A\cup F)$).
Now, let $\Dif(A\cup \{g\})Y=G$ and $1\in A\cap Y$ (without loss of the generality) for some finite subset $Y$.
Putting $Y_g=(Y\cup \{g,g^{-1}\})Y$ we have
$$
\Dif(A)Y_g=\Dif(A)YY \cup \Dif(A)gY\cup\Dif(A)g^{-1}Y\supseteq
\Dif(A)Y \cup A^{-1}gY\cup Ag^{-1}Y$$
$$\supseteq (\Dif(A) \cup A^{-1}g\cup g^{-1}A)Y
=\Dif(A\cup \{g\})Y=G
$$
Thus $\Dif(A)$ is syndetic since $Y_g$ is finite.\\
If $\Dif(A)$ is syndetic and $g\in A$ (for the case $g\notin A$ it is obvious), then $A=(A\setminus\{g\})\cup\{g\}$, and \(g^{-1}(A\setminus\{g\})g\subseteq A\setminus\{g\}\)
(by the assumption \(g^{-1}Ag\subseteq A\)) so the above argument requirers that $A\setminus\{g\}$ is also syndetic.\\
We prove (b) by using the induction on $|F|$. If $|F|=1$, then it is true by part (a). Suppose that (\ref{abc}) is true for
all finite subsets $F$ of size $n$. If $\mathcal{F}$ is a set of size $n+1$ with the mentioned conditions
and $f_0\in \mathcal{F}$, then \ref{ab} holds for $F=\mathcal{F}\setminus \{f_0\}$. Now,
if $\Dif(A\cup\mathcal{F})$ is right syndetic, then $\Dif((A\cup F)\cup\{f_0\})$ is right syndetic and
$$f_0^{-1}(A\cup F)f_0=f_0^{-1}Af_0\cup F\subseteq A\cup F,$$ and so part (a) implies $\Dif(A\cup F)$ is right syndetic.
This requires $A$ is also right syndetic (by the hypothesis of the induction).\\
This time, let the conditions of part (b) hold and $\Dif(A)$ is right syndetic, then $\Dif((A\setminus F)\cup (F\cap A))$
is syndetic and $f^{-1}(A\setminus F)f\subseteq A\setminus F$ for all $f\in F\supseteq F\cap A$ (for if $f^{-1}\alpha f=f'$
for some $f,f'\in F$ and $\alpha\in A\setminus F$, then $\alpha f=ff'=f'f$ and so $\alpha=f'\in F$ that
is a contradiction), and hence $\Dif(A\setminus F)$ is right syndetic by the above argument. Now one can complete the proof easily.
\end{proof}
\begin{corollary}\label{abeliansyndetic}
If $G$ is abelian and $\Dif(A)$ is syndetic, then $\Dif(A\cup F)$ and $\Dif(A\setminus F)$ are also
 syndetic for all finite subsets $F$.
\end{corollary}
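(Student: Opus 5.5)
The plan is to derive the corollary directly from Lemma~\ref{DifferenceSyndeticProperty}(b), checking that its two hypotheses hold automatically in an abelian group. If $G$ is abelian, then $f^{-1}Af=A$ for every $f\in G$, so certainly $f^{-1}Af\subseteq A$ for all $f\in F$. Moreover, all elements of $F$ commute with each other. Hence the equivalences (\ref{abc}) hold for every finite $F\subseteq G$. In particular, right syndeticity of $\Dif(A)$ implies right syndeticity of both $\Dif(A\cup F)$ and $\Dif(A\setminus F)$. In an abelian group left and right syndeticity coincide, so ``syndetic'' is unambiguous.

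If $G$ is finite, every subset containing $1$ is trivially syndetic, since one can take the finite set to be $G$ itself; so the statement is only meaningful in the infinite case. There the lemma applies verbatim.

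The only point needing care is the degenerate cases. When $A\setminus F=\emptyset$, we get $\Dif(\emptyset)=\emptyset$, which is not syndetic. But then $A\subseteq F$ is finite, so $\Dif(A)$ is finite, and a finite set is not syndetic in an infinite group. The hypothesis therefore already excludes this case.

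The argument of Lemma~\ref{DifferenceSyndeticProperty}(a) also normalized $1\in A\cap Y$. Since $\Dif(gA)=\Dif(A)$ and syndeticity is preserved under replacing $Y$ by $Y\cup\{1\}$, this normalization is harmless here as well.

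I expect no real obstacle. The corollary is a specialization of Lemma~\ref{DifferenceSyndeticProperty}(b) in which commutativity discharges both hypotheses.
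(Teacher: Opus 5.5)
Your proposal is correct and matches the paper, which states this corollary without separate proof as an immediate specialization of Lemma~\ref{DifferenceSyndeticProperty}(b): in an abelian group both hypotheses (conjugation-invariance of $A$ and pairwise commuting elements of $F$) hold automatically, and your checks of the degenerate case $A\subseteq F$ and of the normalization $1\in A$ (translating $A$ together with $g$) fill in details the paper leaves implicit. One small correction to your finite-group aside: under the naive extension of syndeticity the statement is not trivially true there, since $\emptyset\neq A\subseteq F$ gives $\Dif(A\setminus F)=\emptyset$; this is harmless because the paper defines syndeticity only for infinite groups.
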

\begin{remark}
The above results give us a main criteria for finiteness status of subindices of subsets of infinite groups by looking at
the syndecity of $\Dif(A)$   as follows.
Let $A$ be a subset of an \underline{abelian} infinite group $G$. Then
\begin{enumerate}[a)]
\item if $\Dif(A)$ is non right syndetic, then $A$ and all its subsets have infinite right subindices as well as
 upper sets of $A$ which have only finitely many elements more than $A$.
Moreover, the mentioned subsets of this case are right index stable with the right index $\aleph_0$
if $G$ is countable (this is very useful for the studies in infinitely countable groups, especially integers, as we see later).
\item if $\Dif(A)$ is right syndetic, then all cases of finiteness and infiniteness of right subindices,
and also right index stability or non-right index stability may be occurred that is the more challengeable case
and we should apply other tools (see the next section).
\end{enumerate}
\end{remark}

\subsection{The RSFA algorithm for infinite groups}
In \cite{Hooshmand2023} (the last section) we introduced a complete algorithm for finding all right sub-factors in finite groups. Now we want to find a way for applying it for infinite groups.
For the order, we look at the basic idea of the algorithm. It says if \(G\) is an arbitrary group, \(A,B\subseteq G\), \(AB=A\cdot B\),  and \(x\notin B\), then
\begin{equation}
A(B\cup \{x\})=A\cdot (B\cup \{x\})\Leftrightarrow x\notin A^{-1}AB\Leftrightarrow x\in \bigcap_{b\in B}\mathcal{C}(A)b
\Leftrightarrow  B\subseteq \mathcal{C}(A)x \Leftrightarrow A^{-1}Ax\subseteq G\setminus B.
\end{equation}
Since \(A\{g_0\}=A\cdot \{g_0\}\) for every fixed element \(g_0\in G\) and \(A\subseteq G\), we can extend this direct product
to \(A\{g_0,g_1,\cdots\}=A\cdot \{g_0,g_1,\cdots\}\) as much as possible. Hence, according the method of Theorem 4.1 (an algorithm for finite groups) of \cite{Hooshmand2023}, we can present the algorithm by two versions, namely RSFA:\\

\underline{The union version}.
Let \(g_0 \in G\) be chosen arbitrary. Let \(g_0,...,g_n,\ n\geq0\) be already constructed. If \(A^{-1}Ag_0\cup...\cup A^{-1}Ag_n=G\) then the algorithm is ended otherwise, continue by choosing an element \(g_{n+1}\notin A^{-1}Ag_0\cup...\cup A^{-1}Ag_n\). Then, we obtain a (finite or infinite) sequence \(\gamma=\{g_n\}_{n\geq 0}\) with the next properties (see Theorem~\ref{RSFAProperties}).
It is interesting to know that here the union \(Ag_0\cup...\cup Ag_n\)
is certainly a disjoint union (but \(A^{-1}Ag_0\cup...\cup A^{-1}Ag_n\) is not necessarily disjoint). \\
\underline{The intersection version}. Let \(g_0 \in G\) be chosen arbitrary. Let \(g_0,...,g_n,\ n\geq0\) be already constructed. If \(\mathcal{C}(A)g_0\cap...\cap \mathcal{C}(A)g_n=\emptyset\) then the algorithm is ended otherwise, continue by choosing an element \(g_{n+1}\in \mathcal{C}(A)g_0\cap...\cap \mathcal{C}(A)g_n\). Then, we obtain the sequence \(\gamma=\{g_n\}_{n\geq 0}\).\\
In \cite{Hooshmand2023}, the intersection version, \(\mathcal{C}(A)g_0\cap...\cap \mathcal{C}(A)g_n\) is denoted by
\(\mathcal{C}^{(n)}_\ell (A)\). It is obvious that \(\mathcal{C}^{(n)}_\ell (A)\subseteq \mathcal{C}(A)\cap \cdots\cap\mathcal{C}(A)^n\) if
\(g_0=1\).\\
Regarding the RSFA, denote by \(N\) the least integer \(N\geq 0\) such that \(A^{-1}Ag_0\cup...\cup A^{-1}Ag_N=G\) (equivalently
\(\mathcal{C}(A)g_0\cap...\cap \mathcal{C}(A)g_N=\emptyset\))  if exists
(such an \(N\) certainly exists in finite groups), and otherwise set \(N=\aleph_0\).
We call such numbers \(N\) (resp. sequences \(\gamma\)) RSFA-numbers (resp. RSFA-sequences) of $A$ (in $G$).
Therefore \(\gamma=\{g_n\}_{n\geq 0}=\{g_n\}_{n=0}^{N}\) and \(|\gamma|=N+1\)
(\(|\gamma|\) is the length of \(\gamma\), and indeed the cardinal number of its image).
Hence an integer number $m$ is a RSFA number if and only if there is a RSFA sequence of the length $m+1$. \\
A RSFA sequence \(\gamma=\{g_n\}_{n\geq 0}\) is called maximal if \(\bigcap_{n\geq0}\mathcal{C}(A)g_{n}=\emptyset\) or equivalently
\[
A^{-1}A\{g_0,\ldots,g_n,\ldots\}=G.
\]

\begin{lemma}\label{ComlpeteRSFA}
Let \(G\) be a group and \(A\subseteq G\). A countable subset \(B\) is a right subfactor of \(G\) relative to \(A\) if and only if
it is the image of a maximal RSFA sequence \(\gamma\) (of \(A\)). Hence,
\begin{itemize}
\item
all the countable right sub-factors can be obtained by the RSFA,
\item
the RSFA is a complete algorithm (i.e., all right sub-factors can be obtained by it) if \(G\) is countable.
\end{itemize}
\end{lemma}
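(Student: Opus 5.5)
The plan is to prove the two directions separately, relying throughout on the characterization (\ref{SubfactorCondition}): $B\in\SubF_r(G:A)$ if and only if $A^{-1}A\cap BB^{-1}=\{1\}$ and $A^{-1}AB=G$. The basic equivalence stated just before the lemma will also be used: if $AB=A\cdot B$ and $x\notin B$, then $A(B\cup\{x\})$ is direct if and only if $x\notin A^{-1}AB$.

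\emph{Image of a maximal RSFA sequence $\Rightarrow$ countable subfactor.} Let $\gamma=\{g_n\}_{n\ge0}$ be a maximal RSFA sequence and let $B=\{g_n: n\ge 0\}$. Then $B$ is countable. The elements $g_n$ are pairwise distinct, since $g_{n+1}\notin A^{-1}Ag_k\ni g_k$ for every $k\le n$; here we use $1\in A^{-1}A$ when $A\neq\emptyset$.

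We show by induction on $n$ that $A\{g_0,\dots,g_n\}$ is a direct product. The base case holds trivially. For the induction step, $g_{n+1}\notin A^{-1}A\{g_0,\dots,g_n\}$, so the basic equivalence gives directness of $A\{g_0,\dots,g_{n+1}\}$.

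Directness of $AB$ then follows, because it is a finitary condition: any violation $ab=a'b'$ involves only two elements of $B$, so it would already appear at some finite stage. Hence $A^{-1}A\cap BB^{-1}=\{1\}$. Maximality of $\gamma$ says exactly that $A^{-1}AB=G$, so $B\in\SubF_r(G:A)$ by (\ref{SubfactorCondition}). If the algorithm stops after finitely many steps, the same argument applies with $N$ finite.

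\emph{Countable subfactor $\Rightarrow$ image of a maximal RSFA sequence.} Let $B\in\SubF_r(G:A)$ be countable, and enumerate it injectively as $B=\{b_0,b_1,\dots\}$ (finite or infinite). We check that $g_n:=b_n$ satisfies the RSFA rule.

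The choice of $g_0$ is free. For $n\ge0$, we need $b_{n+1}\notin A^{-1}A\{b_0,\dots,b_n\}$. Suppose instead that $b_{n+1}=xb_k$ with $x\in A^{-1}A$ and $k\le n$. Then $x=b_{n+1}b_k^{-1}\in A^{-1}A\cap BB^{-1}=\{1\}$, so $b_{n+1}=b_k$, which contradicts injectivity.

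There are two cases for termination.

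\begin{itemize}
\item If $B$ is finite with $|B|=N+1$, then $A^{-1}A\{b_0,\dots,b_N\}=G$, so the algorithm legitimately ends at step $N$.
\item If $B$ is infinite, the algorithm never stops at any finite stage. Indeed, $b_{n+1}$ lies outside $A^{-1}A\{b_0,\dots,b_n\}$, so this union is not $G$.
\end{itemize}

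In both cases the sequence is maximal because $A^{-1}AB=G$.

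\emph{The bulleted consequences.} The first bullet restates the equivalence just proved. For the second, if $G$ is countable then every subset of $G$, and in particular every right subfactor, is countable, so every subfactor is produced by the algorithm.

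The main obstacle is bookkeeping rather than substance. We must make sure the definitions of RSFA sequence, maximality and $N$ are matched correctly in the finite case and in the infinite case. We must also justify the passage from directness at every finite stage to directness of the whole product, which the finitary argument above handles. The edge case $A=\emptyset$ should be excluded or treated separately, since then $A^{-1}A=\emptyset$.
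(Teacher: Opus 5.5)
Your proposal is correct and follows essentially the same route as the paper. The direction ``countable subfactor $\Rightarrow$ image of a maximal RSFA sequence'' is exactly the paper's argument: enumerate $B$ injectively, set $g_n=b_n$, and use $A^{-1}A\cap BB^{-1}=\{1\}$ to see that each $b_{n+1}$ is an admissible choice. For the converse the paper only cites \cite[Lemma 3.6]{Hooshmand2020}, whereas you give a direct verification via the one-step extension criterion and the finitary nature of directness.

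Your caveat about $A=\emptyset$ is well founded. There $\SubF_r(G:\emptyset)=\{G\}$, but no RSFA sequence is maximal since $A^{-1}A=\emptyset$. So for countable $G$ the lemma genuinely needs the tacit assumption $A\neq\emptyset$.
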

\begin{proof}
Let $B$ be a countable element of $\SubF_r(A)$ and represent it by $B=\{b_n\}_{n=0}^{m}$
where $m$ is either a non-negative integer or $\aleph_0$.
 Now in the algorithm choose $g_0=b_0$ (since $g_0$ is arbitrary in it).
		If $A^{-1}A=G$, then  $|B|=1$, $N=0$ (the related RSFA number) and
		$B=\{b_0\}=\{g_0\}$ thus we are done. Otherwise, suppose that $g_0,\ldots,g_n$ take
		the values $b_0,\ldots,b_n$, respectively,  for
		some $n<m+1$. Since the product $A (\{b_0,\ldots , b_{n}\}\cup \{b_{n+1}\})$ is direct,
		$$
		b_{n+1}\in \bigcap_{i=0}^n\mathcal{C}(A)b_i=\bigcap_{i=0}^n\mathcal{C}(A)g_i=(\bigcup_{i=0}^nA^{-1}Ag_i)^c.
		$$
		So $g_{n+1}$ can take the value $b_{n+1}$ in the algorithm process.
		Therefore $B$ is a subset of (the image of) $\gamma=\{g_n\}_{n\geq 0}$ and so $B=\{g_0,\ldots,g_n,\ldots\}$
(the image of $\gamma$) since
$B$ is maximal. This also implies that $\gamma$ is a maximal RSFA sequence.

Conversely, (the image of) every maximal RSFA sequence \(\gamma\) is a right subfactor of \(G\) relative to \(A\),
by  \cite[Lemma 3.6]{Hooshmand2020} and the above lemma.
\end{proof}
\begin{corollary}\label{ImageSubset}
Let \(B\) be a nonempty countable subset of \(G\). Then, \(AB=A\cdot B\) if and only if \(B\) is a subset of
the image of a RSFA sequence of \(A\) (this existed RSFA sequence can be maximal if \(G\) is countable) .
\end{corollary}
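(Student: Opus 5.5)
We deduce the statement from Lemma~\ref{ComlpeteRSFA} together with the basic property of direct products: a subset of a set $D$ with $AD=A\cdot D$ again has direct product with $A$. The two directions are handled separately.

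\emph{Sufficiency.} Suppose $B$ is contained in the image of an RSFA sequence $\gamma=\{g_n\}_{n\geq 0}$ of $A$. We show by induction on $n$ that $A\{g_0,\ldots,g_n\}=A\cdot\{g_0,\ldots,g_n\}$.
\begin{itemize}
\item The case $n=0$ is trivial.
\item For the inductive step, the algorithm chooses
\[
g_{n+1}\notin A^{-1}A\{g_0,\ldots,g_n\}.
\]
By the displayed equivalence preceding the description of the RSFA, this is exactly the condition that $A(\{g_0,\ldots,g_n\}\cup\{g_{n+1}\})$ is direct.
\end{itemize}
Directness is a condition on pairs of elements: $AB=A\cdot B$ if and only if $aB\cap a'B=\emptyset$, or equivalently $A^{-1}A\cap BB^{-1}=\{1\}$, and each element of $BB^{-1}$ involves only two elements of $B$. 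Hence the finite stages give $A\,\mathrm{im}(\gamma)=A\cdot\mathrm{im}(\gamma)$. Restricting to the subset $B$ then gives $AB=A\cdot B$.

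\emph{Necessity.} Suppose $AB=A\cdot B$ with $B$ nonempty and countable, and enumerate $B=\{b_n\}_{n=0}^{m}$. We repeat the construction from the proof of Lemma~\ref{ComlpeteRSFA}.
\begin{itemize}
\item Set $g_0=b_0$, which is allowed because $g_0$ is arbitrary in the algorithm.
\item Inductively, the directness of $A\{b_0,\ldots,b_{n+1}\}$ gives
\[
b_{n+1}\in\bigcap_{i\le n}\mathcal{C}(A)b_i,
\]
so the algorithm may choose $g_{n+1}=b_{n+1}$.
\item If $B$ is finite and the algorithm has not terminated after $b_m$, we continue choosing arbitrary admissible elements. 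This produces an RSFA sequence (finite or infinite) whose image contains $B$.
\end{itemize}

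\emph{The countable case.} When $G$ is countable we want the sequence to be maximal, and this is the only delicate point. We first use Zorn's lemma (or \cite[Corollary~3.3]{Hooshmand2020}) to enlarge $B$ to a right subfactor $B'\supseteq B$ relative to $A$. Since $G$ is countable, $B'$ is countable, so Lemma~\ref{ComlpeteRSFA} shows that $B'$ is the image of a maximal RSFA sequence. To make that sequence's image contain $B$ explicitly, we enumerate $B'$ starting with the elements of $B$; the proof of the lemma allows any enumeration.

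Nothing here is a serious obstacle. The step needing most care is the passage from finite stages to the whole (possibly infinite) image, which relies on directness being a pairwise condition.
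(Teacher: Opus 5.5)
Your proof is correct and follows the paper's route. The paper obtains the corollary directly from \cite[Corollary~3.3]{Hooshmand2020} (extending $B$ to a right subfactor) together with Lemma~\ref{ComlpeteRSFA} and its enumeration argument, which is what you do; running the lemma's construction on $B$ itself in the general case is in fact slightly more careful than the paper's one-line citation, because for uncountable $G$ the subfactor containing $B$ need not be countable.

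One small point: the remark about enumerating $B'$ ``starting with the elements of $B$'' is unnecessary. It is also not possible as an $\omega$-indexed sequence when $B$ is infinite and $B'\neq B$. You can drop it, since the image of the maximal sequence is $B'\supseteq B$ for any enumeration.
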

\begin{proof}
This is a direct result of  \cite[Corollary 3.3]{Hooshmand2020} and the above lemma.
\end{proof}
\begin{example}
Let $G=\mathbb{Z}$ and $A=\{0,1\}$. Then \(\gamma=\{g_n=2n\}_{n\geq 0}\) is a RSFA sequence related to $A$ that is not maximal,
since $A-A=\{-1,0,1\}$ and  \(g_{n+1}=2n+2\notin\{-1,0,1\}+g_0\cup \ldots\cup \{-1,0,1\}+g_n\)
and \(\{-1,0,1\}\cup \ldots\cup \{-1,0,1\}+2n\cup\ldots\neq \mathbb{Z}\). But one can check that the sequence
$$
g'_n=(-1)^{n+1}2\lfloor \frac{n+1}{2}\rfloor
$$
is a maximal RSFA sequence related to $\{0,1\}$.\\
Also, the set of all odd numbers is an example of a subset of the infinite group $G=\mathbb{Z}$ such that all the
RSFA numbers are finite (and so maximal). \\
Note that if $G=\mathbb{R}$, then non of the RSFA sequences related to the subset $A=\{0,1\}$ are maximal.\\
\end{example}
The above lemma and corollary require the following properties easily.
\begin{theorem}\label{RSFAtheorem}
Let \(G\) be an arbitrary group, \(A\subseteq G\), and \(\{g_n\}_{n\geq 0}\),
\(N\) are as mentioned in the RSFA. Then
\begin{enumerate}[a)]
\item \(A\{g_0,\cdots,g_n\}=A\cdot \{g_0,\cdots,g_n\}\) for all \(0\leq n\leq N\), and \(\{g_0,\cdots,g_n\}\) is not a right sub-factor if \(n<N\).\\
\item \(\{g_0,\cdots,g_N\}\) is a right sub-factor related to \(A\) if and only if \(\gamma=\{g_n\}_{n=0}^{N}\) is maximal.
Hence, \(\{g_0,\cdots,g_N\}\) is a right sub-factor related to \(A\) if \(N\) is finite (e.g., if $|G:A|^+$ is finite). \\
\item \(|G:A|^-\leq \aleph_0\) if and only if there is a maximal RSFA-sequence. Hence \(|G:A|^-> \aleph_0\) if and only if
all the RSFA-sequences are \((\)infinite and\()\) not maximal.\\
\item  Always we have
\[
|G:A|^+\geq|G:A|^-\geq \min\{N+1: \mbox{\(N\) is a RSFA-number} \}
\]
Thus, if there is a maximal RSFA-sequence, then
\[
|G:A|^-=\min\{|\gamma|: \; \gamma \; \mbox{is a maximal RSFA sequence} \}=\min\{N+1: \mbox{ \(N\) is a RSFA-number} \},
\]
\[
|G:A|^+\geq\max\{|\gamma|: \; \gamma \; \mbox{is a maximal RSFA sequence} \}.
\]

\item Let \(m\) be a given positive integer, then
\begin{itemize}
\item \(|G:A|^-=m+1\)  if and only if
\(A^{-1}Ax_0\cup\cdots\cup A^{-1}Ax_{m-1}\neq G\)
for all RSFA sequences \(\{x_n\}_{n\geq 0}\) and \(A^{-1}Ay_0\cup\cdots\cup A^{-1}Ay_{m}= G\)
for some RSFA sequences \(\{y_n\}_{n\geq 0}\).
\item \(|G:A|^+=m+1\) if and only if
\(A^{-1}Ax_0\cup\cdots\cup A^{-1}Ax_{m}= G\)
for all RSFA sequences \(\{x_n\}_{n\geq 0}\) and \(A^{-1}Ay_0\cup\cdots\cup A^{-1}Ay_{m-1}\neq G\)
for some RSFA sequences \(\{y_n\}_{n\geq 0}\).
\item \(|G:A|_r=m+1\) (i.e., \(A\) is right index stable with the right index \(m+1\)) if and only if
\(A^{-1}Ax_0\cup\cdots\cup A^{-1}Ax_{m}= G\)
and \(A^{-1}Ax_0\cup\cdots\cup A^{-1}Ax_{m-1}\neq G\)
for all RSFA sequences \(\{x_n\}_{n\geq 0}\).
\end{itemize}
\item \(|G:A|^-=\aleph_0\) if and only if
all the RSFA sequences are infinite and some of them maximal.
\end{enumerate}
\end{theorem}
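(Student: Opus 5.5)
The plan is to derive all six parts from two facts. The first is the equivalence displayed before the union version of the RSFA: for $AB=A\cdot B$ and $x\notin B$, the product $A(B\cup\{x\})$ is direct exactly when $x\notin A^{-1}AB$. The second is the characterization (\ref{SubfactorCondition}) of right subfactors, together with Lemma~\ref{ComlpeteRSFA} and Corollary~\ref{ImageSubset}.

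For (a), I will induct on $n$. The case $n=0$ is trivial. Since $g_{n+1}$ is chosen outside $A^{-1}A\{g_0,\dots,g_n\}$, the basic equivalence shows that directness is preserved, and in particular $g_{n+1}\neq g_i$ for $i\le n$. If $n<N$, the algorithm has not stopped, so $A^{-1}A\{g_0,\dots,g_n\}\neq G$. The second condition of (\ref{SubfactorCondition}) then fails, and $\{g_0,\dots,g_n\}$ is not a right subfactor. For (b), apply (a) to the whole sequence: its image $B$ satisfies $AB=A\cdot B$, because any failure of directness involves only finitely many elements. By (\ref{SubfactorCondition}), $B$ is a right subfactor iff $A^{-1}AB=G$, which is exactly maximality. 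When $N$ is finite, the algorithm stops precisely because $A^{-1}A\{g_0,\dots,g_N\}=G$, so the sequence is maximal. If $|G:A|^+<\aleph_0$, an infinite sequence would give an infinite set $B$ with $AB=A\cdot B$, contradicting $|G:A|^+=\sup\{|B|:AB=A\cdot B\}$. Hence $N$ is finite in that case.

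For (c), if $|G:A|^-\le\aleph_0$, the minimum is attained by a countable subfactor. Lemma~\ref{ComlpeteRSFA} then makes it the image of a maximal RSFA sequence. Conversely, the image of a maximal sequence is a countable subfactor by (b). The contrapositive gives the second sentence, since every finite sequence is maximal. For (d), take a minimal subfactor $B$. If $B$ is countable, Lemma~\ref{ComlpeteRSFA} realizes it as a maximal sequence of length $|B|=N+1\ge\min\{N'+1\}$. If $B$ is uncountable, the inequality is immediate because every RSFA number is at most $\aleph_0$. When maximal sequences exist, (c) shows the minimum is attained among countable subfactors, and these are exactly the images of maximal sequences, which gives the equalities. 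Here one must check that a non-maximal sequence has $N=\aleph_0$, so it does not lower the minimum below that of maximal ones. This holds because all finite sequences are maximal. The bound for $|G:A|^+$ is immediate from (b).

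Parts (e) and (f) are translations. For (e), use the fact that every finite $B$ with $AB=A\cdot B$ can be enumerated as the initial segment of an RSFA sequence (Corollary~\ref{ImageSubset}, arguing as in the proof of Lemma~\ref{ComlpeteRSFA}). Then ``some direct $B$ of size $m+1$ is a subfactor'' reads as ``$A^{-1}Ay_0\cup\dots\cup A^{-1}Ay_m=G$ for some sequence''. Likewise, ``no direct set of size $m$ is a subfactor'' reads as the stated condition on $x_0,\dots,x_{m-1}$ for all sequences. The upper-index and stable versions follow the same way, using (d) and the supremum description of $|G:A|^+$. Part (f) follows from (c) and (d): the minimum is $\aleph_0$ iff no finite RSFA number exists while some maximal sequence does.

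The main obstacle is the bookkeeping in (d) and (e). There I must make sure that every finite direct set appears as an initial segment of some RSFA sequence, i.e.\ that the enumeration argument of Lemma~\ref{ComlpeteRSFA} works for non-maximal sets. I would handle this first, by redoing that induction for an arbitrary finite direct $B$.
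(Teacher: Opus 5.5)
Your route is the paper's. The paper gives no separate proof and only remarks that the theorem follows easily from Lemma~\ref{ComlpeteRSFA} and Corollary~\ref{ImageSubset}. Your arguments for (a)--(d), (f) and the upper-index bullet of (e) are a correct expansion of that remark.

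One step in (e) fails as written. For the bullet on $|G:A|^-$, you translate the clause on $x_0,\dots,x_{m-1}$ as ``no direct set of size $m$ is a subfactor'' and pair it with ``some subfactor has size $m+1$''. But $|G:A|^-=m+1$ requires that no subfactor has size \emph{at most} $m$. Unlike directness, being a subfactor is not inherited by subsets, so ruling out subfactors of size exactly $m$ does not rule out smaller ones. Concretely, Proposition~\ref{codif} gives $A\subseteq\mathbb{Z}$ with $\mathcal{C}(A)=\{\pm1,\pm2,\pm3,\pm10\}$. The subfactors containing $0$ are $[a,a+3]_{\mathbb{Z}}$ for $-3\le a\le 0$, together with $\{0,10\}$ and $\{0,-10\}$. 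So for $m=3$ both of your conditions hold, while $|\mathbb{Z}:A|^-=2$. Your upper-index argument is unaffected: there, ``no direct set of size $m+2$'' does give ``no direct set of size $\ge m+2$'', because directness passes to subsets.

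The repair is short.
\begin{enumerate}
\item Read the ``for all'' clause as also covering RSFA sequences that stop before index $m-1$; for these the union of the available terms is already $G$. The clause then says every RSFA-number is $\ge m$.
\item The ``for some'' clause says some RSFA-number is $\le m$.
\item Together they say exactly that $\min\{N+1: N \text{ is an RSFA-number}\}=m+1$.
\item A finite RSFA sequence is maximal, so your part (d) converts this into $|G:A|^-=m+1$, and conversely.
\end{enumerate}
Equivalently, by Lemma~\ref{ComlpeteRSFA}, a subfactor of size $k\le m$ is precisely a maximal RSFA sequence with RSFA-number $k-1\le m-1$.

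The same reading is needed in the third bullet of (e). If short sequences are treated as vacuously satisfying the conditions, the example above meets that bullet's conditions for $m=3$ (its RSFA-numbers are $1$ and $3$). Yet it is not right index stable, since $|\mathbb{Z}:A|^-=2\neq 4=|\mathbb{Z}:A|^+$.
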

\begin{corollary}\label{RSFAProperties}
Let \(G\) be an arbitrary group, \(A\subseteq G\). Then
\begin{enumerate}[a)]
\item If \(\mathcal{C}(A)\cap \cdots\cap\mathcal{C}(A)^m=\emptyset\), then (all the RSFA-numbers
are less than or equal to \(m\) and so) \(|G:A|^+\leq m\).
\item For the particular cases of ... (\(m=1,2,3\)) we have
\[
|G:A|_r=1\Leftrightarrow \mathcal{C}(A)=\emptyset \Leftrightarrow \text{ all the RSFA-numbers are 1}\Leftrightarrow \Dif(A)=G
\]
\[
|G:A|_r=2\Leftrightarrow \mathcal{C}(A)\neq\emptyset,
\mathcal{C}(A)\cap \mathcal{C}(A)\mathcal{C}(A)=\emptyset \Leftrightarrow
\mathcal{C}(A)\cap \mathcal{C}(A)x=\emptyset
\text{ ; }\forall x\in \mathcal{C}(A)
\]
\[
|G:A|_r=3\Leftrightarrow
\mathcal{C}(A)\cap \mathcal{C}(A)\mathcal{C}(A)\neq\emptyset ,
\mathcal{C}(A)\cap \mathcal{C}(A)x\cap\mathcal{C}(A)y=\emptyset
\text{ ; }\forall x\in \mathcal{C}(A),\ \forall y\in \mathcal{C}(A)\cap \mathcal{C}(A)x
\]
\end{enumerate}
\end{corollary}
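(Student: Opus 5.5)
Throughout I normalize by a right translation. Since $\SubF_r(G:A)g=\SubF_r(G:A)$, and RSFA sequences are stable under right multiplication by a fixed element, I may assume $g_0=1$ and work with $\SubF^1_r(G:A)$; write $\mathcal{C}=\mathcal{C}(A)$.

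\textbf{Part (a).} The key observation is an induction along an RSFA sequence with $g_0=1$. Since $g_1\in\mathcal{C}g_0=\mathcal{C}$, and $g_n\in\mathcal{C}g_0\cap\cdots\cap\mathcal{C}g_{n-1}$, I claim $g_n\in\mathcal{C}\cap\mathcal{C}^2\cap\cdots\cap\mathcal{C}^n$ for every $n\ge1$.
\begin{itemize}
\item First, $g_n\in\mathcal{C}g_{n-1}$, and inductively $g_{n-1}\in\mathcal{C}^{n-1}$, so $g_n\in\mathcal{C}^n$.
\item More generally, for each $1\le k\le n$ we have $g_n\in\mathcal{C}g_{k-1}\subseteq\mathcal{C}\cdot\mathcal{C}^{k-1}=\mathcal{C}^k$.
\end{itemize}
Hence if $\mathcal{C}\cap\cdots\cap\mathcal{C}^m=\emptyset$, no $g_m$ can be chosen, so every RSFA number is at most $m-1$.

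To pass to $|G:A|^+$, including possibly uncountable $B$, I take any $B$ with $AB=A\cdot B$ and $1\in B$. Suppose $B$ had $m+1$ elements. Enumerating them with $1$ first gives an initial segment of an RSFA sequence by Corollary~\ref{ImageSubset} (or directly by the extension criterion preceding the RSFA). That is impossible, so $|B|\le m$. Since $|G:A|^+=\sup\{|B|:AB=A\cdot B\}$, this gives $|G:A|^+\le m$. This is in fact slightly sharper than stated.

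\textbf{Part (b), cases $m=1,2$.}
\begin{itemize}
\item For $m=1$: $\mathcal{C}=\emptyset$ means $\Dif(A)=G$, so every singleton is a subfactor and $|G:A|^\pm=1$. Conversely, $|G:A|^-=1$ gives $\Dif(A)\{b\}=G$, so $\Dif(A)=G$.
\item For $m=2$: if $\mathcal{C}\neq\emptyset$, the $m=1$ case gives $|G:A|^-\ge2$. If in addition $\mathcal{C}\cap\mathcal{C}^2=\emptyset$, part (a) gives $|G:A|^+\le2$.
\item Conversely for $m=2$: suppose $x\in\mathcal{C}$ and $y\in\mathcal{C}\cap\mathcal{C}x$. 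Then $B=\{1,x,y\}$ satisfies $BB^{-1}\subseteq\mathcal{C}^1$, using $x,y,yx^{-1}\in\mathcal{C}$ and symmetry of $\mathcal{C}$. Thus $AB$ is direct and $|G:A|^+\ge3$.
\item The reformulation $\mathcal{C}\cap\mathcal{C}\mathcal{C}=\bigcup_{x\in\mathcal{C}}(\mathcal{C}\cap\mathcal{C}x)$ is immediate.
\end{itemize}

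\textbf{Part (b), case $m=3$.}
\begin{itemize}
\item Upper bound: the second condition says no RSFA sequence from $1$ reaches a fourth term. By the argument of (a), every direct $B\ni1$ has $|B|\le3$.
\item Lower bound on $|G:A|^+$: $\mathcal{C}\cap\mathcal{C}^2\neq\emptyset$ yields the $3$-element set above, so $|G:A|^+\ge3$.
\item Converse: read off in the same way from Theorem~\ref{RSFAtheorem}(e) with $m=2$.
\end{itemize}

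The main obstacle is the lower bound $|G:A|^-\ge3$. One needs that no two-element set $\{1,x\}$ (with $x\in\mathcal{C}$) is a subfactor, i.e.\ $\mathcal{C}\cap\mathcal{C}x\neq\emptyset$ for \emph{every} $x\in\mathcal{C}$. The displayed condition $\mathcal{C}\cap\mathcal{C}\mathcal{C}\neq\emptyset$ only gives this for some $x$.

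I would first try to derive the ``for all $x$'' version from the other hypothesis together with Theorem~\ref{RSFAtheorem}(e). If that fails, I would state the $m=3$ characterization with the first condition strengthened to $\mathcal{C}\cap\mathcal{C}x\neq\emptyset$ for all $x\in\mathcal{C}$, which is exactly what (e) requires.
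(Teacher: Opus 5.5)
\textbf{Parts (a) and $m=1,2$.} These are correct and follow the route the paper intends. The paper gives no separate proof; it relies on the inclusion $\mathcal{C}^{(n)}_\ell(A)\subseteq\mathcal{C}(A)\cap\cdots\cap\mathcal{C}(A)^n$ for $g_0=1$, together with Theorem~\ref{RSFAtheorem}.

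Two of your steps are exactly what is needed:
\begin{itemize}
\item Your reduction of an arbitrary, possibly uncountable, direct $B\ni 1$ to a finite initial segment of an RSFA sequence is what makes the bound on $|G:A|^+$ legitimate.
\item Your sharper count, ``RSFA-numbers at most $m-1$'', is the one actually needed. The paper's phrasing ``RSFA-numbers $\le m$'' would only give $|G:A|^+\le m+1$.
\end{itemize}
By the same count, the clause ``all the RSFA-numbers are 1'' in the case $m=1$ should read $0$. In the $m=2$ line, the condition $\mathcal{C}(A)\neq\emptyset$ has to be kept in the last equivalent form, otherwise it is vacuously satisfied.

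\textbf{Case $m=3$.} The obstacle you isolate is genuine and cannot be removed, so your first fallback (deriving the ``for all $x$'' condition from the other hypothesis) will fail. The ``$\Leftarrow$'' direction of the displayed $m=3$ equivalence is false.

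Take $G=\mathbb{Z}_6$ and $A=\{0,1\}$. Then $\Dif(A)=\{0,1,5\}$ and $\mathcal{C}(A)=\{2,3,4\}$.
\begin{itemize}
\item First condition: $\mathcal{C}(A)\cap(\mathcal{C}(A)+\mathcal{C}(A))=\{2,4\}\neq\emptyset$.
\item Second condition: the admissible pairs are $(x,y)=(2,4)$ and $(4,2)$. The value $x=3$ admits no $y$, since $\mathcal{C}(A)\cap(\mathcal{C}(A)+3)=\emptyset$. For both pairs the triple intersection is $\{2,3,4\}\cap\{4,5,0\}\cap\{0,1,2\}=\emptyset$.
\item Subindices: $\{0,3\}$ and $\{0,2,4\}$ are both right subfactors relative to $A$. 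Each satisfies $\Dif(A)+B=\mathbb{Z}_6$ and $\Dif(A)\cap(B-B)=\{0\}$. Hence $|G:A|^-=2<3=|G:A|^+$.
\end{itemize}
So both displayed conditions hold, yet $A$ is not right index stable.

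In fact, by your own arguments the two displayed conditions are together equivalent to $|G:A|^+=3$ and say nothing about $|G:A|^-$. Only the ``$\Rightarrow$'' direction of the original line survives.

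The correct characterization is your second fallback, keeping $\mathcal{C}(A)\neq\emptyset$ so that the universal condition is not vacuous. Namely, $|G:A|_r=3$ if and only if
\begin{itemize}
\item $\mathcal{C}(A)\neq\emptyset$,
\item $\mathcal{C}(A)\cap\mathcal{C}(A)x\neq\emptyset$ for every $x\in\mathcal{C}(A)$, and
\item the second displayed condition holds.
\end{itemize}
This is precisely Theorem~\ref{RSFAtheorem}(e) with $m=2$ after normalizing $g_0=1$, and your argument proves it.
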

\begin{problem}
Find an infinite group \(G\) with a subset \(A\) such that:
\begin{enumerate}[a)]
\item all RSFA sequences related to $A$ are infinite and maximal.
\item some RSFA sequences related to $A$ are finite and some of them are infinite (resp. infinite and maximal).
\item the set of all RSFA numbers related to $A$ is equal to $\mathbb{Z}_+$
(resp. $\mathbb{Z}_+\cup\{\aleph_0\}$). Note that it never can be $\mathbb{Z}_+\cup\{0\}$.
\end{enumerate}
\end{problem}
\begin{question}
What happens for the right subindices of $A$ if all the  RSFA sequences related to $A$ are infinite and maximal?
\end{question}
\begin{remark}\label{RSFA}
For calculating the RSFA numbers and subindices of \(A\) (specially in countable groups),
one can choose \(g_0=1\) without lose of the generality. Also, for choosing \(g_1\)'s (from the symmetric subset \(\mathcal{C}(A)\) if \(g_0=1\)),
when it takes the value \(x\) of \(\mathcal{C}(A)\), it no longer needs to take the value \(x^{-1}\) if
 \(\mathcal{C}(A)\) is a normal subset of \(G\) (i.e.,  \(g\mathcal{C}(A)=\mathcal{C}(A)g\), for all $g\in G$, or equivalently $A$ is a fixed subset
 of every inner automorphism of $G$). For if \(\{g_n\}_{n\geq 0}\) is a RSFA sequence and \(g_n'=g_ng_0^{-1}\), then
 \(\{g_n'\}_{n\geq 0}\) is also a RSFA sequence such that \(g'_0=1\), with the same length as \(\{g_n\}_{n\geq 0}\), and with
 the same maximally status. Now, if \(\{g_n\}_{n\geq 0}\) is a RSFA sequence and \(\mathcal{C}(A)\) is normal,
 then \(\{g_n^{-1}\}_{n\geq 0}\) is also a RSFA sequence with the same length and
 the same maximally status.
 This fact reduces the number of calculations considerably in practice. Indeed, for determining the RSFA numbers
  of \(A\) we only need the lengthes of all sequences \(\{g_n\}_{n\geq 0}\)
 of elements of \(G\) such that:\\
 (1) \(g_0=1\);\\
 (2) \(g_{n+1}\in G\setminus A^{-1}Ag_0\cup \ldots \cup A^{-1}Ag_n(=\mathcal{C}(A)g_0\cap \ldots\cap \mathcal{C}(A)g_n)\);\\
 (3) \(\cup_{n\geq 0} A^{-1}Ag_n=G\),\\
 if exist (and can be with the restriction on \(g_1\) as mentioned above).  Such sequences are enough
for calculating right subindices if \(G\) is countable.
\end{remark}
\begin{example}\label{square}
We know $|\mathbb{Z}:\{0,1,4,9,\ldots\}|=2$ (i.e., the set of all square integers is index stable
with index 2). Because  $\{0,1,4,9,\ldots\}-\{0,1,4,9,\ldots\}=\mathbb{Z}\setminus(4\mathbb{Z}+2)$,
$\mathcal{C}(\{0,1,4,9,\ldots\})=4\mathbb{Z}+2$, and $\mathcal{C}(\{0,1,4,9,\ldots\})+\mathcal{C}(\{0,1,4,9,\ldots\})=4\mathbb{Z}$,
thus the Corollary~\ref{RSFAProperties} implies the claim.

 Now, put $A=\{1,4,9,\ldots\}=\{0,1,4,9,\ldots\}\setminus\{0\}$ (the
set of all positive squares). We know it is non-index stable (see the proof of \cite[Lemma 3.17]{Hooshmand2020}),
but the values of its upper and lower subindices is unknown.
As a very more strong work, we want to calculate $\SubF^1(\mathbb{Z}:\{1,4,9,\ldots\})$. For calculating it we use the RSFA algorithm
for infinite groups as mentioned before. Firs one can check that
$\mathcal{C}(A)=(4\mathbb{Z}+2)\cup \{\pm1 , \pm4\}$ (also see Lemma~\ref{perfect square}). Hence, due to the above remark, we can choose
$g_0=0$ and $g_1\in \{ 1,4, 4t_1+2\}$, for some $t_1\geq 0$, in the RSFA. Thus we have the following cases:\\
\underline{$g_1=1$}. we observe that $g_2\in \mathcal{C}(A)\cap 1+ \mathcal{C}(A)=\{2,-1\}$,
and we get the subfactors $\{0,1,2\}$ and
$\{0,1,-1\}=-1+\{0,1,2\}$.\\
\underline{$g_1=4$}. we have $g_2\in \mathcal{C}(A)\cap 4+ \mathcal{C}(A)=4\mathbb{Z}+2$, and so
$g_2=4t_2+2$, for some integer $t_2$. Since $g_3\in 4\mathbb{Z}+2\cap 4t_2+2+ \mathcal{C}(A)=\{4t_2-2, 4t_2+6\}$,
we have two choices for $g_3$ that give us the subfactors $\{0,4,4t_2+2, 4t_2-2\}$ and $\{0,4,4t_2+2, 4t_2+6\}$.\\
\underline{$g_1=4t_1+2$} ($t_1\geq 0$). in this case
$$g_2\in \mathcal{C}(A)\cap 4t_1+2+ \mathcal{C}(A)=\left\{\begin{array}{ll} \{-2,6,1,-4,4\} &  \; t_1=0\\
\{4t_1-2, 4t_1+6,-4,4\}\; & \;\;\;\; t_1>0
\end{array}
\right.$$
Thus we have the five subcases $g_2=1$, $g_2=4t_1-2\geq -2$, $g_2=4t_1+6\geq 6$,$g_2=-4$, or $g_2=4$  that give us the subfactors as follows:
$$
\{0,1,2\}, \{0,2,4\}, \{0,2,-2,-4\}, \{0,2,-2,4\}, \{0,2,-2,6\}, \{0, 4t_1+2, 4t_1-2,-4\}, \{0, 4t_1+2, 4t_1+6,4\}.
$$

Therefore
$$
\SubF^1(\mathbb{Z}:\{1,4,9,\ldots\})=\Big\{ \{0,1,2\}, \{0,1,-1\},\{0,2,4\}, \{0,2,-2,-4\},
$$

$$
 \{0,2,-2 ,4\},\{0,2,-2,6\}, \{0, 4t_1+2, 4t_1-2,-4\}, \{0, 4t_1+2, 4t_1+6,4\},
$$

$$\{0,4,4t_2+2, 4t_2-2\},\{0,4,4t_2+2, 4t_2+6\}\; : \; t_1\in \mathbb{Z}_+ , t_2\in \mathbb{Z} \Big\}$$

Hence

$|\mathbb{Z}:\{1,4,9,\ldots\}|^-=3$ and $|\mathbb{Z}:\{1,4,9,\ldots\}|^+=4$. Therefore
$\mathrm{pack}_\mathbb{Z}(\{1,4,9,\ldots\})=4$.

As another interesting example, let $G=\mathbb{Z}$ be the additive group of integers and $A=3\mathbb{Z}$.
Then $\mathcal{C}(A)=3\mathbb{Z}+1\cup 3\mathbb{Z}+2$, $\mathcal{C}(A)\cap \mathcal{C}(A)+\mathcal{C}(A)=3\mathbb{Z}+2$,
 and $\mathcal{C}(A)+x$ is equal to $3\mathbb{Z}\cup 3\mathbb{Z}+1$
or $3\mathbb{Z}\cup 3\mathbb{Z}+2$, for every $x\in \mathcal{C}(A)$. Thus $\mathcal{C}(A)\cap \mathcal{C}(A)+x$ is
either $3\mathbb{Z}+1$ or $3\mathbb{Z}+2$, and so $\mathcal{C}(A)\cap \mathcal{C}(A)+x\cap \mathcal{C}(A)+y=\emptyset$,
for all $x\in \mathcal{C}(A)$ and $y\in \mathcal{C}(A)\cap \mathcal{C}(A)+x$. Therefore $|\mathbb{Z}:3\mathbb{Z}|=3$
as we expected.
\end{example}

\subsection{Sub-index properties for countable groups}
Index stability of finite groups and their subsets has been investigated in \cite{Hooshmand2023, HooshmandYousefian}.
In particular, all finite index-stable groups (right, left, and two-sided) have been completely characterized,
and it has been shown that there are only \(14\) such groups. Consequently, in studying index stability beyond the finite case,
attention naturally shifts to countably infinite groups.

The first problem concerning subindices in a countably infinite group is to determine whether a given subindex
is a positive integer or equal to \(\aleph_{0}\). For this class of infinite groups, we obtain the following results,
which are also relevant in the contexts of the integers and the rational numbers.

The next proposition follows immediately from Theorem~\ref{BasicTheorem},
Corollary~\ref{RSFAProperties}, and Corollary~\ref{CountableCor}.

\begin{proposition}\label{Countable groups}
Let $G$ be a countably infinite group and $A$ a given subset of $G$. \\
\begin{enumerate}[a)]
\item If  \(|G:\langle A\rangle|\) is not finite, then \(A\) is index stable and \(|G:A|=\aleph_0\).
\item All sub-factors (relative to \(A\)) can be obtained by the RSFA and so RSFA is a complete algorithm for \(G\).
\item If $\Dif(A)$ is not right syndetic then $A$ is right index stable and $|G:A|_r=|G|=\aleph_0$,
moreover, the union of $A$ with any finite subset $F$ is also
right index stable with the infinite index if  \(f^{-1}Af\subseteq A\) for every \(f\in F\), and if all elements of \(F\)
commute with each other.
\item If there is an infinite subset $B$ such that $A^{-1}A\cap BB^{-1}=\{1\}$, then \(|G:A|^+=\aleph_0\).
\end{enumerate}
\end{proposition}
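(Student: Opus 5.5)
The four parts can be handled one at a time, since each is a short consequence of results already established. Throughout, $G$ is countably infinite, so $|G|=\aleph_0$, and every subfactor $B\in\SubF_r(G:A)$ satisfies $|B|\le\aleph_0$.

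\textbf{Part (a).} This is exactly Corollary~\ref{CountableCor}. Equivalently, if $|G:\langle A\rangle|$ is infinite, then $|G:\langle A\rangle|=\aleph_0=|G|$, and Theorem~\ref{BasicTheorem}(g) gives index stability with $|G:A|=|G|=\aleph_0$.

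\textbf{Part (b).} Every subset of $G$ is countable, so every right subfactor is countable. Lemma~\ref{ComlpeteRSFA} then says each of them is the image of a maximal RSFA sequence, so the RSFA is complete for $G$.

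\textbf{Part (c).} If $\Dif(A)$ is not right syndetic, Corollary~\ref{InfiniteSunindexCriteria2}(a) gives $|G:A|^-\ge\aleph_0$. Combined with $|G:A|^+\le|G|=\aleph_0$, we get $|G:A|^-=|G:A|^+=\aleph_0$, so $A$ is right index stable with right index $\aleph_0$.

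For the union $A\cup F$, assume the conditions on $F$: $f^{-1}Af\subseteq A$ for all $f\in F$, and the elements of $F$ pairwise commute. By Lemma~\ref{DifferenceSyndeticProperty}(b), $\Dif(A\cup F)$ is right syndetic if and only if $\Dif(A)$ is. Hence $\Dif(A\cup F)$ is not right syndetic either, and the same argument applies to $A\cup F$.

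The only point needing care here is the direction of the lemma. We use the implication ``$\Dif(A\cup F)$ syndetic $\Rightarrow$ $\Dif(A)$ syndetic'', in contrapositive form, and this is precisely the nontrivial direction proved there.

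\textbf{Part (d).} Suppose $B$ is infinite with $A^{-1}A\cap BB^{-1}=\{1\}$. This condition is equivalent to $AB=A\cdot B$, so
\[
|G:A|^+=\sup\{|B'| : AB'=A\cdot B'\}\ge|B|\ge\aleph_0.
\]
On the other hand $|G:A|^+\le|G|=\aleph_0$, so $|G:A|^+=\aleph_0$.

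Nothing here is a real obstacle. The proof is a bookkeeping check that each cited result applies in the countable setting. The step deserving the most attention is the union clause in (c), where the hypotheses of Lemma~\ref{DifferenceSyndeticProperty}(b) must match exactly those of the proposition.
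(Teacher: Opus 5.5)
Your proposal is correct and follows the paper's approach. The paper only states that the proposition follows immediately from earlier results, and you carry out exactly those reductions: Corollary~\ref{CountableCor} for (a), Lemma~\ref{ComlpeteRSFA} for (b), Corollary~\ref{InfiniteSunindexCriteria2}(a) with $|G:A|^+\le\aleph_0$ and Lemma~\ref{DifferenceSyndeticProperty}(b) for (c), and the description of $|G:A|^+$ as a supremum over direct products $AB=A\cdot B$ for (d), with citations that are more precise than the paper's own list (Theorem~\ref{BasicTheorem}, Corollary~\ref{RSFAProperties}, Corollary~\ref{CountableCor}).
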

\begin{corollary}\label{Cor Countable groups}
Let $G$ be countably infinite group such that all proper subgroups are of infinite index (e.g, $\mathbb{Q}$). For
checking the right index stability of $G$ it is enough to only consider generating subsets $A$ such that
$\Dif(A)$ is right syndetic in $G$  but not a subgroup.
\end{corollary}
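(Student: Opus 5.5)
We need to prove the corollary: in a countably infinite group $G$ all of whose proper subgroups have infinite index, checking right index stability of $G$ reduces to generating subsets $A$ with $\Dif(A)$ right syndetic but not a subgroup. The plan is to show that every subset $A$ outside this class is automatically right index stable, by splitting into cases according to $\langle A\rangle$ and $\Dif(A)$.

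\emph{Case 1: $A$ is not a generating set, i.e. $\langle A\rangle\neq G$.} By hypothesis, $|G:\langle A\rangle|$ is infinite. Proposition~\ref{Countable groups}(a) then gives that $A$ is index stable with $|G:A|=\aleph_0$, and in particular right index stable. The empty set and singletons are trivially index stable; note that for $A=\emptyset$ or a singleton, $\langle A\rangle$ is proper or trivial unless $G$ is cyclic.

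\emph{Case 2: $\Dif(A)$ is not right syndetic.} Proposition~\ref{Countable groups}(c) gives directly that $A$ is right index stable with $|G:A|_r=\aleph_0$.

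\emph{Case 3: $\Dif(A)$ is a subgroup.} Theorem~\ref{BasicTheorem}(c) gives that $A$ is right index stable with $|G:A|_r=|G:\Dif(A)|$.

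Thus the only subsets that can fail right index stability are those with $\langle A\rangle=G$, $\Dif(A)$ right syndetic, and $\Dif(A)\not\leq G$. By Definition~\ref{BasicDefinition}(c), $G$ is right index stable if and only if all these remaining subsets are right index stable, which is exactly the claim.

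One could sharpen Case 3 by noting that if $\Dif(A)\le G$ then $\Dif(A)=\langle a^{-1}A\rangle=\langle A^{-1}A\rangle$. Combined with the hypothesis, this forces $\Dif(A)=G$ (index 1) or infinite index. That refinement is not needed for the reduction itself.

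The only mild obstacle is bookkeeping. Case 1 must rely on the hypothesis that proper subgroups have infinite index, because Proposition~\ref{Countable groups}(a) needs $|G:\langle A\rangle|$ to be non-finite. One should also remark that $\mathbb{Q}$ satisfies the hypothesis: a proper subgroup $H$ of finite index $n$ would give $n\mathbb{Q}=\mathbb{Q}\subseteq H$ by divisibility, a contradiction. Finally, the case analysis covers all subsets, since the three cases handle exactly the complement of the stated class.
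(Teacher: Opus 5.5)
Your proof is correct and follows the route the paper intends. The paper states the corollary without a separate proof, as a read-off from Proposition~\ref{Countable groups}(a) (non-generating $A$ have $\langle A\rangle$ of infinite index, hence index $\aleph_0$), Proposition~\ref{Countable groups}(c) (non-syndetic $\Dif(A)$ gives right index $\aleph_0$), and Theorem~\ref{BasicTheorem}(c) (the case $\Dif(A)\leq G$). Your aside about singletons is unnecessary: such $A$ have $\Dif(A)=\{1\}\leq G$ and already fall under your Case 3, and $G$ cannot be cyclic under the hypothesis in any case.
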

\begin{example}
The set $\{n!: n=0,1,2,3,\cdots \}$ is index stable with index $\aleph_0$ in $(\mathbb{Z},+)$.
Because its difference set is non-syndetic (also see Theorem~\ref{limit}). Also,
$|\mathbb{Z}:\{n!: n\geq n_0 \}|=|\mathbb{Z}:\{n_0!,(n_0+1)!,(n_0+2)!, \ldots \}|=\aleph_0$, for every fixed integer $n_0\geq 0$.
We do not have similar property for $\{n_0^2,(n_0+1)^2,(n_0+2)^2, \ldots \}$ (as we see later).
\end{example}
We have also an interesting result that says if no finitely many right translates of $\Dif(A)$ covers \(G\),
then there is infinitely many right disjoint translates of $A$ itself. In other words, if $A^{-1}A$ fails to be syndetic, then
A admits an infinite packing by right translates.
\begin{corollary}
Let $A$ be a subset of the countably infinite group $G$. If $\Dif_\ell(A)$ is not right syndetic, then
there is a (maximal) countably infinite subset $B=\{b_1,b_2,\cdots, b_n,\cdots \}$ such that $Ab_1, Ab_2,\cdots, Ab_n,\cdots$ are disjoint.
Hence, every maximal solution $X$ of the equation $\Dif_\ell(A)\cap \Dif_r(X)=\{1\}$ is countably infinite.
\end{corollary}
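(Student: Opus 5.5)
The plan is to combine Proposition~\ref{Countable groups}(c) with the characterization of right subfactors from (\ref{SubfactorCondition}) and Lemma~\ref{ComlpeteRSFA}.

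First, I would show that a packing exists by building the sequence directly, in the spirit of the RSFA. Suppose $\Dif_\ell(A)=A^{-1}A$ is not right syndetic. Then for every finite set $\{b_1,\dots,b_n\}$ we have $A^{-1}A\{b_1,\dots,b_n\}\neq G$. Hence we may always choose
\[
b_{n+1}\notin A^{-1}Ab_1\cup\cdots\cup A^{-1}Ab_n ,
\]
which is exactly the union version of the RSFA. By the basic equivalence stated before the RSFA, $A(\{b_1,\dots,b_n\}\cup\{b_{n+1}\})$ is direct at each stage. Therefore the translates $Ab_1,Ab_2,\dots$ are pairwise disjoint, and the construction never terminates. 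This gives an infinite set $B$ with $A^{-1}A\cap BB^{-1}=\{1\}$.

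To obtain a \emph{maximal} such $B$, I would run the RSFA with a fixed enumeration $G=\{x_0,x_1,\dots\}$. At each stage, among the admissible choices (the complement of $\bigcup_{i\le n}A^{-1}Ab_i$, which is nonempty by non-syndeticity), take the one with least index. Every $x_k$ is then eventually either chosen or covered by some $A^{-1}Ab_i$. So $A^{-1}AB=G$, the RSFA sequence is maximal, and Lemma~\ref{ComlpeteRSFA} shows $B\in\SubF_r(G:A)$. Since $G$ is countable, $B$ is countably infinite.

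For the second sentence, let $X$ be any maximal solution of $\Dif_\ell(A)\cap\Dif_r(X)=\{1\}$. By (\ref{SubfactorCondition}) and the maximality discussion after it, $X$ is a right subfactor related to $A$, so $A^{-1}AX=G$. If $X$ were finite, this would say $A^{-1}A$ is right syndetic, a contradiction. Hence $X$ is infinite, and $|X|\le|G|=\aleph_0$.

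The main obstacle is the identification of maximal solutions of the equation with right subfactors, i.e.\ deriving $A^{-1}AX=G$ from maximality alone. This follows from the RSFA criterion: if some $x\notin A^{-1}AX$, then $X\cup\{x\}$ still gives a direct product, contradicting maximality. This step needs a brief check but is not hard.
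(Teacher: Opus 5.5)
Your proof is correct and follows essentially the route the paper intends. The corollary is read off from Proposition~\ref{Countable groups}(c), whose content is your observation: a right subfactor $X$ satisfies $A^{-1}AX=G$, so it cannot be finite when $\Dif_\ell(A)$ is not right syndetic, and hence it is countably infinite in a countable group. Your least-index RSFA construction is a valid explicit substitute for citing the general existence of right subfactors; it uses the implicit assumption $A\neq\emptyset$, so that $b\in A^{-1}Ab$ and the chosen elements are distinct.
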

The following table provides a valuable tool for assessing the (right) index stability of a subset (or sequence) within a countably infinite group. A comparable table for finite groups is presented in~\cite{Hooshmand2023}. The table relies on established criteria for $A$ and $\Dif(A)$, enabling the evaluation of properties such as infiniteness, syndeticity, left factorness, subgroupness, and related characteristics.
It also should be very usefull for studing the index stabilty of subsets and sequences of integer and rational numbers.
\\

\begin{center}
\begin{table}[h]
\begin{tabular}{|p{4.9cm}|p{4.1cm}|p{5.7cm}|}
 \hline
 \multicolumn{3}{|c|}{Main classes of index/right index stable subsets of \textbf{countably infinite groups}} \\
 \hline
 Class Description & index/right index & Explanation/conditions \\
 \hline
 $|A|<|A^c|$ (finite) & $|G:A|=\aleph_0$ & index stable, the converse is not true \\
 \hline
 $|A|>|A^c|$ (co-finite) & $|G:A|=1$ & index stable, the converse is also true \\
 \hline
 $|A|=|A^c|$ (cardinally balanced) &
 $\begin{array}{c}
   |G:A|_r=|G: \Dif(A)| \\
   |G:A|_r=\aleph_0 \\
   |G:A|=\aleph_0
 \end{array}$
 &
 $\begin{array}{c}
 \mbox{if } \Dif(A)\leq G \\
  \mbox{if } \Dif(A) \mbox{ is not right syndetic} \\
  \mbox{if } |G:\langle A\rangle| \mbox{ is infinite}
 \end{array}$ \\
 \hline
\end{tabular}
 \caption{}
  \label{table1}
\end{table}
\end{center}
\subsection{Index pervasive infinite groups}
Motivated by a property of the subindices of subsets of finite groups we lead to some
related concepts for finite and infinite groups.
\begin{definition}
We call an infinite group $G$ right (resp. infinitely right) upper index pervasive if
for every cardinal number $1\leq \kappa\leq |G|$ (resp. $\aleph_0\leq \kappa\leq |G|$) there exists a subset $A$
such that $|G:A|^+=\kappa$ (the concepts ``left (resp. infinitely left) upper/lower index pervasive'',
are analogously defined).
Also, $G$ is called right index (resp. infinitely right index) pervasive if for every $1\leq \kappa\leq |G|$
(resp. $\aleph_0\leq \kappa\leq |G|$) there exists
a right index stable subset with the right index $\kappa$ (the other cases are defined analogously).
\end{definition}
The following theorem is a immediate result of the corollary of the main theorem of \cite{Lyaskovska2007} and Lemma~\ref{covering and packing}.
\begin{theorem}\label{Indexpervasive}
Let \(G\) be an infinite abelian group. Then, for every cardinal number $2,3\neq \kappa \leq |G|$ there is a subset of \(G\)
with the upper index $\kappa$. For the case $\kappa=2$ (resp. $\kappa=3$) also the property is held if and only if
$G$ is not isomorphic to $\bigoplus \mathbb{Z}_3$ (resp. $G$ is not isomorphic to $\bigoplus \mathbb{Z}_2$ or to $\mathbb{Z}_4 \oplus (\bigoplus \mathbb{Z}_2)$.
\end{theorem}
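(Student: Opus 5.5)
The plan is to translate the statement about upper subindices into one about packing numbers via Lemma~\ref{covering and packing}, and then read it off from the corollary of the main theorem of \cite{Lyaskovska2007}. Lemma~\ref{covering and packing} gives $|G:A|_+=\mathrm{pack}_G(A)$ for every $A\subseteq G$. Since $G$ is abelian, left and right translates coincide, so $\mathrm{pack}_G(A)$ is also the right packing number. It also equals $\sup\{|B|: AB=A\cdot B\}=|G:A|^+$ by the description of $|G:A|^+$ after its definition, so left and right upper subindices agree. Hence the statement is equivalent to the following: for a cardinal $\kappa\le|G|$, some $A\subseteq G$ has $\mathrm{pack}_G(A)=\kappa$, with the stated exceptions at $\kappa=2,3$.

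Next I would invoke Lyaskovska's result in the packing form. For an infinite abelian group $G$, every cardinal $1\le\kappa\le|G|$ other than $2,3$ is the packing number of some subset. The value $2$ is attained if and only if $G\not\cong\bigoplus\mathbb{Z}_3$. The value $3$ is attained if and only if $G$ is not isomorphic to $\bigoplus\mathbb{Z}_2$ or to $\mathbb{Z}_4\oplus\bigoplus\mathbb{Z}_2$.

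The trivial and infinite cases need only a check against the conventions. For $\kappa=1$ we take $A=G$. For infinite $\kappa$ the packing number is defined as a supremum, which matches the supremum in $|G:A|^+$. The example of Banakh--Lyaskovska in (e1) already realizes $\aleph_0$ as a non-attained supremum, so it is consistent with this convention.

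The main obstacle is making sure the cited packing-number definition agrees exactly with $\mathrm{pack}_G$ as defined here: a supremum of sizes of families of pairwise disjoint translates, not a maximum and not a fractional variant. In particular, for limit cardinals $\kappa$ the realizing set may have no packing of size exactly $\kappa$. In that case I would argue directly that the supremum over all direct products $AB$ is still $\kappa$, using the third equality in the proof of Lemma~\ref{covering and packing}, that every $B$ with $AB$ direct lies in a subfactor. That step makes the supremum over subfactors equal to the supremum over all packings, so no size is lost.
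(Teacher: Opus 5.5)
Your proposal is correct and takes essentially the paper's own route: the paper obtains this theorem immediately from the corollary of Lyaskovska's main theorem on packing indices in infinite abelian groups, combined with Lemma~\ref{covering and packing} ($|G:A|_+=\mathrm{pack}_G(A)$), and commutativity identifies the left and right upper subindices. Your extra discussion of attained versus non-attained suprema is harmless but not needed, because $|G:A|^+$ and $\mathrm{pack}_G(A)$ are both defined as suprema and the lemma already equates them.
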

The theorem completely characterizes upper index pervasive infinite abelian groups as follows.
\begin{corollary}\label{...}
An infinite abelian group is \underline{upper index pervasive} if and only if it is not isomorphic to
$\bigoplus \mathbb{Z}_2$, $\bigoplus \mathbb{Z}_3$, and $\mathbb{Z}_4 \oplus (\bigoplus \mathbb{Z}_2)$.
\end{corollary}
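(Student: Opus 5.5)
The corollary should follow directly from Theorem~\ref{Indexpervasive}, by unwinding the definition of upper index pervasiveness cardinal by cardinal. By definition, an infinite abelian group $G$ is (right) upper index pervasive if and only if for every cardinal $1\leq\kappa\leq|G|$ there is a subset $A\subseteq G$ with $|G:A|^+=\kappa$. Since $G$ is abelian, left and right subindices coincide, so there is no left/right distinction to track. I would split the cardinals $\kappa$ into three groups: $\kappa=1$, $\kappa\notin\{1,2,3\}$, and $\kappa\in\{2,3\}$.

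For $\kappa=1$, I would not rely on the theorem, since its statement is phrased for $\kappa\neq 2,3$ and it is cleaner to be explicit. Take $A=G$. Then $\Dif(A)=G\leq G$, so by Theorem~\ref{BasicTheorem}(c) (or Corollary~\ref{RSFAProperties}(b)) we get $|G:G|_r=1$, and in particular $|G:A|^+=1$. For every cardinal $\kappa$ with $4\leq\kappa\leq|G|$, including all infinite $\kappa\leq|G|$, Theorem~\ref{Indexpervasive} directly supplies a subset with upper index $\kappa$, and this holds for every infinite abelian group without exception.

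The only cardinals that can fail are $\kappa=2$ and $\kappa=3$, and here Theorem~\ref{Indexpervasive} gives exact criteria. Upper index $2$ is realized if and only if $G\not\cong\bigoplus\mathbb{Z}_3$. Upper index $3$ is realized if and only if $G\not\cong\bigoplus\mathbb{Z}_2$ and $G\not\cong\mathbb{Z}_4\oplus(\bigoplus\mathbb{Z}_2)$. Hence $G$ is upper index pervasive if and only if both conditions hold, which is exactly the condition that $G$ is isomorphic to none of the three listed groups.

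For the converse direction, I would note that if $G$ is isomorphic to one of these groups, then Theorem~\ref{Indexpervasive} shows that $\kappa=2$ or $\kappa=3$ is not attained, so $G$ is not pervasive. The main point requiring care is interpretive rather than technical. The direct sums $\bigoplus$ must be read as having infinitely many summands (of whatever cardinality makes the group infinite), matching the conventions of \cite{Lyaskovska2007}. With that reading, the corollary is a purely logical consequence and no further calculation is needed.
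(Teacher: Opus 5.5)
Your proposal is correct and follows the paper's route: the corollary is read off from Theorem~\ref{Indexpervasive} cardinal by cardinal, and the three exceptional groups come exactly from the $\kappa=2$ and $\kappa=3$ clauses. Your separate treatment of $\kappa=1$ via $A=G$ and Theorem~\ref{BasicTheorem}(c) is valid but redundant, since $\kappa=1\neq 2,3$ is already covered by the theorem.
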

\textbf{Note.} At Section 3 of this paper, we prove a stronger property for all subgroups of the additive group of real numbers which says they are
\underline{index pervasive}. This can be extended for all subgroups of $(\mathbb{C},+)$ since it is isomorphic to $(\mathbb{R},+)$.\\
\textbf{Project I.} A vast study of index and subindex pervasiveness of infinite/finite (abelian and nonabelian) groups
should be useful for the behavior of subindices of groups. The final aim is characterization or classification of
some types of index pervasive groups (similar to the above corollary).
\\
Regarding the above project some results such as Proposition~\ref{codif} and the following theorem can be useful.
\begin{theorem}[Complete classification for infinite groups with a right index stable subset with index 2]
An infinite group contains a right index stable subset with the right index 2 if and only if
it contains a non-identity element of order opposite 3.
\end{theorem}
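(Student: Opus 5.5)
The plan is to reduce everything to the characterization in Corollary~\ref{RSFAProperties}(b):
\[
|G:A|_r=2 \iff \mathcal{C}(A)\neq\emptyset \ \text{ and } \ \mathcal{C}(A)\cap \mathcal{C}(A)\mathcal{C}(A)=\emptyset .
\]
So the question becomes which nonempty sets can occur as $\mathcal{C}(A)$ and avoid their own product set. We also read ``order opposite 3'' as ``order different from 3''. Two facts about $\mathcal{C}(A)=G\setminus A^{-1}A$ will be used throughout. It is always symmetric, i.e. $\mathcal{C}(A)=\mathcal{C}(A)^{-1}$, because $A^{-1}A$ is. It never contains $1$ when $A\neq\emptyset$. (The empty set is excluded anyway, since its right subfactor is $G$ and its index is $|G|$.)

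\emph{Sufficiency.} Suppose $g\in G$ satisfies $g\neq 1$ and $g^3\neq 1$. Put $F=\{g,g^{-1}\}$. Then $1\notin F=F^{-1}$, and $|F|\le 2<|G|$ since $G$ is infinite. By Proposition~\ref{codif} there is a subset $A\subseteq G$ with $\mathcal{C}(A)=F$. We compute $FF=\{g^2,1,g^{-2}\}$ and check that $F\cap FF=\emptyset$:
\begin{itemize}
\item $g\neq 1$ because $g$ is not the identity;
\item $g=g^2$ would force $g=1$, and $g=g^{-2}$ would force $g^3=1$, both excluded;
\item the conditions for $g^{-1}$ are the inverses of these, so they also hold.
\end{itemize}
Hence $\mathcal{C}(A)\cap\mathcal{C}(A)\mathcal{C}(A)=\emptyset$ with $\mathcal{C}(A)\neq\emptyset$, and Corollary~\ref{RSFAProperties}(b) gives $|G:A|_r=2$.

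\emph{Necessity.} Suppose every non-identity element of $G$ has order $3$, and suppose $A$ is right index stable with $|G:A|_r=2$. By Corollary~\ref{RSFAProperties}(b) we may pick some $x\in\mathcal{C}(A)$. By symmetry $x^{-1}\in\mathcal{C}(A)$ as well. Then
\[
x^{-1}x^{-1}=x^{-2}=x\in \mathcal{C}(A)\cap\mathcal{C}(A)\mathcal{C}(A),
\]
using $x^3=1$. This contradicts the criterion in Corollary~\ref{RSFAProperties}(b), so no such $A$ exists.

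The only nontrivial ingredient is the realizability step, namely producing $A$ with a prescribed co-difference set. This is exactly Proposition~\ref{codif}, and its hypothesis $|F|<|G|$ holds trivially here. What remains is to confirm that Corollary~\ref{RSFAProperties}(b) is a genuine equivalence for infinite $G$. It is: for $x\in\mathcal{C}(A)$ and $g_0=1$, the condition $\mathcal{C}(A)\cap\mathcal{C}(A)x=\emptyset$ forces every RSFA sequence of $A$ to terminate with $N=1$. Every right subfactor is then the image of a maximal RSFA sequence, which has exactly two elements.
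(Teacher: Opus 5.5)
Your proof is correct and is essentially the paper's argument. For sufficiency you use Proposition~\ref{codif} with $F=\{g,g^{-1}\}$ and check the product-free criterion $F\cap FF=\emptyset$ of Corollary~\ref{RSFAProperties}(b), where the paper instead lists $\SubF^1_r(G:A)$ explicitly; for necessity your identity $x=x^{-2}\in\mathcal{C}(A)\mathcal{C}(A)$ is the paper's obstruction that an order-$3$ element $x\in\mathcal{C}(A)$ makes $\{1,x,x^{-1}\}$ a subgroup meeting $A^{-1}A$ trivially, so $A\{1,x,x^{-1}\}=A\cdot\{1,x,x^{-1}\}$ and $|G:A|^+\geq 3$ (this is also the direction of the corollary your necessity step relies on, whereas your closing self-check only verified the direction used for sufficiency).
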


\begin{proof}
Let $G$ be an infinite group an pick a non-identity element with $o(x)\neq 3$. Theorem~\ref{codif} implies there is a subset $A$ such that
$\mathcal C_\ell(A)=\{x,x^{-1}\}$.Thus
$$
B\in\SubF^1_r(G:A)=\left\{\begin{array}{ll} \{\{1,x\}\}; &  \; O(x)=2\\
\{\{1,x\},\{1,x^{-1}\}\}; &  \; O(x)>3
\end{array}
\right.
$$
Thus $A$ is right index stable and $|G:A|_r=2$
(equivalently $|G:A|^+=2$, see Theorem 3.12(f) of \cite{Hooshmand2020}, also it can be obtained by Lemma 3.14 of \cite{Hooshmand2020}).
Conversely, if $|G:A|^+=2$ for some $A\subseteq G$, then $A\{1,x\}=A\cdot \{1,x\}$ for some $x\neq 1$,
and so $A^{-1}A\cap \{\{1,x,x^{-1}\}=\{1\}\}$. Hence, we conclude that $O(x)\neq 3$, since otherwise $\{1,x,x^{-1}\}$
is a subgroup and so $|G:A|^+>2$.
\end{proof}
\begin{remark}
Since in every finite group $G$, all the subindices belongs to $\{1,\cdots , \lfloor \frac{|G|}{2}\rfloor\}\cup\{ |G|\}$,
for finite groups the item $1\leq \kappa\leq |G|$ should be replaced by $\kappa\in \{1,\cdots , \lfloor \frac{|G|}{2}\rfloor\}\cup\{ |G|\}$.
It is obvious that every group of order $\leq 5$ is index pervasive.
One may check that every group of order $6,7$ is also index pervasive.
Hence, we have an example that is index pervasive but not index stable
($\{ |\mathbb{Z}_6:A| : A\;\mbox{is an index stable subset}\}=\{ 1,2,3,6\}$).
Also,
$$
\{ |\mathbb{Z}_8:A| : A\;\mbox{is an index stable subset}\}=\{ 1,2,4,8\}\; , \;
\{ |\mathbb{Z}_8:A|^\pm : A\subseteq G\}=\{ 1,2,3,4,8\}.
$$
Hence it is sub-index but not index pervasive. Note that other groups of order $8$
are index stable but not (any type of) index pervasive (there is no any subset with sub-index $3$).
\end{remark}
\section{Index stability, sub-indices, and $\infty$-difference length of number sets and groups: Interactions to the additive number theory and combinatorics}
One of the most important aspects and applications of the theory of
subindices and subfactors is actually their interactions to the additive number theory and combinatiorics
(e.g., packing numbers, covering numbers, aditive basis, and diameters respect to subsets).
In this regard, we correspond subindices and the length of its infinite difference to any set or sequence of numbers,
 which gives us good information about its behavior related the mentioned aspects.\\
  \textbf{Convention.} Let $\{a_n\}$ be a sequence of elements of a group of numbers. By $|G:\{ a_n\}|^\pm$ we mean
$|G:A|^\pm$ where $A$ is the image of the sequence $a_n$ (i.e., $A=\{a_1,a_2,a_3, \cdots\}$).
Similar conceptions for the sequences can be used according other notations and definitions.\\
\textbf{Index stability of the main additive groups of numbers and subsets.}
Although we showed in response to some open problems in the first section of the paper that every infinite group is not index stable,
 we now consider groups of numbers directly by proving some general theorems and constructing some examples in order to obtain more tools.
We study the index stability of subsets of the groups of numbers, especially integers, rationales, reals, and complex numbers.
The main challenging subject is: determining subindices of arbitrary subsets and their index stability status.\\
It is enough to consider the additive group of real numbers and its subgroups
by the next explanation. \\
Let $G_1,G_2$ be two groups, $f$ a homomorphism from $G_1$ in to $G_2$, and $A,B\subseteq G_1$. It is obvious that
$f(Dif_\ell(A))=Dif_\ell(f(A))$. Hence $G_1=Dif_\ell(A)B$ implies $f(G_1)=Dif_\ell(f(A))f(B)$.
Also, if $f$
is a monomorphism, then $AB=A\cdot B$ if and only if $f(A)f(B)=f(A)\cdot f(B)$. Therefore
$f(B)\in\SubF_r(f(G_1):f(A))$ if and only if $B\in\SubF_r(G_1:A)$. Hence
$|G_1:A|^\pm=|f(G_1):f(A)|^\pm$, and so $A$ (resp.  $G_1$) has
the right index stability  if and only if $f(A)$ (resp. $f(G_1)$) is so. In particular, it implies that two isomorphic groups have the same status of
(right, left, and two-sided) index stability.
This truth convinces us that it is enough to study only index stability of subgroups and subsets of real numbers, since
$(\mathbb{C},+)\cong (\mathbb{R},+)$. In \cite{Hooshmand2020}, it is shown that the set of all square numbers is not index stable in $(\mathbb{Z},+)$.
The next lemma constructs a non-index stable subset of $(\mathbb{Q},+)$ (also see \cite{StackNonindexRationals}).
\begin{lemma}\label{rationals}
Consider the additive group of rationale numbers, fix a positive integer $k$, and put
$$
A=A_k:=\bigcup_{n=-\infty}^{+\infty}\Big[n-\frac{1}{k},n+\frac{1}{k}\Big]_\mathbb{Q},
$$
where $[a,b]_\mathbb{Q}$ denotes the intersection of the closed interval $[a,b]$ and $\mathbb{Q}$.
Then $A_k$ is non-index stable for $k=6,7,8$.
\end{lemma}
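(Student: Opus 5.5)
The plan is to reduce everything to a packing problem on the circle $\mathbb{Q}/\mathbb{Z}$ and then read off $|\mathbb{Q}:A_k|^\pm$ via the RSFA (Lemma~\ref{ComlpeteRSFA}; $\mathbb{Q}$ is countable). The reduction has two steps.

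\textbf{Step 1 (difference set).} Since $A_k=\mathbb{Z}+[-\frac1k,\frac1k]_\mathbb{Q}$, I will show
\[
\Dif(A_k)=\mathbb{Z}+\Big[-\tfrac2k,\tfrac2k\Big]_\mathbb{Q}.
\]
It follows that $\mathcal{C}(A_k)$ is the set of rationals whose distance to the nearest integer lies strictly between $\frac2k$ and $\frac12$. This set is symmetric and invariant under $\mathbb{Z}$.

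\textbf{Step 2 (circle reformulation).} Write $\|x\|$ for the distance from $x$ to $\mathbb{Z}$. By (\ref{SubfactorCondition}), $B\in\SubF_r(\mathbb{Q}:A_k)$ if and only if both of the following hold:
\begin{itemize}
\item the images of the elements of $B$ in $\mathbb{Q}/\mathbb{Z}$ are pairwise distinct with $\|b-b'\|>\frac2k$;
\item the set is maximal, i.e.\ every $x\in\mathbb{Q}$ satisfies $\|x-b\|\le \frac2k$ for some $b\in B$.
\end{itemize}
So $|B|$ equals the number of points of a maximal $\frac2k$-separated configuration on the rational circle. If the consecutive gaps are $g_1,\dots,g_m$ (with $\sum g_i=1$), the conditions become:
\begin{itemize}
\item separation: every $g_i>\frac2k$;
\item maximality: every $g_i\le \frac4k$, since a rational point strictly farther than $\frac2k$ from both ends of a gap exists iff the gap exceeds $\frac4k$.
\end{itemize}
Hence $|\mathbb{Q}:A_k|^+$ and $|\mathbb{Q}:A_k|^-$ are the largest and smallest $m$ for which such gaps exist.

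\textbf{Step 3 ($k=7,8$).}
\begin{itemize}
\item For $k=7$, the range is $\frac27<g_i\le\frac47$. The choice $m=3$ with gaps $\frac13$ is admissible. The choice $m=2$ with gaps $\frac12,\frac12$ is also admissible, since $\frac12\le\frac47$. Four gaps are impossible because $4\cdot\frac27>1$. So $|\mathbb{Q}:A_7|^-=2$ and $|\mathbb{Q}:A_7|^+=3$.
\item For $k=8$, the range is $\frac14<g_i\le\frac12$. Three gaps of $\frac13$ and two gaps of $\frac12$ are both admissible, and four gaps are impossible. So the subindices are again $2$ and $3$.
\end{itemize}
In both cases the subindices differ, so $A_k$ is not right index stable. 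Since $\mathbb{Q}$ is abelian and $A_k=-A_k$, the left computation is identical, and $A_k$ is non-index stable.

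\textbf{Step 4 ($k=6$, the main obstacle).} Here the admissible range is $\frac13<g_i\le\frac23$. Two gaps always fit. A three-point configuration needs three gaps summing to $1$ with each gap strictly greater than $\frac13$, which is impossible. So with the sets exactly as written, this case sits precisely on the boundary: the equally spaced configuration $\{0,\frac13,\frac23\}$ has differences $\pm\frac13$, which lie exactly on the boundary of $\Dif(A_6)$.

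The first thing I would do is recheck carefully whether $\frac13\in\Dif(A_6)$ under the paper's convention $[a,b]_\mathbb{Q}$. If $\frac13$ is excluded from $\Dif(A_6)$, which is the case when the defining intervals are read as open (the variant of \cite{StackNonindexRationals}), then:
\begin{itemize}
\item the range becomes $\frac13\le g_i<\frac23$;
\item the subfactor $\{0,\frac13,\frac23\}$ has size $3$;
\item gaps $\frac12,\frac12$ give a maximal subfactor of size $2$;
\end{itemize}
so $|\mathbb{Q}:A_6|^-=2\neq3=|\mathbb{Q}:A_6|^+$.

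Thus the argument for $k=6$ hinges entirely on this endpoint bookkeeping. I expect verifying the correct boundary convention in Step 1 to be the delicate point of the whole proof.
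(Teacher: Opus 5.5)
\textbf{The $k=6$ case of the statement is false, so the proposal can only establish $k=7,8$.}

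Your reduction in Steps 1--3 is correct. For $k=7,8$ it gives $|\mathbb{Q}:A_k|^-=2<3=|\mathbb{Q}:A_k|^+$, which is the paper's conclusion. The paper also uses $\{0,\frac12\}$ as a maximal two-point subfactor. For the bound $|\mathbb{Q}:A_k|^+\ge 3$ it shows that $\{0,\frac5{14}\}$ is not maximal, rather than exhibiting $\{0,\frac13,\frac23\}$.

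Step 4 cannot be repaired by reading the intervals as open. The statement fixes $[a,b]_\mathbb{Q}$ to be the closed interval, so $\pm\frac16\in A_6$ and $\frac13=\frac16-(-\frac16)\in\Dif(A_6)$. Your own gap analysis then settles $k=6$ negatively:
\begin{itemize}
\item three or more gaps, each larger than $\frac13$, cannot sum to $1$;
\item a single point is never maximal;
\item hence every subfactor of $A_6$ has exactly two elements, and $|\mathbb{Q}:A_6|^-=|\mathbb{Q}:A_6|^+=2$.
\end{itemize}
Equivalently, $\mathcal{C}(A_6)=\{x:\|x\|>\frac13\}$ satisfies $\mathcal{C}(A_6)\cap(\mathcal{C}(A_6)+\mathcal{C}(A_6))=\emptyset$, so Corollary~\ref{RSFAProperties}(b) gives $|\mathbb{Q}:A_6|=2$. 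So $A_6$ is index stable, and the lemma is false for $k=6$. The provable statement is non-index stability for $k=7,8$. You should state this outright instead of switching to the open-interval set, which is a different subset of $\mathbb{Q}$.

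The paper's proof fails at exactly this point. It correctly obtains $|\mathbb{Q}:A_k|^-=2$ for $5\le k\le 8$. It then asserts that $(A-A)\cup(\frac5{14}+(A-A))=\bigcup_n[n-\frac2k,n+\frac{5k+28}{14k}]_\mathbb{Q}\neq\mathbb{Q}$ for $6\le k\le 10$. That requires $\frac{5k+28}{14k}<1-\frac2k$, i.e.\ $9k>56$. At $k=6$ the right endpoint $\frac{29}{42}$ exceeds the next left endpoint $\frac23$. The union is therefore all of $\mathbb{Q}$, and $\{0,\frac5{14}\}$ is itself a maximal subfactor, so nothing forces $|\mathbb{Q}:A_6|^+\ge 3$. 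The paper's witnesses cover only $k=7,8$.

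On method, the paper tests two specific configurations, while your circle reformulation describes every subfactor, which is more informative. Carried one step further, for $k\ge 3$ an admissible configuration with $m$ points exists iff $\frac k4\le m<\frac k2$, with equal gaps $\frac1m$ as witnesses. This gives $|\mathbb{Q}:A_k|^-=\lceil k/4\rceil$ and $|\mathbb{Q}:A_k|^+=\lceil k/2\rceil-1$. So $A_k$ is non-index stable exactly for $k\ge 7$, which also answers the problem posed right after the lemma.

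Two small points to tidy:
\begin{itemize}
\item State that a $\frac2k$-separated subset of $\mathbb{Q}/\mathbb{Z}$ is finite, so the gap description really covers all subfactors.
\item Since $\mathbb{Q}$ is abelian, left and right subindices coincide automatically, so the symmetry $A_k=-A_k$ is not needed.
\end{itemize}
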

\begin{proof}
First, we have
$$
A-A=\bigcup_{n=-\infty}^{+\infty}\Big[ n-\frac{2}{k},n+\frac{2}{k}\Big]_\mathbb{Q}.
$$
Note that $\frac{1}{2}\notin A-A$ if and only if $k\geq 5$, and if it is the case then
$$
(A-A)\cup \frac{1}{2}+(A-A)=\bigcup_{n=-\infty}^{+\infty}\Big[ n-\frac{2}{k},n+\frac{2}{k}\Big]_\mathbb{Q}\cup
\Big[ n+\frac{k-4}{2k},n+\frac{k+4}{2k}\Big]_\mathbb{Q}.
$$
This union is the whole $\mathbb{Q}$ if and only if $k\leq 8$. Therefore $|\mathbb{Q}:A_k|^-=2$ for $k=5,6,7,8$.\\
On the other hand $\frac{5}{14}\notin A-A$ if and only if $k\geq 6$, and if it is the case then
$$
(A-A)\cup \frac{5}{14}+(A-A)=\bigcup_{n=-\infty}^{+\infty}\Big[ n-\frac{2}{k},n+\frac{2}{k}\Big]_\mathbb{Q}\cup
\Big[ n+\frac{5k-28}{14k},n+\frac{5k+28}{14k}\Big]_\mathbb{Q}.
$$
Thus for $6\leq k\leq 10$ we have
$$
(A-A)\cup \frac{5}{14}+(A-A)=\bigcup_{n=-\infty}^{+\infty}\Big[ n-\frac{2}{k},n+\frac{5k+28}{14k}\Big]_\mathbb{Q}\neq \mathbb{Q}.
$$

Hence $|\mathbb{Q}:A_k|^+\geq 3$ for $k=6,7,8,9,10$. Therefore $A_k$ is not index stable in rationales for  $k=6,7,8$.
\end{proof}
\begin{problem}
Calculate $|\mathbb{Q}:A_k|^\pm$ for a given positive integer $k$.
\end{problem}

To examine the index stability of the group of real numbers and subgroups, we need the index stability of the product of a family of groups, which, of course, will also be useful for studying other infinite groups. It is worth noting that \underline{in Remark 3.16 \cite{Hooshmand2020}}, before Lemma 3.17,
\underline{the statement} "$B_1\times B_2\in \mbox{SubF}_r(G_1\times G_2:A_1\times A_2)$ if and only if
$B_1\in\mbox{SubF}_r(G_1:A_1)$ and $B_2\in\mbox{SubF}_r(G_2:A_2)$" \underline{is not correct}.
Hence \cite[Theorem~3.19]{Hooshmand2020} \underline{is not valid in general}. The corrected statement is
mentioned in the proof of the next theorem (for the general case). Also, the following theorems and corollaries improve Theorem 3.19 of  \cite{Hooshmand2020}.
\begin{theorem}\label{product inequality}
Let $\{G_i\}_{i\in I}$ be a family of groups and $A_i\subseteq G_i$ for all $i\in I$. Then
$$
|\prod_{i\in I}G_i:\prod_{i\in I}A_i|^-\leq \prod_{i\in I}|G_i:A_i|^-\leq \prod_{i\in I}|G_i:A_i|^+
$$
Moreover, if $I$ is finite, then
$$
|\prod_{i\in I}G_i:\prod_{i\in I}A_i|^-\leq \prod_{i\in I}|G_i:A_i|^-\leq \prod_{i\in I}|G_i:A_i|^+\leq |\prod_{i\in I}G_i:\prod_{i\in I}A_i|^+
$$
\end{theorem}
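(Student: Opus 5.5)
Put $G=\prod_{i\in I}G_i$ and $A=\prod_{i\in I}A_i$; we may assume every $A_i\neq\emptyset$, since otherwise everything is trivial. The key identity is that differences are taken coordinatewise:
\[
\Dif(A)=A^{-1}A=\prod_{i\in I}A_i^{-1}A_i=\prod_{i\in I}\Dif(A_i),
\qquad BB^{-1}\subseteq \prod_{i\in I} B_iB_i^{-1}\ \text{ if } B=\prod_{i\in I} B_i .
\]
The middle inequality $\prod|G_i:A_i|^-\leq\prod|G_i:A_i|^+$ is immediate, since cardinal products are monotone and $|G_i:A_i|^-\leq|G_i:A_i|^+$. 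So the work is in the two outer inequalities.

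For the left inequality I will choose, for each $i$, a right subfactor $B_i\in\SubF_r(G_i:A_i)$ with $|B_i|=|G_i:A_i|^-$; this is possible because the infimum is a minimum. Put $B=\prod B_i$. By (\ref{SubfactorCondition}) we have $A_i^{-1}A_iB_i=G_i$, so $A^{-1}AB=G$ coordinatewise. Also, if $x\in A^{-1}A\cap BB^{-1}$, then each coordinate lies in $A_i^{-1}A_i\cap B_iB_i^{-1}=\{1\}$, so the intersection is $\{1\}$. Hence $B\in\SubF_r(G:A)$ by (\ref{SubfactorCondition}). The point to flag, which is the corrected version of the faulty statement from \cite{Hooshmand2020}, is that this implication goes only one way: a product of subfactors is a subfactor, but a subfactor of $G$ relative to $A$ need not be a product. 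From this, $|G:A|^-\leq|B|=\prod|B_i|=\prod|G_i:A_i|^-$, and this works for arbitrary $I$.

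For the last inequality, with $I$ finite, I will use the description $|G:A|^+=\sup\{|B|: AB=A\cdot B\}$. Take any $B_i\subseteq G_i$ with $A_iB_i=A_i\cdot B_i$, that is, $A_i^{-1}A_i\cap B_iB_i^{-1}=\{1\}$. Then $B=\prod B_i$ satisfies $A^{-1}A\cap BB^{-1}=\{1\}$ by the coordinatewise argument, so $|G:A|^+\geq\prod|B_i|$. Taking suprema gives $\prod_i\sup_{B_i}|B_i|\leq|G:A|^+$, provided the sup of the products equals the product of the sups.

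That identity is where finiteness of $I$ enters, and it is the main (mild) obstacle. For finitely many factors, $\sup_{x\in X,y\in Y}xy=(\sup X)(\sup Y)$ holds for nonempty sets of nonzero cardinals. If both suprema are finite they are attained, since finite sups of integers are maxima. If one is infinite, say $\kappa=\sup X\geq\sup Y=\lambda\geq1$, then $\kappa\lambda=\kappa$, and the products $xy\geq x$ already approach $\kappa$. Induction then handles any finite $I$. For infinite $I$ this identity can fail: with every $\sup=2$ not attained we cannot get a product; in fact maxima are attained in the finite case, but infinite products $2^{|I|}$ of chosen sets are fine. The real failure is that sup-attainment need not hold when some $|G_i:A_i|^+$ is a non-attained limit. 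For this reason I state the right-hand inequality only for finite $I$, as the theorem does.
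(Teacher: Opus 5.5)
Your proof is correct and is essentially the paper's argument. It uses the coordinatewise identity $\Dif\bigl(\prod_i A_i\bigr)=\prod_i\Dif(A_i)$, the one-directional fact that a product of right subfactors (or of sets $B_i$ with $A_iB_i=A_i\cdot B_i$) is a right subfactor (resp.\ gives a direct product) in $\prod_i G_i$, and, for finite $I$, the identity that the supremum of products equals the product of suprema.

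One correction: nonemptiness of the $A_i$ is a genuine hypothesis (implicit in the paper as well), not a harmless reduction. For $G_1=G_2=\mathbb{Z}_2$, $A_1=\emptyset$, $A_2=\mathbb{Z}_2$, the left inequality would read $4\leq 2\cdot 1$. Your closing remarks about infinite $I$ are muddled (for instance, a supremum equal to $2$ is always attained), but they play no role in the proof.
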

\begin{proof}
It can be shown that
$$\Dif_\ell(\prod_{i\in I}A_i)=\prod_{i\in I}\Dif_\ell(A_i),$$
and  so $G_i=\Dif_\ell(A_i)B_i$ where $B_i\subseteq G_i$, for all $i\in I$, implies that
$\prod_{i\in I}G_i=\Dif_\ell(\prod_{i\in I}A_i)(\prod_{i\in I}B_i)$.
Also,
$$
(\prod_{i\in I}A_i)(\prod_{i\in I}B_i)=(\prod_{i\in I}A_i)\cdot (\prod_{i\in I}B_i) \Leftrightarrow A_iB_i=A_i\cdot B_i \mbox{, for all $i\in I$.}
$$
Hence, if  $B_i\in\SubF_r(G_i:A_i)$, for all $i\in I$, then  $\prod_{i\in I}B_i\in\SubF_r(\prod_{i\in I}G_i:\prod_{i\in I}A_i)$.
This requires that the following map is one to one:
$$
\Psi: \prod_{i\in I}\SubF_r(G_i:A_i) \rightarrow \SubF_r(\prod_{i\in I}G_i:\prod_{i\in I}A_i)
$$
$$
\Psi((B_i)_{i\in I})=\prod_{i\in I}B_i
$$
Note that this implies that a copy of the domain of $\Psi$ is a subset of its codomain (not exactly a subset, since a subset of
$\prod_{i\in I}G_i$ does not need to be of the form of a product of subsets). Therefore
$$
|\prod_{i\in I}G_i:\prod_{i\in I}A_i|^-=\inf\{|\mathcal{B}|:\mathcal{B}\in \SubF_r(\prod_{i\in I}G_i:\prod_{i\in I}A_i)\}
\leq \inf\{|\mathcal{B}|:\mathcal{B}\in \Psi(\prod_{i\in I}\SubF_r(G_i:A_i))\}
$$
$$=
\inf\{|\mathcal{B}|:\mathcal{B}\in \{\prod_{i\in I}B_i :  (B_i)_{i\in I}\in \prod_{i\in I}\SubF_r(G_i:A_i)\}\}
$$
$$
= \inf\{|\prod_{i\in I}B_i|:B_i\in \SubF_r(G_i:A_i) \mbox{, for all $i\in I$}\}
= \inf\{\prod_{i\in I}|B_i|:B_i\in \SubF_r(G_i:A_i) \mbox{, for all $i\in I$}\}
$$
$$= \prod_{i\in I}\inf\{|B_i|:B_i\in \SubF_r(G_i:A_i)\}=\prod_{i\in I}|G_i:A_i|^-$$
Now, if $I$ is finite, then
$$
|\prod_{i\in I}G_i:\prod_{i\in I}A_i|^+=\sup\{|\mathcal{B}|:\mathcal{B}\in \SubF_r(\prod_{i\in I}G_i:\prod_{i\in I}A_i)\}
\geq \sup\{|\mathcal{B}|:\mathcal{B}\in \Psi(\prod_{i\in I}\SubF_r(G_i:A_i))\}
$$
$$=
\sup\{|\mathcal{B}|:\mathcal{B}\in \{\prod_{i\in I}B_i :  (B_i)_{i\in I}\in \prod_{i\in I}\SubF_r(G_i:A_i)\}\}
$$
$$
= \sup\{|\prod_{i\in I}B_i|:B_i\in \SubF_r(G_i:A_i) \mbox{, for all $i\in I$}\}
= \sup\{\prod_{i\in I}|B_i|:B_i\in \SubF_r(G_i:A_i) \mbox{, for all $i\in I$}\}
$$
$$= \prod_{i\in I}\sup\{|B_i|:B_i\in \SubF_r(G_i:A_i)\}=\prod_{i\in I}|G_i:A_i|^+$$
It is worth noting that in the above calculations we used the fact that
if $\mathcal{S}_i$ ($i\in I$) is a family of sets whose elements are sets, then
$$\inf\Big\{\prod_{i\in I}|B_i|:B_i\in \mathcal{S}_i \mbox{, for all $i\in I$}\Big\}
=\prod_{i\in I}\inf\{|B_i|:B_i\in \mathcal{S}_i\},$$ but a similar property is valid for the supremum if $I$ is finite.
Now, we can obtain the results, since always $$\prod_{i\in I}|G_i:A_i|^-\leq\prod_{i\in I}|G_i:A_i|^+.$$
\end{proof}
\begin{corollary}
Let $G_1,\ldots,G_n$ be groups and $A_i\subseteq G_i$ for $i=1,2,\ldots,n$. Then
$$
|G_1\times \ldots \times G_n:A_1\times \ldots \times A_n|^-\leq |G_1:A_1|^-\ldots |G_n:A_n|^-\leq |G_1:A_1|^+\ldots |G_n:A_n|^+
$$
$$
\leq |G_1\times \ldots \times G_n:A_1\times \ldots\times A_n|^+
$$
\end{corollary}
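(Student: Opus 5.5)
The corollary is the finite case $I=\{1,\ldots,n\}$ of Theorem~\ref{product inequality}, so the plan is simply to specialise that theorem and, for completeness, to retrace the three links of the chain directly in this setting.

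\emph{Product subfactor.} First I would record the two identities used in the proof of Theorem~\ref{product inequality}:
\[
\Dif(A_1\times\cdots\times A_n)=\Dif(A_1)\times\cdots\times\Dif(A_n),
\]
and the fact that $(A_1\times\cdots\times A_n)(B_1\times\cdots\times B_n)$ is direct if and only if each $A_iB_i$ is direct. Both are checked coordinatewise. Combining them with the characterization (\ref{SubfactorCondition}), any choice of $B_i\in\SubF_r(G_i:A_i)$ gives
\[
B_1\times\cdots\times B_n\in\SubF_r(G_1\times\cdots\times G_n:A_1\times\cdots\times A_n).
\]
Indeed, $\Dif(\prod A_i)\prod B_i=\prod G_i$, and the intersection condition also holds coordinatewise.

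\emph{Bounds.} The set of product subfactors is therefore a subfamily of all subfactors of $\prod A_i$. The lower subindex is an infimum over the larger family, so it is at most the infimum over the product subfactors. That infimum equals
\[
\inf\prod_i|B_i|=\prod_i\inf|B_i|=\prod_i|G_i:A_i|^-,
\]
since each infimum is attained and cardinal multiplication is monotone. Likewise, the upper subindex is at least
\[
\sup\prod_i|B_i|=\prod_i|G_i:A_i|^+.
\]
The middle inequality $\prod|G_i:A_i|^-\le\prod|G_i:A_i|^+$ is termwise, from $|G_i:A_i|^-\le|G_i:A_i|^+$.

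\emph{Main obstacle.} The only delicate point is exchanging the supremum with the finite product of cardinals. I would handle it by induction on $n$, reducing to two factors: $\sup_{x,y}xy=(\sup x)(\sup y)$ for nonempty sets of nonzero cardinals. If both suprema are finite, they are attained and the identity is immediate. If one is infinite, the larger of the two dominates both $xy$ and the product $(\sup x)(\sup y)$, and the identity follows from $\kappa\lambda=\max(\kappa,\lambda)$ for infinite cardinals. Finiteness of $I$ is exactly what makes this exchange valid.
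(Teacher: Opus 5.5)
Your proposal is correct and matches the paper's argument: the corollary is the case $I=\{1,\ldots,n\}$ of Theorem~\ref{product inequality}, obtained by showing $B_1\times\cdots\times B_n\in\SubF_r(\prod G_i:\prod A_i)$ whenever $B_i\in\SubF_r(G_i:A_i)$ and then exchanging $\inf$/$\sup$ with the product, and your two-factor cardinal argument justifies the supremum exchange that the paper only asserts for finite $I$. One caveat applies equally to the paper: your use of (\ref{SubfactorCondition}), and indeed the statement itself, requires every $A_i\neq\emptyset$ (for $G_1=G_2=\mathbb{Z}_2$, $A_1=\emptyset$, $A_2=G_2$ the first inequality reads $4\leq 2$), so this hypothesis should be stated explicitly.
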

The following theorem is one of the most results of the above theorem (compare with
Theorem~\ref{Subgroupsindex}). It is worth noting that here the index set $I$ is arbitrary.
\begin{theorem}\label{family product index stability}
Let $\{G_i\}_{i\in I}$ be a family of groups. If
$\prod_{i\in I}G_i$ is right index stable, then $G_i$ is right index stable for
all $i\in I$.
\end{theorem}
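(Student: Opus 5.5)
The plan is to argue by contraposition and to transfer the subindices of a subset of one factor \emph{exactly} to the full product. This avoids Theorem~\ref{Subgroupsindex}: there the factor $|G:H|$ may be infinite and then cannot be cancelled. It also avoids Theorem~\ref{product inequality}, whose upper inequality is only available for finite $I$. Put $G=\prod_{i\in I}G_i$ and fix $j\in I$. For a subset $A_j\subseteq G_j$ we consider
$$
\mathcal{A}=\prod_{i\in I}A_i,\qquad A_i=G_i \ (i\neq j).
$$
The key claim is that
$$
|G:\mathcal{A}|^-=|G_j:A_j|^-\qquad\text{and}\qquad |G:\mathcal{A}|^+=|G_j:A_j|^+ .
$$
Granting this, if $G_j$ is not right index stable we pick $A_j$ with $|G_j:A_j|^-\neq |G_j:A_j|^+$. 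The corresponding $\mathcal{A}$ is then not right index stable in $G$, which contradicts the hypothesis.

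To prove the claim, I would first record, as in the proof of Theorem~\ref{product inequality}, that
$$
\Dif(\mathcal{A})=\prod_{i\in I}\Dif(A_i)=\Dif(A_j)\times\prod_{i\neq j}G_i .
$$
(We may assume $A_j\neq\emptyset$, since the empty subset has the same behaviour in both groups.) We then use the characterization (\ref{SubfactorCondition}): $\mathcal{B}\in\SubF_r(G:\mathcal{A})$ if and only if $\Dif(\mathcal{A})\cap\mathcal{B}\mathcal{B}^{-1}=\{1\}$ and $\Dif(\mathcal{A})\mathcal{B}=G$. Let $\pi_j$ denote the projection of $G$ onto $G_j$.

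First, injectivity of $\pi_j$ on a subfactor. Let $b,b'\in\mathcal{B}$ with $\pi_j(b)=\pi_j(b')$. Then the $j$-th coordinate of $bb'^{-1}$ equals $1\in\Dif(A_j)$, so $bb'^{-1}\in\Dif(\mathcal{A})\cap\mathcal{B}\mathcal{B}^{-1}=\{1\}$, and hence $b=b'$. Thus $|\pi_j(\mathcal{B})|=|\mathcal{B}|$.

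Second, $\pi_j(\mathcal{B})\in\SubF_r(G_j:A_j)$. If $\pi_j(b)\pi_j(b')^{-1}\in\Dif(A_j)$, then $bb'^{-1}\in\Dif(\mathcal{A})$, so $b=b'$; hence $\Dif(A_j)\cap\pi_j(\mathcal{B})\pi_j(\mathcal{B})^{-1}=\{1\}$. Applying the homomorphism $\pi_j$ to $\Dif(\mathcal{A})\mathcal{B}=G$ gives $\Dif(A_j)\pi_j(\mathcal{B})=G_j$.

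Conversely, if $B_j\in\SubF_r(G_j:A_j)$, then $B_j\times\prod_{i\neq j}\{1\}$ satisfies both conditions of (\ref{SubfactorCondition}) in $G$, so it is a subfactor of the same cardinality. This is the case $B_i=\{1\}\in\SubF_r(G_i:G_i)$ of the map $\Psi$ from Theorem~\ref{product inequality}. Therefore the sets of cardinalities $\{|\mathcal{B}|:\mathcal{B}\in\SubF_r(G:\mathcal{A})\}$ and $\{|B_j|:B_j\in\SubF_r(G_j:A_j)\}$ coincide, and taking the infimum and supremum gives the claim.

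The step I expect to require the most care is the injectivity of $\pi_j$ on subfactors. It is exactly what replaces the false product statement from \cite{Hooshmand2020} corrected above: a subfactor of $G$ need not be a product set, but because every coordinate other than $j$ is absorbed into $\Dif(\mathcal{A})$, only its $j$-th shadow matters. Once injectivity is in place, the rest is bookkeeping, and it holds for arbitrary $I$ because no products of cardinals over $I$ are taken.
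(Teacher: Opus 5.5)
Your proof is correct. It uses the same lifted subset $\mathcal{A}=A_j\times\prod_{i\neq j}G_i$ and the same embedding $B_j\mapsto B_j\times\prod_{i\neq j}\{1\}$ (the map $\Psi$ from Theorem~\ref{product inequality}) as the paper. The difference is that the paper stops at two one-sided inequalities,
\[
|G:\mathcal{A}|^-\leq |G_j:A_j|^- < |G_j:A_j|^+\leq |G:\mathcal{A}|^+ .
\]
Both follow from that embedding alone, since it gives an inclusion of the sets of subfactor cardinalities, and together they already break right index stability of $\mathcal{A}$. Your extra projection step is therefore not needed for the theorem: injectivity of $\pi_j$ on subfactors, and $\pi_j(\mathcal{B})\in\SubF_r(G_j:A_j)$. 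What it buys is the sharper fact that the two cardinality sets coincide, so $|G:\mathcal{A}|^\pm=|G_j:A_j|^\pm$ exactly.

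Your stated reason for avoiding Theorem~\ref{product inequality} is overstated. The paper's argument works for arbitrary $I$, because for $i\neq j$ every element of $\SubF_r(G_i:G_i)$ is a singleton. The supremum of the products therefore reduces to the supremum over the $j$-th coordinate, and no genuine infinite product of cardinals appears.

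One small imprecision: for $A_j=\emptyset$ the subindices are $|G|$ and $|G_j|$, which generally differ. So the empty set only ``behaves the same'' in the sense that it is index stable in both groups. This is harmless, since a non-stable $A_j$ is nonempty.
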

\begin{proof}
If $G_{i_0}$ is not right index stable , for some $i_0\in I$, then there is a non right index stable
subset $\Delta_{i_0}$ of  $G_{i_0}$. For $i\in I$, put
$$
A_i=\left\{\begin{array}{l} \Delta_{i_0} \;\; : \;\; i= i_0    \\
G_i \;\; : \;\; i\neq i_0
\end{array}
\right.
$$
Then
$$
|\prod_{i\in I}G_i:\prod_{i\in I}A_i|^-\leq \prod_{i\in I}|G_i:A_i|^-=|G_{i_0}:\Delta_{i_0}|^-$$ $$
<|G_{i_0}:\Delta_{i_0}|^+=\prod_{i\in I}|G_i:A_i|^+
$$
This requires that $\prod_{i\in I}A_i$ is not right index stable in $\prod_{i\in I}G_i$ (that is a contradiction)
if we have $$|G_{i_0}:\Delta_{i_0}|^+\leq |\prod_{i\in I}G_i:\prod_{i\in I}A_i|^+.$$
But
$$|G_{i_0}:\Delta_{i_0}|^+
= \sup\{\prod_{i\in I}|B_i|:B_i\in \SubF_r(G_i:A_i) \mbox{, for all $i\in I$}\}
$$
$$
= \sup\{|\prod_{i\in I}B_i|:B_i\in \SubF_r(G_i:A_i) \mbox{, for all $i\in I$}\}\leq
|\prod_{i\in I}G_i:\prod_{i\in I}A_i|^+
$$
Note that the last inequality is obtained by the method is used in the proof of the above theorem. Therefore the proof is complete.
\end{proof}
\begin{corollary}\label{product index stability}
If the  direct product $\prod_{i\in I}G_i$ is (right) index stable,
then $|I|\leq 4$, and every $G_i$ is a  group among the 14 mentioned groups in \cite[Theorem~3.1]{HooshmandYousefian}.
\end{corollary}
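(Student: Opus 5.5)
The plan is to combine Theorem~\ref{family product index stability} with the classification of finite index stable groups, and to treat the bound on $|I|$ through a product of copies of $\mathbb{Z}_2$. First I reduce to the factors. If $\prod_{i\in I}G_i$ is (right) index stable, then by Theorem~\ref{family product index stability} every $G_i$ is right index stable (for left and two-sided stability, use the left analogue, which is proved identically). By the solution to (e1) in the first section, no infinite group is even right index stable. Hence each $G_i$ is finite. By \cite[Theorem~3.1]{HooshmandYousefian}, each $G_i$ is then one of the 14 groups: the groups of order $\le 9$ other than $C_6,C_8,C_9$, together with $C_2^4$. We may discard trivial factors, or interpret $|I|$ as the number of nontrivial factors; I will note this convention explicitly.

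Next I bound $|I|$. If $I$ is infinite, the product of infinitely many nontrivial groups is infinite. Again by (e1), it is not right index stable, so $I$ is finite. The product is then a finite index stable group and must itself appear among the 14 groups, so its order is at most $16$. Each nontrivial factor has order at least $2$, so $2^{|I|}\le 16$ and $|I|\le 4$. Equality forces every factor to be $C_2$ and the product to be $C_2^4$, which is consistent with the list.

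The only delicate point is the convention about trivial factors, since $\{1\}$ can be repeated arbitrarily without affecting stability. Also, ``every $G_i$ is among the 14 groups'' must be checked for the left and two-sided versions. I expect the main obstacle to be the left analogue of Theorem~\ref{family product index stability}. I would handle it by mirroring the proof with $\SubF_\ell$ and the identity $\Dif_r(\prod A_i)=\prod\Dif_r(A_i)$.
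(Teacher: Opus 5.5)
Your proof is correct and follows essentially the argument the paper intends (the corollary is given without a separate proof). Theorem~\ref{family product index stability} makes each $G_i$ right index stable. The nonexistence of infinite index stable groups, together with the classification of the 14 finite ones, then places each $G_i$ and the product itself among the 14 groups, and the maximal order $16$ gives $2^{|I|}\le 16$. Your convention of discarding trivial factors is genuinely needed, since a product of infinitely many trivial groups would otherwise contradict $|I|\le 4$. The left case you worry about is immediate, because $B\in\SubF_\ell(G:A)$ if and only if $B^{-1}\in\SubF_r(G:A^{-1})$.
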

In the following remark also we improve some results mentioned in \cite[Remark~3.16]{Hooshmand2020}.
\begin{remark}\label{module index stability}
There are some remarks on subfactors and index stability of the additive group of rings and modules.

\begin{enumerate}

    \item If $(G,+)$ is an $R$-module and $r$ be an element of $R$ with the zero annihilator. Then,
    the additive group of $G$ and $rG$ has the same index stability status, since $G\cong rG$. Now, let $A\subseteq G$
    and $r$ is as mentioned. Then, $rA\subseteq rG \leq G$ and we have $B\in\SubF(G:A)$ if and only if $rB\in\SubF(rG:rA)$, and so
    $|rG:rA|^\pm=|G:A|^\pm$ (because every subset of $rG$ is of the form $rB$, for some $B\subseteq G$, and $|rB|=|B|$). Hence by applying this identity and Theorem \ref{Subgroupsindex} (for $H=rG$ and $rA\subseteq H$) we obtain
    \begin{equation}\label{submodule}
    |G:rA|^\pm=|G:rG||rG:rA|^\pm=|G:rG||G:A|^\pm
    \end{equation}
This inequality has some important results (under the above assumptions) that some of them are:
\begin{itemize}
\item if $|G:rG||G:A|^-\neq|G:rG||G:A|^+$, then $rA$ is not index stable in $G$. Hence, if $|G:rG|$
is either finite or less than $|G:A|^-$, then non-index stability of $A$ (in $G$) implies
 non-index stability of $rA$ in $G$.
\item if $rA$ is index stable in $G$ or $|G:rA|^+=|G:rG|$, then $|G:rG||G:A|^-=|G:rG||G:A|^+$,
and this implies $A$ is index stable in $G$ if (moreover)  $|G:rG|$ is finite.
\item if $G$ is countable then $|G:A|^+=\aleph_0$ implies $|G:rA|^+=\aleph_0$
(we provide an interesting example for it in the next example).
\end{itemize}
\item  If $(G,+,\cdot)$ is a ring with identity and $r$ an invertible element of $G$, then the above part implies that
$$|G:rA|^\pm=|r^{-1}G:A|^\pm$$ for every invertible element $r$. Hence, if $rG=G$ (e.g., if $(G,+,\cdot)$ is a field) then
$|G:rA|^\pm=|G:A|^\pm$. Therefore, $|G:rA+x|^\pm=|G:A|^\pm$ for all $x\in G$, and every invertible element $r$ such that $rG=G$.
\end{enumerate}

\end{remark}
\begin{example}
Consider the $\mathbb{Z}$-module $(\mathbb{Z},+)$, fix $m\in \mathbb{Z}\setminus \{0\}$ and let $A\subseteq \mathbb{Z}$.
Then ... requires that
$$
|\mathbb{Z}:mA|^\pm=|m||m\mathbb{Z}:mA|^\pm=|m||\mathbb{Z}:A|^\pm.
$$
Therefore\\
- $mA$ is index stable in $\mathbb{Z}$ if and only if $A$ is index stable in $\mathbb{Z}$, and if it is the
case, then $|\mathbb{Z}:mA|=|m||\mathbb{Z}:A|$.\\
- if $|\mathbb{Z}:mA|^-=\aleph_0$ (equivalently, $|\mathbb{Z}:A|^-=\aleph_0$), then $A$ is index stable and $|\mathbb{Z}:A|=|\mathbb{Z}:mA|=\aleph_0$.\\
- $|\mathbb{Z}:A|^+=\aleph_0$ if and only if $|\mathbb{Z}:mA|^+=\aleph_0$.\\
As a nice example, fix an integer $m>0$. Then due to Lemma~\ref{perfect square} we conclude that
$$|\mathbb{Z}:\{0,m,4m, \cdots, n^2m,\cdots\}|=2m.$$
$$|\mathbb{Z}:\{m,4m, \cdots, n^2m,\cdots\}|^-=3m\; , \; |\mathbb{Z}:\{m,4m, \cdots, n^2m,\cdots\}|^+=4m.$$


Now, consider the additive group of the field $(\mathbb{Q},+,\cdot)$ and put $A=A_k$ where $A_k$ is as mentioned in Lemma~\ref{rationals}. Then
$rA_k+x$ is non-index stable for all $r\neq 0$, $x\in \mathbb{Q}$, and  $k=6,7,8$ (hence, there are infinitely
many non index stable subsets of rationales).
  \end{example}
\subsection{Sub-indices, index stability, and $\infty$-difference length of subsets of integer numbers}
One of the most interesting and challenging aspects of this topic is the study of subindices and index stability for the integers, which we will pursue here. Because the additive group of integers is countably infinite, we can leverage previous results to establish the following multi-part criterion for them.
\begin{theorem}[Integers index stability Criteria]\label{integer criteria}
All the following conditions are necessary for $A$ to be non-index stable.\\
$($a$)$ $A$ and $A^c$ are infinite,\\
$($b$)$ $A-A$ is not a subgroup,\\
$($c$)$  there is a finite RSFA-number of $A$,\\
$($d$)$ $A-A$ is \textbf{syndetic} in $\mathbb{Z}$,\\
$($e$)$ the equation $(A-A)\cap (X-X)=\{0\}$ has a finite maximal solution.\\
Hence, if a subset $A$ of integers does not satisfy one of the conditions, then $A$ is index stable $($ in $\mathbb{Z})$.
\end{theorem}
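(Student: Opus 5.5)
We prove the contrapositive one condition at a time. Assume $A\subseteq\mathbb{Z}$ fails one of (a)--(e); we show that $A$ is index stable. Since $\mathbb{Z}$ is abelian, $A-A=\Dif_\ell(A)=\Dif_r(A)$ and the left and right subfactor notions coincide. It is therefore enough to show that $|\mathbb{Z}:A|^-=|\mathbb{Z}:A|^+$; all four subindices then agree.

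\emph{Conditions (a) and (b).} If $A$ is finite (including $A=\emptyset$), Remark~\ref{FirstSubindexExample}, or equivalently Theorem~\ref{InfiniteSubindexEquation}(b), gives index stability with $|\mathbb{Z}:A|=\aleph_0$. If $A^c$ is finite, Corollary~\ref{finiteSunindexCriteria1} gives $|\mathbb{Z}:A|=1$. If $A-A$ is a subgroup, Theorem~\ref{BasicTheorem}(c) shows that $A$ is right index stable with $|\mathbb{Z}:A|_r=|\mathbb{Z}:A-A|$. Commutativity transfers this to the left side, so $A$ is index stable.

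\emph{Condition (d).} If $A-A$ is not syndetic, Proposition~\ref{Countable groups}(c) gives $|\mathbb{Z}:A|_r=\aleph_0$. Again commutativity makes this two-sided.

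\emph{Conditions (c) and (e).} These two cases are linked through Lemma~\ref{ComlpeteRSFA}: because $\mathbb{Z}$ is countable, every subfactor is the image of a maximal RSFA sequence. By Theorem~\ref{RSFAtheorem}(b), every finite RSFA sequence is maximal, so its image is a finite subfactor. A finite maximal solution $X$ of $(A-A)\cap(X-X)=\{0\}$ is exactly a finite right subfactor relative to $A$, by (\ref{SubfactorCondition}) and the characterization (\ref{maximal in C(A)}). Hence (c) fails if and only if (e) fails, and both say that no right subfactor is finite, i.e.\ $|\mathbb{Z}:A|^-\geq\aleph_0$. Since $\mathbb{Z}$ is countable, $|\mathbb{Z}:A|^+\le\aleph_0$, and therefore $|\mathbb{Z}:A|^-=|\mathbb{Z}:A|^+=\aleph_0$.

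The main obstacle is this last case, specifically identifying ``finite RSFA-number'' with ``finite subfactor.'' This requires checking that a terminating RSFA run really produces a maximal set, which follows from the union-version stopping rule $(A-A)+\{g_0,\dots,g_N\}=\mathbb{Z}$ together with (\ref{SubfactorCondition}). It also requires the converse, that every finite subfactor arises from some terminating run; this is the content of Lemma~\ref{ComlpeteRSFA}. Once both directions are in place, each failed condition yields index stability, which proves that all of (a)--(e) are necessary for non-index stability.
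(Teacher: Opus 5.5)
Your proof is correct and takes essentially the same route as the paper. The paper's proof is a one-line appeal to Theorem~\ref{BasicTheorem}, Theorem~\ref{InfiniteSubindexEquation}, the RSFA corollary and the countable-group criteria; you carry out that same condition-by-condition contrapositive, and you also spell out the link between finite RSFA-numbers, finite maximal solutions of $(A-A)\cap(X-X)=\{0\}$ and finite subfactors, which the paper leaves implicit.
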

\begin{proof}
This is a direct result of Theorem~\ref{BasicTheorem}, Theorem~\ref{InfiniteSubindexEquation}, Corollary~\ref{RSFAProperties},
and Proposition~\ref{CountableCor}.
\end{proof}
The following lemma introduces an important class of index stable subsets of integers
that also shows that the additive group of integers is index pervasive (also see Theorem~\ref{pervasive}).
\begin{lemma}\label{pervasive integers}
If $A$ is a subset of the additive group of integers such that
$C^0(A)=\{0,\pm1,\pm2,\dots,\pm n\}$, for some $n\geq 0$, then
$$
\SubF^0(\mathbb{Z}:A)=\{[-x,y]_\mathbb{Z}: x,y\geq 0\; , \; x+y=n\},
$$
and so $A$ is index stable with $|\mathbb{Z}:A|=n+1$.\\
Also, we have $B-B=C^0(A)$ for all $B\in \SubF^0(\mathbb{Z}:A)$ (a typical such element $B$ is
$\{0,1,2,\dots, n\}$, another one is  $\{0,\pm1,\dots,\pm\frac{n}{2}\}$ if $n$ is even).\\
\end{lemma}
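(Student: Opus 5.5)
The plan is to use characterization (\ref{maximal in C(A)}) in additive form. Set $C:=C^0(A)=[-n,n]_\mathbb{Z}$. Then $\SubF^0(\mathbb{Z}:A)$ consists exactly of the subsets $B$ with $0\in B\subseteq C$, $B-B\subseteq C$, and $B$ maximal with respect to these properties. The problem thus becomes purely combinatorial: describe the maximal sets $B\ni 0$ of integers whose difference set lies in $[-n,n]$.

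First I will note that for $B\subseteq\mathbb{Z}$ the condition $B-B\subseteq[-n,n]$ just says $\max B-\min B\le n$. Hence $B$ is contained in the interval $[\min B,\max B]_\mathbb{Z}$, whose length is at most $n$. This interval contains $0$ because $0\in B$, and it still satisfies the difference condition. By maximality, $B$ equals an interval $[-x,y']_\mathbb{Z}$ with $x,y'\ge 0$ and $x+y'\le n$. If $x+y'<n$, then adjoining $y'+1$ gives a strictly larger admissible set (it still lies in $C$ since $y'+1\le n$), contradicting maximality. So $x+y'=n$. Conversely, every $[-x,y]_\mathbb{Z}$ with $x+y=n$ is admissible. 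It is also maximal: any $z$ outside it satisfies either $z>y$, giving $z-(-x)>n$, or $z<-x$, giving $y-z>n$. This proves the displayed description of $\SubF^0(\mathbb{Z}:A)$, and also gives $B-B=[-n,n]_\mathbb{Z}=C^0(A)$ for each such $B$. The two typical examples are the cases $x=0$ and $x=y=n/2$.

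Next I will pass from the description to the index. Every $B\in\SubF^0$ has exactly $n+1$ elements. Since $\SubF_r(\mathbb{Z}:A)$ is invariant under translation and every right subfactor can be translated to contain $0$, all right subfactors have cardinality $n+1$, so $|\mathbb{Z}:A|^+=|\mathbb{Z}:A|^-=n+1$. Because $\mathbb{Z}$ is abelian, $\Dif_\ell(A)=\Dif_r(A)$ and left and right subfactors coincide, so all four subindices agree. Hence $A$ is index stable with $|\mathbb{Z}:A|=n+1$.

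There is no real obstacle. The only point that needs care is justifying the use of (\ref{maximal in C(A)}), in particular that maximality among subsets of $C^0(A)$ containing $0$ is the same as maximality in $\SubF^0$. The paper already establishes this right after (\ref{maximal in C(A)}), so I will simply cite it. The degenerate case $n=0$ gives $B=\{0\}$ and index $1$, consistent with $\Dif(A)=\mathbb{Z}$.
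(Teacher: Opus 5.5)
Your proof is correct and follows essentially the same route as the paper. Both arguments reduce to maximal sets $B\ni 0$ with $B-B\subseteq[-n,n]_\mathbb{Z}$, take $x=-\min B$ and $y=\max B$, and contradict maximality by adjoining $y+1$ when $x+y<n$. Your version is in fact more complete: you show explicitly that $B=[-x,y]_\mathbb{Z}$, you check that every such interval is maximal (the reverse inclusion, which the paper omits), and you justify passing from $\SubF^0$ to all four subindices via translation invariance and commutativity.
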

\begin{proof}
Let $B\in \SubF^0(\mathbb{Z}:A)$ and put $x=-\min B$, $y=\max B$. Then, $x,y\geq 0$ (because $0\in B$) and
$[-x,y]_\mathbb{Z}\subseteq [-n,n]_\mathbb{Z}$, where $[a,b]_\mathbb{Z}$ denotes $[a,b]\cap \mathbb{Z}$.
Since $\pm (x+y)\in B-B\subseteq [-n,n]_\mathbb{Z}$, we conclude that
$$[-(x+y),x+y]_\mathbb{Z}=[-x,y]_\mathbb{Z}-[-x,y]_\mathbb{Z}\subseteq [-n,n]_\mathbb{Z}.$$
Now, if $x+y<n$, then
$$B\subsetneqq [-x,y+1]_\mathbb{Z}\subseteq [-x,y+1]_\mathbb{Z}-[-x,y+1]_\mathbb{Z} =[-(x+y+1),x+y+1]_\mathbb{Z}\subseteq [-n,n]_\mathbb{Z},$$
that is a contradiction due to the maximality of $B$. Finally, note that $|B|=n+1$ and $B-B=[-n,n]_\mathbb{Z}$.
\end{proof}
It is worth noting that the syndeticity of the difference sets of integers (part (d) of the above theorem) plays a significant and applicable role in the study of index stability of integer sets. In this regard, the following theorems and corollaries provide important applicable tools.
\begin{lemma}\label{syndetic}
A subset of integers is syndetic $($in $\mathbb{Z})$ if and only if it is two-sided unbounded with bounded gaps.
\end{lemma}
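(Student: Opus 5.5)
We have to prove that a subset $S\subseteq\mathbb{Z}$ is syndetic, i.e.\ $S+F=\mathbb{Z}$ for some finite $F\subseteq\mathbb{Z}$, if and only if $S$ is unbounded above and below and its consecutive elements differ by a bounded amount. Since $\mathbb{Z}$ is abelian, left and right syndeticity coincide, and we write the covering additively throughout. We prove the two directions separately.

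\emph{Necessity.} Assume $S+F=\mathbb{Z}$ with $F$ finite and nonempty, and put $M=\max_{f\in F}|f|$.
\begin{itemize}
\item If $S$ were bounded above by some $s_0$, then $S+F$ would be bounded above by $s_0+M$. This contradicts $S+F=\mathbb{Z}$, so $S$ is unbounded above. The same argument shows $S$ is unbounded below; in particular $S$ is infinite.
\item Now take an arbitrary integer $n$. Since $n\in S+F$, we can write $n=s+f$ with $s\in S$ and $f\in F$, so $|n-s|\le M$. Hence every interval $[n-M,n+M]_\mathbb{Z}$ meets $S$.
\item Consequently two consecutive elements of $S$ differ by at most $2M+1$. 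Indeed, if $s<s'$ were consecutive with $s'-s>2M+1$, then $n=s+M+1$ would satisfy $[n-M,n+M]_\mathbb{Z}\cap S=\emptyset$, a contradiction.
\end{itemize}
So $S$ is two-sided unbounded with bounded gaps.

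\emph{Sufficiency.} Assume $S$ is unbounded in both directions and consecutive elements differ by at most $d$. Let $n\in\mathbb{Z}$.
\begin{itemize}
\item Two-sided unboundedness gives elements of $S$ both $\le n$ and $\ge n$. Let $s=\max\{t\in S: t\le n\}$; this maximum exists because the set is nonempty and bounded above by $n$.
\item If $s=n$ we are done. Otherwise the successor $s'$ of $s$ in $S$ exists (since $S$ is unbounded above) and satisfies $s'>n$. Thus $0\le n-s<s'-s\le d$.
\item Therefore $n\in S+\{0,1,\dots,d-1\}$. As $n$ was arbitrary, $S+F=\mathbb{Z}$ with $F=\{0,\dots,d-1\}$ finite, so $S$ is syndetic.
\end{itemize}

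No step is genuinely hard. The one point needing care is that ``bounded gaps'' only makes sense once two-sided unboundedness is established. Without it, a set bounded on one side, such as $\mathbb{N}$, has bounded gaps but is not syndetic. We therefore use unboundedness explicitly in the sufficiency direction, both to guarantee that $s$ exists and that its successor $s'$ exists.
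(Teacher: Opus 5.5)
Your proof is correct and takes essentially the same route as the paper. The sufficiency direction is identical: place $n$ between consecutive elements of $S$ and cover it by $S+\{0,\dots,d-1\}$. For necessity you use windows of radius $M=\max_{f\in F}|f|$, whereas the paper translates $F$ into the positive integers and writes the upper endpoint of a long gap as $s''+f$; this difference is only cosmetic.
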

\begin{proof}
Let $S\subseteq \mathbb{Z}$ be syndetic. Then, there is a finite nonempty subset $F$ of positive integers such that $S+F=\mathbb{Z}$
(because $F$ is finite, and $S+X=\mathbb{Z}$ implies $S+(X+m)=\mathbb{Z}$ for all $m\in \mathbb{Z}$). Put
$g=\max F$ and suppose that the set of gaps of $S$ is not bounded. Then there are two consecutive elements $s'$ and $s$
of $S$ such that $s-s'>g$. Since $d\in S+F$, there are $s''\in S$ and $f\in F$ such that $s=s''+f$
(it is obvious that $s''\neq s'$). If $s''>s'$ (resp. $s''<s'$),
then $s''\geq s$ and so $f=s-s''\leq 0$ (resp. $s=s''+f<s'+g<s$) that is a contradiction. Note that if
$S$ is bounded up or below then $S+F$ is so, and this is also a contradiction.\\
Conversely, Let $S$ be two-sided unbounded with bounded gaps by $g$. Then, one can
sort the elements of $S$ as $S=\{\cdots, s_{-2}, s_{-1}, s_{0}, s_{1}, s_{2}, \cdots  \}$.
Hence, if $x\in \mathbb{Z}$ then $s_k\leq x < s_{k+1}$ for a unique $k\in \mathbb{Z}$.
Thus
$$
x\in \{s_{k}, s_{k}+1, \cdots, s_{k+1}-1 \}\subseteq \{s_{k}, s_{k}+1, \cdots, s_{k}+g-1 \}\subseteq
S\cup S \cup \cdots \cup S+g-1=S+\{0, 1, \cdots, g-1\}.
$$
Therefore $S+\{0, \cdots, g-1\}=\mathbb{Z}$ and the proof is complete.
\end{proof}
It is interesting that for checking the syndetic status of $A-A$ we do not need the two-sided unbounded condition.
For if $A$ is finite, then $A-A$ is not syndetic, and if $A$ is infinite, then $A-A$ is two-sided unbounded as mentioned bellow.
\begin{corollary}\label{A-A Syndetic}
Let $A$ be an infinite  subset of integers. Then, the equation $(A-A)+X=\mathbb{Z}$
has a finite solution $($i.e., $A-A$ is syndetic$)$ if and only if $A-A$ has bounded gaps.
\end{corollary}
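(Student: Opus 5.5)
The plan is to reduce the corollary to Lemma~\ref{syndetic}: show that when $A$ is infinite, $A-A$ is automatically unbounded on both sides. Then the two-sided unboundedness hypothesis in that lemma is satisfied for free, and syndeticity of $A-A$ becomes equivalent to having bounded gaps. The equivalence between ``$(A-A)+X=\mathbb{Z}$ has a finite solution'' and ``$A-A$ is syndetic'' is just the definition of syndeticity in the abelian group $\mathbb{Z}$, where right and left syndetic coincide. So nothing needs proving there.

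\textbf{Step one: two-sided unboundedness.} Since $A$ is an infinite subset of $\mathbb{Z}$, it is unbounded above or unbounded below. Fix some $a_0\in A$.
\begin{itemize}
\item If $A$ is unbounded above, then $a-a_0$ ranges over arbitrarily large positive integers as $a$ runs through $A$.
\item If $A$ is unbounded below, then $a_0-a$ does the same.
\end{itemize}
In either case $A-A$ contains arbitrarily large positive integers. Because $A-A=-(A-A)$ is symmetric, it also contains arbitrarily large negative integers, so $A-A$ is unbounded in both directions. This symmetry is the key point: a general set such as $\mathbb{N}$ has bounded gaps but is not syndetic, whereas a difference set can never be one-sided.

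\textbf{Step two: apply Lemma~\ref{syndetic}.} With unboundedness established, the lemma gives: $A-A$ is syndetic if and only if it is two-sided unbounded with bounded gaps, which here is equivalent to $A-A$ having bounded gaps.

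For the forward direction one could also bypass the lemma, since syndetic sets always have bounded gaps by its first half. The converse is exactly the second half of the lemma, whose cover $S+\{0,\dots,g-1\}=\mathbb{Z}$ supplies the finite solution $X=\{0,\dots,g-1\}$. Here "bounded gaps" means the consecutive elements of the two-sided sequence listing $A-A$ differ by at most some $g$.

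I do not expect a real obstacle. The only point needing care is why infiniteness is required: a finite $A$ makes $A-A$ finite, so it trivially has bounded gaps between its elements but cannot be syndetic. The hypothesis that $A$ is infinite is used precisely in Step one.
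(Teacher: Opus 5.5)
Your proposal is correct and follows essentially the paper's proof. The paper translates so that $0\in A$ and uses $A\cup(-A)\subseteq A-A$ to get two-sided unboundedness of $A-A$, which is exactly your observation that $a-a_0$ and $a_0-a$ lie in $A-A$; like you, it then lets Lemma~\ref{syndetic} reduce syndeticity to bounded gaps.
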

\begin{proof}
Without loss of the generality assume that $0\in A$ (because $(A-a_0)-(A-a_0)=A-A$ and $0\in A-a_0$ for all $a_0\in A$).
Hence, $A\cup (-A)\subseteq A-A$ and so $A-A$ is two-sided unbounded (since $A$ is infinite and $A\cup (-A)$ is symmetric).
\end{proof}
\begin{corollary}\label{integer aleph_0 index}
If $A-A$ has \underline{unbounded} gaps, then $A$ is index stable with $|\mathbb{Z}:A|=\aleph_0$.
\end{corollary}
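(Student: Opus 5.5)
The statement follows by chaining the previous corollary with the syndeticity criteria of Section 2. First I dispose of the degenerate case: if $A=\emptyset$ then $A-A=\emptyset$, and the only right subfactor relative to $\emptyset$ is $\mathbb{Z}$ itself, so $|\mathbb{Z}:A|=\aleph_0$ trivially. Next, if $A$ is a nonempty finite set, then $A$ is index stable with index $|\mathbb{Z}|=\aleph_0$ by Theorem~\ref{InfiniteSubindexEquation}(b), because $|A|<|\mathbb{Z}|$. So the substantive case is an infinite $A$ for which $A-A$ has unbounded gaps.

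For infinite $A$, Corollary~\ref{A-A Syndetic} says that $A-A$ is syndetic in $\mathbb{Z}$ if and only if it has bounded gaps. Under our hypothesis, $A-A=\Dif(A)$ is therefore not syndetic; since $\mathbb{Z}$ is abelian, this means it is neither right nor left syndetic. Proposition~\ref{Countable groups}(c) then gives right index stability with $|\mathbb{Z}:A|_r=\aleph_0$. Equivalently, Corollary~\ref{InfiniteSunindexCriteria2}(a) gives $|\mathbb{Z}:A|^-\geq\aleph_0$. Combining this with Theorem~\ref{InfiniteSubindexEquation}(a) (or with the trivial bound $|B|\le|\mathbb{Z}|=\aleph_0$) yields $|\mathbb{Z}:A|^-=|\mathbb{Z}:A|^+=\aleph_0$.

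It remains to obtain the left subindices. In an abelian group we have $\Dif_\ell(A)=\Dif_r(A)=A-A$, and left and right subfactors coincide, because $B+A$ is direct exactly when $A+B$ is. Hence $|\mathbb{Z}:A|_\pm=|\mathbb{Z}:A|^\pm=\aleph_0$, all four subindices agree, and $A$ is index stable with $|\mathbb{Z}:A|=\aleph_0$.

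There is no real obstacle here. The only point needing care is that Corollary~\ref{A-A Syndetic} requires $A$ to be infinite, which is why the finite and empty cases are handled separately at the start.
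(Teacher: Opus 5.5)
Your proof is correct and is essentially the paper's argument. The paper likewise uses Corollary~\ref{A-A Syndetic} to turn unbounded gaps into non-syndeticity of $A-A$, then invokes condition~(d) of Theorem~\ref{integer criteria}, which is the same non-syndeticity criterion you apply directly via Proposition~\ref{Countable groups}(c); your separate treatment of empty and finite $A$, of the value $\aleph_0$, and of the left subindices via commutativity only makes explicit what the paper's one-line proof leaves implicit.
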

\begin{proof}
This is a direct result of Theorem \ref{integer criteria} and Corollary \ref{A-A Syndetic}.
\end{proof}
Although the above corollaries are very important and useful for the study of index stability and subindices of
integer numbers, but the next theorem provides a simple criteria for index stability with the infinite index.
\begin{theorem}\label{limit}
If $\{a_n\}_{n=1}^{\infty}$ is a strictly increasing sequence of integers such that $a_{n}-2a_{n-1}\to \infty$ as $\to \infty$, then  $|\mathbb{Z}:\{a_n\}|=\aleph_0$.
\end{theorem}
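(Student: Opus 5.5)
The plan is to reduce the statement to Corollary~\ref{integer aleph_0 index}. It says that if $A-A$ has unbounded gaps, then $A$ is index stable with $|\mathbb{Z}:A|=\aleph_0$. So it suffices to show that the difference set of $A=\{a_n\}$ has arbitrarily long gaps.

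Since $A-A$ is symmetric and contains $0$, it is enough to study the positive differences $a_m-a_k$ with $m>k$. The idea is to locate, for each large $n$, an interval of positive integers that contains no element of $A-A$ and whose length tends to infinity.

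Put $d_n:=a_n-2a_{n-1}$, so $d_n\to\infty$. First, $a_n\to\infty$: the sequence is strictly increasing, and for large $n$ we have $a_n>2a_{n-1}$. Discarding finitely many terms, we may assume $a_n>0$ for all $n$ considered. The finitely many early terms contribute only finitely many differences of the form $a_m-a_k$ with $k$ small; these need separate care, discussed at the end.

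Fix a large $n$. Differences with $m\le n-1$ are at most $a_{n-1}-a_1<a_{n-1}$. Differences with $m\ge n$ are at least $a_n-a_{n-1}$. Indeed, for $m=n$ the smallest is $a_n-a_{n-1}$, and for $m>n$ we get $a_m-a_k\ge a_m-a_{m-1}\ge a_n-a_{n-1}$, provided the consecutive gaps $a_m-a_{m-1}$ are eventually nondecreasing. This holds because $a_m-a_{m-1}=a_{m-1}+d_m>a_{m-1}\ge a_{m-1}-a_{m-2}$ once $a_{m-2}>0$ and $d_m>0$.

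Hence the open interval $(a_{n-1},\,a_n-a_{n-1})$ contains no positive element of $A-A$. Its length is $a_n-2a_{n-1}=d_n\to\infty$. Therefore $A-A$ has unbounded gaps, and Corollary~\ref{integer aleph_0 index} gives $|\mathbb{Z}:\{a_n\}|=\aleph_0$.

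The main obstacle is handling the initial terms correctly. Terms $a_k$ with $k$ below the threshold, possibly negative or zero, give differences $a_m-a_k$ that could fall inside the interval. I would avoid this by first replacing $A$ with the tail $A'=\{a_n:n\ge n_0\}$. The argument above shows that $A'-A'$ has unbounded gaps, so $A'-A'$ is not syndetic by Corollary~\ref{A-A Syndetic}. Corollary~\ref{abeliansyndetic} then shows that $\Dif(A)=\Dif(A'\cup F)$, with $F$ finite, is also non-syndetic. Finally, Theorem~\ref{integer criteria}(d) gives index stability of $A$ with index $\aleph_0$.
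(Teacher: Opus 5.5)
Your proof is correct and is essentially the paper's. The paper exhibits the same gap $[a_{n-1}+1,\,a_n-a_{n-1}-1]$ in $A-A$, bounding the differences $a_i-a_j$ in the cases $i<n$, $i\ge n>j$ and $i>j\ge n$ much as you do, and then invokes Corollary~\ref{integer aleph_0 index}. Your passage to the tail $A'$ together with Corollary~\ref{abeliansyndetic} actually repairs a point the paper glosses over: negative initial terms $a_j$ can place $a_{n-1}-a_j$ inside that interval. One small citation fix: the value $\aleph_0$ should be credited to Corollary~\ref{InfiniteSunindexCriteria2} or Corollary~\ref{integer aleph_0 index}, not to Theorem~\ref{integer criteria}(d), which only gives stability.
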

\begin{proof}
Denote the range of $\{a_n\}_{n=1}^{\infty}$ by $A$, and
put $D=A-A$, $c_n=a_{n}-2a_{n-1}-1$. Then we have $a_{n}>2a_{n-1}+c_n$,
$c_n\to \infty$, and so there is a positive integer $n_1$ such $c_n>1$ and $a_n>0$ for all $n\geq n_1$. We claim that
$$
[a_{n-1}+1,a_{n-1}+c_n]\cap D=\emptyset\; ; \; n\geq n_1.
$$
This clearly implies that $D$ has unbounded gaps and so $|\mathbb{Z}:A|=\aleph_0$ by  Corollary \ref{integer aleph_0 index}.\\
For proving the claim note that the elements of $D$ are of the form $a_i-a_j$. If $i\leq j$, then $a_i-a_j\leq 0$ and
does not belong to $[a_{n-1}+1,a_{n-1}+c_n]$. Also, if $i>j$, then consider the following cases:\\
(1) $n_1\leq i<n$. We have $a_i<a_j\leq a_{n-1}$ and so $a_i-a_j\leq a_{n-1}$ which requires $a_i-a_j\notin [a_{n-1}+1,a_{n-1}+c_n]$.\\
(2) $i\geq n>j\geq n_1$. We have
$$
a_i-a_j\geq a_n-a_j\geq a_n-a_{n-1}>a_{n-1}+c_n \Rightarrow a_i-a_j\notin [a_{n-1}+1,a_{n-1}+c_n].
$$
(3) $i>j\geq n\geq n_1$. We have
$$
a_i-a_j> 2a_{i-1}+c_i-a_j= (a_{i-1}-a_j)+a_{i-1}+c_i\geq a_{i-1}+c_i\geq a_{n}+c_i
> 2a_{n-1}+c_n+c_i> a_{n-1}+c_n
$$
$$
 \Rightarrow a_i-a_j\notin [a_{n-1}+1,a_{n-1}+c_n].
$$
\end{proof}
\begin{corollary}\label{limitcor}
If $\{a_n\}_{n=1}^{\infty}$ is a strictly decreasing sequence of integers such that $a_{n}-2a_{n-1}\to -\infty$ as $\to \infty$, then  $|\mathbb{Z}:\{a_n\}|=\aleph_0$.
\end{corollary}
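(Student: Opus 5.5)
The natural route is to reduce Corollary~\ref{limitcor} to Theorem~\ref{limit} by negation. The map $f:\mathbb{Z}\to\mathbb{Z}$, $f(x)=-x$, is an automorphism of $(\mathbb{Z},+)$. By the discussion at the beginning of Section~3, monomorphisms preserve subfactors, so $|\mathbb{Z}:A|^\pm=|\mathbb{Z}:f(A)|^\pm$ and similarly for the left subindices. Alternatively, Remark~\ref{module index stability}(2) applies with $r=-1$, which is invertible and satisfies $r\mathbb{Z}=\mathbb{Z}$, giving $|\mathbb{Z}:-A|^\pm=|\mathbb{Z}:A|^\pm$. Either way, index stability and the value of the index are transported between $A$ and $-A$.

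Put $b_n=-a_n$. Since $\{a_n\}$ is strictly decreasing, $\{b_n\}$ is a strictly increasing sequence of integers. Moreover
$$b_n-2b_{n-1}=-(a_n-2a_{n-1})\to+\infty,$$
so $\{b_n\}$ satisfies the hypothesis of Theorem~\ref{limit}. That theorem gives $|\mathbb{Z}:\{b_n\}|=\aleph_0$, i.e. $-A$ is index stable with index $\aleph_0$, where $A$ is the image of $\{a_n\}$.

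Since the image of $\{b_n\}$ is exactly $-A$, the invariance from the first paragraph gives $|\mathbb{Z}:A|^\pm=|\mathbb{Z}:A|_\pm=\aleph_0$. Hence $A$ is index stable and $|\mathbb{Z}:\{a_n\}|=\aleph_0$.

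A second route avoids the automorphism altogether. Since $A-A=(-A)-(-A)$, the difference set of $A$ equals that of $-A$. The proof of Theorem~\ref{limit} shows that this difference set has unbounded gaps, so Corollary~\ref{integer aleph_0 index} applies to $A$ directly.

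There is no real obstacle. The only point needing care is the sign bookkeeping: "strictly decreasing" together with $\to-\infty$ must become "strictly increasing" together with $\to+\infty$ under negation, which the display above confirms.
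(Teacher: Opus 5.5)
Your proposal is correct and is essentially the paper's proof: the paper also passes to $-a_n$, observes that it satisfies the hypotheses of Theorem~\ref{limit}, and concludes via $|\mathbb{Z}:\{a_n\}|=|\mathbb{Z}:\{-a_n\}|=\aleph_0$. The only difference is that you justify the invariance under negation, either through the automorphism $x\mapsto -x$ or through $A-A=(-A)-(-A)$, whereas the paper uses it without comment.
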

\begin{proof}
Note that if $\{a_n\}_{n=1}^{\infty}$ is strictly decreasing and $a_{n}-2a_{n-1}\to -\infty$, then $-a_n$ satisfies the
conditions of \ref{limit} and so
$$|\mathbb{Z}:\{a_n\}|=|\mathbb{Z}:-\{a_n\}|=|\mathbb{Z}:\{-a_n\}|=\aleph_0.$$
\end{proof}
\begin{example}\label{a power example}
Let $a\geq -1$ be a fixed integer and consider the sequence $a_n=a^n$, then   $a_n - 2a_{n-1}=(a-2)a^{n-1}$. Thus, if $a\geq 3$,
then $a_n$ is index stable and $|\mathbb{Z}:\{a^n\}|=\aleph_0$ by Theorem~\ref{limit}. Also, it has the same status if $-1\leq a\leq 1$ since
the range of $a_n$ is finite. For the case $a=2$, we also claim that
the set $A=\{2^n : n \ge 0\}$ is index stable with index $\aleph_0$, because its difference set $\Dif(A) = A-A= \{2^i - 2^j : i, j \ge 0\}$ is not syndetic.
For any integer $n \ge 2$, put
$$ J_n = \left( 2^n, \quad 3 \cdot 2^{n-1} \right). $$
The length of this interval is $ 2^{n-1} - 1$. Since $n$ is arbitrary, this length is unbounded.
We show that $J_n$ contains no element of $\Dif(A)$. Assume $d \in J_n \cap \Dif(A)$, so $d = 2^k - 2^m$ with $k > m \ge 0$,
and hence $d\ge 2^{m+1}-2^m=2^m$. Now, if $m\geq n+1$, then $d\ge 2^{n+1}>3\cdot 2^{n-1}$ that is a contradiction.
Thus, $m\leq n$ and we have the following cases:
\begin{enumerate}
    \item If $k \le n$, then $d \le 2^n - 2^0 = 2^n - 1 < 2^n$. Contradicts $d > 2^n$.
    \item If $k = n+1$, then $d = 2^{n+1} - 2^m< 3 \cdot 2^{n-1}$, and so $2^{n-1} < 2^m$. This implies
    $n=m$ (since $m\leq n$). Hence, $d = 2^{n+1} - 2^n = 2^n$ that is a contradiction.
     \item If $k \ge n+2$, then $d \ge 2^{n+2} - 2^{m}\ge 2^{n+2} - 2^{n}=3 \cdot 2^{n}$ that is impossible.
\end{enumerate}
It is also a good problem to check the index stability of $a^n$ for $a\leq -2$ (it seems the status is similar to the case $a>-2$).\\
Some other examples of index stable sequences with the infinity index which are only obtained by Theorem~\ref{limit} and Corollary~\ref{limitcor}
are as follows: $n!$ (OEIS A000142; see oeis.org), $n^n$ (OEIS A000312), $n^\alpha a^n$ with constant integers $\alpha>0$, $a>1$, Bell numbers (OEIS A000110),
Catalan numbers	(OEIS A000108), Partition numbers (OEIS A000041), $2^n-3^n$, etc.
\end{example}
In continuation, we focus on the subindices, index stability and the $\infty$-difference length of some important sets of integers.
For infinite difference length of subsets, note that it is introduced in \cite{Hooshmand2020} and more studies in
finite groups have been done in \cite{Hooshmand2023}. Also we mentioned their relations to the diameter
concept and covering numbers in the previous section. In particular, for the additive group of integers,
some explanations and examples are mentioned before. Hence, we use the results for more next studies.    \\
\textbf{Index stability and $\infty$-difference length of primes.}
In order to study the sub-indices of primes, there  is a famous conjecture
 (Maillet's Conjecture) about prime numbers that says:
 every even number is the difference of two primes (e.g., see \cite{Guy}).
It is clear that the conjecture is equivalent to $\Dif(\mathbb{P}_o)=\mathbb{Z}_e$, where $\mathbb{P}$
is the set of primes and
$\mathbb{P}_o=\mathbb{P}\setminus \{2\}=\mathbb{P}^*$ (the set of  odd primes). We want to show that it is also equivalent to
the property that the set of primes (and also $\mathbb{P}_o$)  is index stable with index 2. Note that Lemma 3.14 of \cite{Hooshmand2020} say that  for $A\subseteq \mathbb{Z}$
\begin{equation}\label{index two}
|\mathbb{Z}:A|=2\Leftrightarrow \mathcal{C}(A)+\mathcal{C}(A)\subseteq \Dif(A)
\Leftrightarrow (\mathcal{C}(A)+\mathcal{C}(A))\cap \mathcal{C}(A)=\emptyset
\end{equation}
 $$\Leftrightarrow
 \mathcal{C}(A)\cap (\mathcal{C}(A)+x)=\emptyset,
\mbox{ for all } x\in \mathcal{C}(A) \Leftrightarrow
\Dif(A)+\{0,x\}=\mathbb{Z}, \mbox{ for all } x\in \mathcal{C}(A)$$
This has an interesting interpretation that says the subset $A$ is index stable with index two if and only if
$\mathcal{C}(A)$ is sum-free (see \cite{TaoVu}).
By introducing a related hypothesis, and a conjecture we arrive
at the following important theorem. Note that the parts (a) and (d) are equivalent by Theorem 3.30 of \cite{Hooshmand2020}
It is worth \underline{noting} that the \underline{part (b) of \cite[Theorem 3.30]{Hooshmand2023} is wrong} and should be omitted from the theorem, because
$\mathbb{P}^*$ is not a subset of $\mathbb{Z}_e$.
\begin{theorem}\label{prime equivalence}
The following assertions are equivalent:\\
{\bf $($a$)$} The Maillet's Conjecture;\\
{\bf $($b$)$} $|\mathbb{Z}:\mathbb{P}|=2$;\\
{\bf $($c$)$} $|\mathbb{Z}:\mathbb{P}_o|=2$;\\
{\bf $($d$)$} $($Our primes hypothesis$)$ For every (distinct non-zero) integers $a, b$, at least one of the numbers
 $a, b$, and $a-b$ can be expressed as the difference of two primes;\\
{\bf $($e$)$} $($Our primes conjecture$)$ If $a,b$ are two integers such that can not be expressed as the difference of two primes,
then $a+b$ is a difference of two primes;\\
{\bf $($f$)$} $|\mathbb{Z}:\mathbb{P}|^+=2$;\\
{\bf $($g$)$} $|\mathbb{Z}:\mathbb{P}_o|^+=2$.\\
\end{theorem}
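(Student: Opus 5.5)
We organize the proof around the complement sets $\mathcal{C}(\mathbb{P})$ and $\mathcal{C}(\mathbb{P}_o)$ and the criterion (\ref{index two}).

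\textbf{Computing the difference sets.} First we record the structure of $\Dif(\mathbb{P}_o)$ and $\Dif(\mathbb{P})$. Differences of odd primes are even, so $\Dif(\mathbb{P}_o)\subseteq \mathbb{Z}_e$ and $\mathbb{Z}_o\subseteq \mathcal{C}(\mathbb{P}_o)$. Next,
\[
\Dif(\mathbb{P})=\Dif(\mathbb{P}_o)\cup(\mathbb{P}_o-2)\cup(2-\mathbb{P}_o).
\]
Here $\mathbb{P}_o-2$ consists of odd numbers $\ge 1$. Hence $\mathcal{C}(\mathbb{P})\cap \mathbb{Z}_e=\mathcal{C}(\mathbb{P}_o)\cap\mathbb{Z}_e$, while $\mathcal{C}(\mathbb{P})\cap\mathbb{Z}_o=\{\pm m: m \text{ odd},\ m+2\notin\mathbb{P}\}$, which is nonempty (e.g.\ $\pm 7$).

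\textbf{The equivalence (a)$\Leftrightarrow$(c).} Maillet's conjecture says exactly $\Dif(\mathbb{P}_o)=\mathbb{Z}_e$, i.e.\ $\mathcal{C}(\mathbb{P}_o)=\mathbb{Z}_o$. In that case $\mathcal{C}$ is sum-free (odd plus odd is even), so $|\mathbb{Z}:\mathbb{P}_o|=2$ by (\ref{index two}). Conversely, assume $|\mathbb{Z}:\mathbb{P}_o|=2$, so $\mathcal{C}(\mathbb{P}_o)$ is sum-free. If some even $e\in\mathcal{C}(\mathbb{P}_o)$, take the odd number $1\in\mathcal{C}(\mathbb{P}_o)$ (odd numbers always lie there). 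Then $e+1$ is odd and lies in $\mathcal{C}$, contradicting sum-freeness. So $\mathcal{C}(\mathbb{P}_o)=\mathbb{Z}_o$, which is Maillet.

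\textbf{The equivalence (a)$\Leftrightarrow$(b).} Under Maillet, $\mathcal{C}(\mathbb{P})$ consists only of odd numbers, hence is sum-free, giving (b). Conversely, if $\mathcal{C}(\mathbb{P})$ is sum-free and some even $e\in\mathcal{C}(\mathbb{P})$ exists, then adding $7\in \mathcal{C}(\mathbb{P})$ or $-7$ must never land in $\mathcal{C}(\mathbb{P})$. I will use an odd $m$ with $m\in\mathcal{C}(\mathbb{P})$, for instance $m=\pm7,\pm13,\pm19,\dots$ (any $m$ with $m+2$ composite). Then $e+m$ is odd, and I need $e+m\in\mathcal{C}(\mathbb{P})$, i.e.\ $|e+m|+2$ composite. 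Choosing $m\equiv$ suitable so that $e+m+2\equiv 0 \pmod 3$ and $|e+m|$ large, this holds: pick odd $m$ with $m+2$ composite (e.g.\ divisible by $5$) and $e+m+2$ divisible by $3$. Such $m$ exists by CRT mod $30$.

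\textbf{The equivalences with (d)–(g).} (a)$\Leftrightarrow$(d) is cited (Theorem 3.30 of \cite{Hooshmand2020}). For (e): in terms of $\mathcal{C}=\mathcal{C}(\mathbb{P}_o)$, statement (e) says that $a,b\in\mathcal{C}$ implies $a+b\notin \mathcal{C}$. This is just sum-freeness of $\mathcal{C}(\mathbb{P}_o)$, which we showed is equivalent to (c). Since the cleaner reading of (e) with $\mathbb{P}$ coincides by the argument above, (e) is equivalent to (c). For (f),(g): (b)$\Rightarrow$(f) and (c)$\Rightarrow$(g) are trivial. Conversely $|\mathbb{Z}:A|^+=2$ forces index stability with index $2$, since $\mathcal{C}(A)\ne\emptyset$ gives $|\mathbb{Z}:A|^-\ge2$ (Corollary~\ref{RSFAProperties}).

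\textbf{Main obstacle.} The hard step is the CRT step in (b)$\Rightarrow$(a), which needs care with signs and small cases. The rest is bookkeeping via (\ref{index two}).
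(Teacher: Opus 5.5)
Your proof is correct, but it reaches the key implication by a different route than the paper. The paper never proves (b)$\Rightarrow$(a) directly. It reads (e) literally as $\mathcal{C}(\mathbb{P})+\mathcal{C}(\mathbb{P})\subseteq\Dif(\mathbb{P})$, so (b)$\Leftrightarrow$(e) by (\ref{index two}). It notes that (d)$\Leftrightarrow$(e) is elementary. It then gets (d)$\Rightarrow$(a) by a density argument: if, for an even $n\notin\Dif(\mathbb{P})$, there were no odd $b$ with $b+2$ and $b+n+2$ both composite, then $\pi(x)\ge x/8-n/4$ for all large $x$, which is impossible.

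Your congruence choice of $m$ (odd, $m\equiv 3\pmod 5$, $m\equiv -e-2\pmod 3$, with $m>3$ and $e+m>1$) is an explicit replacement for exactly that step that uses no information about the distribution of primes. It exhibits $e,m,e+m\in\mathcal{C}(\mathbb{P})$. The same triple, with $b=m$ and $a=m+e$, also refutes (d), so it would give (d)$\Rightarrow$(a) directly as well.

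Your other steps compare as follows. Your (a)$\Rightarrow$(c), using that the odd integers form a nonempty sum-free set, is cleaner than the paper's contradiction argument routed through (d). Your (c)$\Rightarrow$(a) via $e+1$ is a streamlined form of the paper's observation that $\mathcal{C}(\mathbb{P}_o)+\mathcal{C}(\mathbb{P}_o)=\mathbb{Z}$ once $\mathcal{C}(\mathbb{P}_o)$ contains an even number. Parts (f) and (g) are handled as in the paper.

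What the paper's chain buys is the link between (b) and the number-theoretic reformulations (d) and (e). You can get that link without citing anything. Since $0\in\Dif(\mathbb{P})=-\Dif(\mathbb{P})$, (d)$\Leftrightarrow$(e) follows by applying (d) to $a$ and $-b$. And (e), read with all primes as intended rather than with $\mathbb{P}_o$ (your primary reading), is literally sum-freeness of $\mathcal{C}(\mathbb{P})$, i.e.\ (b).

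Two small precision points:
\begin{enumerate}
\item Your description of $\mathcal{C}(\mathbb{P})\cap\mathbb{Z}_o$ needs $m>0$. Otherwise $m=-1$ would put $\pm1$ in it, but $1=3-2\in\Dif(\mathbb{P})$.
\item In the CRT step you need $e+m>0$, not merely $|e+m|$ large. Only then is $|e+m|+2=e+m+2$ the number divisible by $3$.
\end{enumerate}
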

\begin{proof}
The Maillet's Conjecture implies (b), since for every triples integers $a,b,a-b$ at least one of them is even.
Hence, let our primes hypothesis is satisfied and fix an arbitrary positive even integer $n$.
Motivated by \cite{StackPrimeConjecture} we claim that there is an odd integer $b$ such that both (odd numbers)
$b$, $b+n$ are not the difference of two primes, or equivalently $b+2$ and $b+n+2$ are not primes.
If the claim is not true, we conclude that $\pi(x)\geq x/8-n/4$ for all real numbers $x$ from a number on
($\pi$ is the prime-counting function) that is impossible (see \cite{Hooshmand2020} for more details). Therefore, (a) and (d) are equivalent.
Also (d) is equivalent to (e), clearly.
On the other hand (b) and (e) are equivalent by \cite[Theorem 3.30]{Hooshmand2023} and the fact that (e) indeed says
$\mathcal{C}(\mathbb{P})+\mathcal{C}(\mathbb{P})\subseteq \Dif(\mathbb{P})$. Now, we show that
(c) is equivalent to the equivalent conditions (a), (b), (d), and (e). Hence, let $\Dif(\mathbb{P}_o)\neq \mathbb{Z}_e$
then $\mathcal{C}(\mathbb{P}_o)=\mathbb{Z}_e\cup S$ for some $\emptyset\neq S\subseteq \mathbb{Z}_o$.
Thus
$$
\mathcal{C}(\mathbb{P}_o)+\mathcal{C}(\mathbb{P}_o)=(\mathbb{Z}_o+\mathbb{Z}_o)\cup (\mathbb{Z}_o+S)\cup  (S+S)=\mathbb{Z}_e\cup \mathbb{Z}_o
=\mathbb{Z},
$$
which implies $|\mathbb{Z}:\mathbb{P}_o|\neq 2$ (by (\ref{index two})). This means (c) implies (a). Conversely, let
(c) does not hold but the others are satisfied. Then $|\mathbb{Z}:\mathbb{P}_o|\geq 3$ (obviously), and so
there are distinct integers $a,b$ such that  $\Dif(\mathbb{P}_o)\cap \{0,a,b,a-b\}=\{ 0\}$ that is impossible,
because $\Dif(\mathbb{P})\cap \{a,b,a-b\}\neq \emptyset$ (by (d)), $\Dif(\mathbb{P})=\Dif(\mathbb{P}_o)\cup (\mathbb{P}_o-2)\cup (2-\mathbb{P}_o)$,
 and at lease one of numbers $a,b$, and $a-b$ is even.\\
 So far we proved that (a), (b), (c), (d), and (e) are equivalent. Now the proof is complete since
 $|\mathbb{Z}:\mathbb{P}|^+=2$ (resp. $|\mathbb{Z}:\mathbb{P}_o|^+=2$) is equivalent to
  $|\mathbb{Z}:\mathbb{P}|=2$ (resp. $|\mathbb{Z}:\mathbb{P}_o|=2$), due to $|\mathbb{Z}:\mathbb{P}|^-\geq2$
  (resp. $|\mathbb{Z}:\mathbb{P}_o|^-\geq2$).
\end{proof}
Now, by using a result of \cite{prime difference} we prove a weak version of the equivalent conditions of the above theorem.
\begin{theorem}
The subindices of the prime numbers are finite and we have $2\leq \mathbb{Z}:\mathbb{P}|^- \leq|\mathbb{Z}:\mathbb{P}|^+\leq 720$.\\
(This bound cane be improved to $2\leq \mathbb{Z}:\mathbb{P}|^- \leq|\mathbb{Z}:\mathbb{P}|^+\leq 18$ if
we assume that the primes have level of distribution $\theta$ for every $\theta < 1$, as mentioned in \cite{prime difference}).
\end{theorem}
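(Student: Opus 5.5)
The plan is to bound the upper subindex through packing, so that the whole statement reduces to a single combinatorial property of the set $\Dif(\mathbb{P})=\mathbb{P}-\mathbb{P}$ of prime differences. By Lemma~\ref{covering and packing} (and its right analogue, which is the same in the abelian group $\mathbb{Z}$), together with the description
\[
|G:A|^+=\sup\{|B|: B\subseteq G,\ AB=A\cdot B\},
\]
we have
\[
|\mathbb{Z}:\mathbb{P}|^+=\sup\{|B| : (\mathbb{P}-\mathbb{P})\cap(B-B)=\{0\}\}.
\]
So it suffices to show that every finite $B\subseteq\mathbb{Z}$ with $|B|\ge 721$ contains two distinct elements $b,b'$ with $b-b'\in\mathbb{P}-\mathbb{P}$. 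Since any infinite $B$ contains a finite subset of size $721$, infinite $B$ are excluded as well, and hence $|\mathbb{Z}:\mathbb{P}|^+\le 720$.

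The key input is the result of \cite{prime difference}, which comes from the Maynard--Tao/Polymath machinery. I will use it in the form: among any $721$ distinct integers, some pairwise difference is a difference of two primes. Under the hypothesis that the primes have level of distribution $\theta$ for every $\theta<1$, the same holds with $19$ in place of $721$. If the cited result is instead phrased for admissible $k$-tuples, I would first reduce to that case: pigeonhole $B$ into a single residue class modulo a suitable modulus to obtain an admissible subset, then apply the $k$-tuple theorem. I expect this reduction to be the main obstacle, namely matching the cited theorem's hypotheses to an arbitrary set $B$ while keeping the explicit constant $720$. I would therefore first try to invoke the cited statement directly in the ``any $721$ integers'' form.

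For the lower bounds, the inequality $|\mathbb{Z}:\mathbb{P}|^-\le|\mathbb{Z}:\mathbb{P}|^+$ is automatic. To get $|\mathbb{Z}:\mathbb{P}|^-\ge 2$ it suffices that $\Dif(\mathbb{P})\ne\mathbb{Z}$, since then $\mathcal{C}(\mathbb{P})\neq\emptyset$ and Corollary~\ref{RSFAProperties}(b) excludes the value $1$. Any odd integer $n$ with $|n|+2$ composite works: an odd prime difference must involve the prime $2$, so it has the form $\pm(p-2)$, and $n=\pm(p-2)$ would force $|n|+2=p$ to be prime. For example, $n=7$ lies in $\mathcal{C}(\mathbb{P})$ because $9$ is composite. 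Finally, finiteness of both subindices follows from the bound $720$, which completes the argument. The conditional improvement to $18$ is obtained verbatim by using the conditional form of the cited result with $19$.
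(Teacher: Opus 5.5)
Your proposal is correct and follows essentially the same route as the paper. Both arguments invoke Theorem~1.6 of the cited work in its ``any $721$ integers'' form, $D\cap\Delta(B)\neq\emptyset$ with $D\subseteq(\mathbb{P}-\mathbb{P})\cap\mathbb{N}$, to rule out $(\mathbb{P}-\mathbb{P})\cap(B-B)=\{0\}$ whenever $|B|\ge 721$, and both obtain $|\mathbb{Z}:\mathbb{P}|^-\ge 2$ from $\mathbb{P}-\mathbb{P}\neq\mathbb{Z}$. Two small remarks: your witness $7\notin\mathbb{P}-\mathbb{P}$ is a detail the paper omits, and excluding the value $1$ for the lower subindex does not need Corollary~\ref{RSFAProperties}(b), which is stated for the stable index. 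A one-element subfactor $\{b\}$ would already force $\Dif(\mathbb{P})+b=\mathbb{Z}$.
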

\begin{proof}
Denote by $D$ the set of even numbers that can be expressed in infinitely many
ways as the difference of two primes  and put
$\Delta(B)=(B-B)\cap \mathbb{N}$ (as mentioned in \cite{prime difference}). Then, using
Theorem 1.6 of \cite{prime difference} we obtain
$$
\big((\mathbb{P}-\mathbb{P})\cap (B-B)\big)\setminus \{0\}\supseteq \big((\mathbb{P}-\mathbb{P})\cap \mathbb{N}\big)\cap \Delta(B)
\supset D\cap \Delta(B)\neq \emptyset,
$$
for all subsets $B$ with $|B|\geq 721$. Thus $B\notin \SubF(\mathbb{Z}:\mathbb{P})$ for all subsets $B$ such that $|B|\geq 721$
and so $|\mathbb{Z}:\mathbb{P}|^+\leq 720$. Now the proof is complete, since $\mathbb{P}-\mathbb{P}\neq \mathbb{Z}$ or equivalently
$|\mathbb{Z}:\mathbb{P}|^-\geq2$.
\end{proof}
The above theorem has an interesting result as follows.
\begin{corollary}
The set of prime numbers is not a factor of integers. As a stronger result, it is also not a subfactor of integers.
\end{corollary}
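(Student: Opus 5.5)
The corollary has two parts: $\mathbb{P}$ is not a factor of $\mathbb{Z}$, and, more strongly, $\mathbb{P}$ is not a right (equivalently, since $\mathbb{Z}$ is abelian, left) subfactor of $\mathbb{Z}$. Since every factor is a subfactor ($\Fac_r\subseteq\SubF_r$), it suffices to prove the second part.

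Suppose, for contradiction, that $\mathbb{P}\in\SubF_r(\mathbb{Z}:A)$ for some $A\subseteq\mathbb{Z}$. By (\ref{SubfactorCondition}) this gives
\[
(A-A)\cap(\mathbb{P}-\mathbb{P})=\{0\}\quad\text{and}\quad (A-A)+\mathbb{P}=\mathbb{Z}.
\]
The first condition says that the product $A+\mathbb{P}$ is direct. Read the other way round, it says that $A$ is a set $B$ with $\mathbb{P}+B=\mathbb{P}\dot{+}B$.

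Now use the bound from the preceding theorem. There we showed that every $B$ with $|B|\geq 721$ satisfies $(B-B)\cap(\mathbb{P}-\mathbb{P})\neq\{0\}$, so every admissible $B$ has at most $720$ elements. Therefore $A$ is finite, with $|A|\leq 720$.

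Because $A$ is finite, $A-A$ is a finite set $F$. The covering condition then reads $\mathbb{P}+F=\mathbb{Z}$, meaning that finitely many translates of $\mathbb{P}$ cover $\mathbb{Z}$. In other words, $\mathbb{P}$ would be syndetic.

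This contradicts Lemma~\ref{syndetic}. The primes are bounded below and have unbounded gaps: for instance, $n!+2,\dots,n!+n$ are all composite. Hence $\mathbb{P}$ is not syndetic, and no such $A$ exists.

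A simpler density argument would also work: $|\mathbb{P}\cap[1,x]|=o(x)$, so finitely many translates cannot cover $[1,x]$.

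The only point needing care is the direction of the roles in the subfactor definition. $\mathbb{P}$ plays the role of $B$, and the set $A$ plays the role that $B$ played in the previous theorem. This is legitimate because the directness condition $(A-A)\cap(\mathbb{P}-\mathbb{P})=\{0\}$ is symmetric in the two sets. I expect no genuine obstacle here.
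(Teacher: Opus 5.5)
Your proof is correct and follows the paper's argument. The bound $|\mathbb{Z}:\mathbb{P}|^+\leq 720$ forces the partner set $A$ to be finite, so $(A-A)+\mathbb{P}=\mathbb{Z}$ would make $\mathbb{P}$ syndetic, contradicting Lemma~\ref{syndetic}; the only difference is that the paper treats the factor case separately, whereas you deduce it from the subfactor case via $\Fac_r\subseteq\SubF_r$, which is a legitimate shortcut.
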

\begin{proof}
If $\mathbb{P}+B=\mathbb{P}\dot{+}B$ for some $B\subseteq \mathbb{Z}$, then  $B\subseteq B'$ for some
$B'\in \SubF(\mathbb{Z}:\mathbb{P})$ (by Corollary 3.3 of \cite{Hooshmand2020}) and so the above theorem implies
$|B|\leq |B'|\leq 720$. Thus, if $\mathbb{Z}=\mathbb{P}\dot{+}B$ for some $B\subseteq \mathbb{Z}$, then
  $B$ is finite which requires $\mathbb{P}$ is syndetic, a contradiction. Also, if $\mathbb{P}$ is
a subfactor, then $A+\mathbb{P}=A\dot{+}\mathbb{P}$  and $(A-A)+\mathbb{P}=\mathbb{Z}$ , for some subset $A$
that is impossible since $A$ and so $A-A$ must be finite.
\end{proof}
Another interesting property of prime numbers set in view of our topic is $Dif^2(\mathbb{P})=\mathbb{Z}$.
It is equivalent to the property that for every
$N\in \mathbb{Z}$ there exist $p_1,p_2,p_3,p_4\in \mathbb{P}$ such that $N=p_1+p_2-p_3-p_4$.
\begin{theorem}[{\cite[Theorem 3.31]{Hooshmand2020}}]\label{prime difference lentgh}
The prime numbers set is a generating set of integers with $\infty$-difference length $2$.
Hence $\mathbb{Z}=\langle \mathbb{P}\setminus \{2\} \rangle^{(2)}=\langle \mathbb{P}\rangle^{(2)}$.
\end{theorem}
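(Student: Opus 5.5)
The statement says $\dl^\infty(A^0)=2$ both for $A=\mathbb{P}$ and for $A=\mathbb{P}_o=\mathbb{P}\setminus\{2\}$; the generated group is $\mathbb{Z}$ in both cases, since $3,5\in\mathbb{P}_o$. By the definition of $\langle A\rangle^{(N)}$ and the nestedness of $\{\Dif^n\}$ when $0\in A^0$, it suffices to prove two things. First, $\Dif(A^0)\neq\mathbb{Z}$, which gives $\dl^\infty(A^0)\geq 2$; note $\dl^\infty(A^0)\neq 0$ already, since $A^0$ is not symmetric ($-3\notin A^0$). Second, $\Dif^2(A^0)=\mathbb{Z}$. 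Since $\mathbb{P}_o^0\subseteq\mathbb{P}^0$, we have $\Dif^2(\mathbb{P}_o^0)\subseteq\Dif^2(\mathbb{P}^0)$, so the second point only needs proving for $\mathbb{P}_o$. The first point needs proving for $\mathbb{P}$, and then it holds for $\mathbb{P}_o$ as well.

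\textbf{Step 1 (lower bound).} We have $\Dif(\mathbb{P}^0)=(\mathbb{P}-\mathbb{P})\cup\mathbb{P}\cup(-\mathbb{P})\cup\{0\}$. Its odd elements are of the form $\pm p$ or $\pm(p-2)$ with $p$ an odd prime. Take $m=25$: neither $25$ nor $27$ is prime, and a negative sign is impossible for a positive $m$. Hence $25\notin\Dif(\mathbb{P}^0)\supseteq\Dif(\mathbb{P}_o^0)$, and both first differences are proper subsets of $\mathbb{Z}$.

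\textbf{Step 2 (even integers).} Put $D=\Dif(\mathbb{P}_o^0)$, which contains $\mathbb{P}_o-\mathbb{P}_o$ and $\pm\mathbb{P}_o$. For an even $n$ we aim to write
\[
n=(p_1-p_2)-(p_3-p_4)=(p_1+p_4)-(p_2+p_3).
\]
So it suffices that $E-E=2\mathbb{Z}$, where $E$ is the set of even numbers that are sums of two odd primes. By the Chudakov--van der Corput--Estermann theorem, $E$ has density $1$ in $2\mathbb{Z}$. For a fixed even $n$, the sets $E\cap[2,2x]$ and $(E+n)\cap[2,2x]$ each have about $x$ elements out of roughly $x$ even numbers, so for large $x$ they must intersect. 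An element of the intersection gives $e_1=e_2+n$ with $e_1,e_2\in E$, that is, $n\in E-E$.

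\textbf{Step 3 (odd integers), the main obstacle.} For an odd $n$ we use $p_1\in D$ and $p_2-p_3\in D$, so that $n=p_1-(p_2-p_3)$. Rearranged, this says $n+p_2=p_1+p_3$, so we need some odd prime $q$ with $n+q\in E$. Density one alone does not force $n+\mathbb{P}_o$ to meet $E$. I would use the Montgomery--Vaughan bound on the Goldbach exceptional set: the number of even $m\leq x$ not in $E$ is $O(x^{1-\delta})$. The primes $q\leq x$ give about $x/\log x$ distinct even numbers $n+q$, which is more than $x^{1-\delta}$ for large $x$. So some $n+q$ lies in $E$, and then $n\in D-D$. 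Negative odd $n$ are handled by symmetry, since $D=-D$.

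Combining the steps, $\Dif^2(\mathbb{P}_o^0)=\mathbb{Z}$, hence also $\Dif^2(\mathbb{P}^0)=\mathbb{Z}$, and with Step 1 we get $\dl^\infty=2$ in both cases. The delicate point is Step 3: it relies on a quantitative exceptional-set estimate rather than just density one.
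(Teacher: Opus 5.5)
Your argument is correct. The paper does not actually prove this statement; it defers to Theorem~3.31 of the 2020 paper. The only thing it records, just before the statement, is that $\Dif^2(\mathbb{P})=\mathbb{Z}$ means every integer has the form $p_1+p_2-p_3-p_4$. So your proposal is a self-contained substitute rather than a variant of an argument given here.

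Compared with that bare representation claim, your version does more:
\begin{itemize}
\item It works with $\mathbb{P}^0$ and $\mathbb{P}_o^0$, which is exactly what the notation $\langle\,\cdot\,\rangle^{(2)}$ in the statement requires.
\item It gives an explicit witness $25\notin\Dif(\mathbb{P}^0)$ for the lower bound $\dl^\infty\geq 2$.
\item It handles $\mathbb{P}_o$ and $\mathbb{P}$ together by monotonicity of $\Dif$.
\end{itemize}

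The paper's reformulation concerns $\Dif^2(\mathbb{P})$, with no $0$ adjoined, and there your Step~3 does not apply literally. An odd prime $p_1$ lies in $\Dif(\mathbb{P})$ only when $p_1+2$ is also prime. The fix is to run Step~3 with $p_1-2\in\Dif(\mathbb{P})$ in place of $p_1$, that is, to find an odd prime $q$ with $n+2+q\in E$. The same counting argument does this, so $\Dif^2(\mathbb{P})=\mathbb{Z}$ follows as well. Step~2 needs no change, since it already lands in $\Dif^2(\mathbb{P}_o)\subseteq\Dif^2(\mathbb{P})$.

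Finally, Montgomery--Vaughan is stronger than Step~3 needs. Any exceptional-set bound of size $o(x/\log x)$ beats the roughly $x/\log x$ shifted values $n+q$. The classical Chudakov--van der Corput--Estermann argument already gives a bound of $O\bigl(x(\log x)^{-A}\bigr)$ for every $A$. So both parity cases can rest on that one classical theorem.
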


The next conjecture  is weaker than the Maillet's conjecture due to Theorem~\ref{prime equivalence}.\\
\textbf{Conjecture I.} The set of primes is index stable (in integers).\\
\textbf{Index stability and $\infty$-difference length of perfect squares.}
In view of index stability, the sequences of perfect squares (or higher powers) starting from a given number are particularly interesting and challenging to analyze. Moreover, their difference sets and the $\infty$-difference length of such sequences are noteworthy.
\begin{lemma}\label{perfect square}
For every integer $n_0\geq 0$ we have
\begin{equation}\label{perfectcomplement}
\mathcal{C}(\{n_0^2, (n_0+1)^2, \cdots\})=\left\{\begin{array}{ll} 4\mathbb{Z}+2  &  \; n_0=0\\
(4\mathbb{Z}+2)\cup \{\pm1,\pm3,\cdots,\pm(2n_0-1)\}\cup \{\pm4,\pm8,\cdots,\pm4n_0\}\; & \;\;\;\; n_0>0
\end{array}
\right.
\end{equation}
More over, $\{0,1,4,9, \cdots\}$ is index stable with index $2$, but $\{1,4,9, \cdots\}$ is not index stable and
 $|\mathbb{Z}:\{1,4,9, \cdots\}|^-=3$, $|\mathbb{Z}:\{1,4,9, \cdots\}|^+=4$ (this is an improvement of an statement
 in \cite{Hooshmand2020}$(3.4(c))$.
 \end{lemma}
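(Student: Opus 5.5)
Set $S_{n_0}=\{n^2:n\ge n_0\}$. I will first compute the difference set and then obtain the subindices, either from the Example~\ref{square} computation or from the criterion (\ref{index two}).

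The basic fact is that an integer $d$ is a difference of two squares $m^2-k^2$ with $m,k\ge 0$ exactly when $d\not\equiv 2\pmod 4$. For odd $d$ write $d=(2k+1)\cdot 1$, and for $d=4t$ write $d=(t+1)^2-(t-1)^2$. Conversely, $m^2-k^2\equiv 2\pmod 4$ is impossible. This already settles the case $n_0=0$, giving $\mathcal{C}(S_0)=4\mathbb{Z}+2$. For $n_0>0$ the set $\Dif(S_{n_0})$ is symmetric, so it suffices to decide which $d>0$ with $d\not\equiv 2\pmod 4$ can be written as $d=(m-k)(m+k)$ with $k\ge n_0$. Put $u=m-k$ and $v=m+k$. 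These satisfy $u\equiv v\pmod 2$, $0<u\le v$, and $k=(v-u)/2$.

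For the representation to exist we need a factorization $d=uv$ with $v-u\ge 2n_0$. The factorization giving the largest $k$ is $u=1$ when $d$ is odd, and $u=2$, $v=d/2$ when $4\mid d$. The largest admissible $k$ is therefore $(d-1)/2$ for odd $d$ and $(d-4)/4$ for $4\mid d$. The condition $k\ge n_0$ fails precisely when $d\le 2n_0-1$ (odd $d$) or $d\le 4n_0$ (multiples of $4$). Adding the negatives of these values gives (\ref{perfectcomplement}). I expect this part to be routine.

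For the index statements, index $2$ for $S_0$ follows from (\ref{index two}), since $4\mathbb{Z}+2$ is sum-free: the sum of two of its elements lies in $4\mathbb{Z}$. This was also noted in Example~\ref{square}. For $A=S_1=\{1,4,9,\ldots\}$ we have $\mathcal{C}(A)=(4\mathbb{Z}+2)\cup\{\pm1,\pm4\}$. By Proposition~\ref{Countable groups}(b) the RSFA is complete, and by Remark~\ref{RSFA} we may take $g_0=0$ and restrict $g_1$ to $\{1,4\}\cup\{4t+2:t\ge 0\}$. We then run the case analysis of Example~\ref{square}. In each branch we compute $\mathcal{C}(A)\cap(\mathcal{C}(A)+g_1)$ and the subsequent intersections explicitly, and check that the process terminates after at most three additional elements. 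The main obstacle is the bookkeeping in the branch $g_1=4t_1+2$. For $t_1=0$ this intersection is the finite set $\{-2,6,1,-4,4\}$, and for $t_1>0$ it is $\{4t_1-2,4t_1+6,\pm4\}$. Each choice of $g_2$ must then be checked to leave at most one compatible $g_3$ and no $g_4$. This holds because the candidates are confined to the finite sets $\{\pm1,\pm4\}\cup\{g\pm4\}$: two elements of $4\mathbb{Z}+2$ differ by a member of $\mathcal{C}(A)$ only if they differ by $\pm4$.

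The enumeration produces subfactors of size $3$ (for example $\{0,1,2\}$) and of size $4$ (for example $\{0,4,6,10\}$), and none larger. Hence $|\mathbb{Z}:A|^-=3$ and $|\mathbb{Z}:A|^+=4$, and $A$ is not index stable. As a sanity check for the lower bound, $A$ cannot have a subfactor of size $2$. Indeed, the reformulation in (\ref{index two}) would require $\mathcal{C}(A)\cap(\mathcal{C}(A)+x)=\emptyset$ for some, and in fact every, $x$ in the relevant branch, and this fails because $1\in\mathcal{C}(A)\cap(\mathcal{C}(A)+2)$ and similar coincidences occur for every $x\in\mathcal{C}(A)$.
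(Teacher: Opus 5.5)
Your proof is correct. For the formula for $\mathcal{C}(S_{n_0})$ you take a different route from the paper. You characterize membership in $\Dif(S_{n_0})$ directly through factorizations $d=uv$ with $u=m-k$, $v=m+k$, using that $k=(d/u-u)/2$ is maximized by $u=1$ (odd $d$) or $u=2$ ($4\mid d$). The paper instead inducts on $n_0$. Passing from $S_{n_0}$ to $S_{n_0+1}$ can only delete differences $\pm(k^2-n_0^2)$, and the identities $D=\bigl(\tfrac{D+1}{2}\bigr)^2-\bigl(\tfrac{D-1}{2}\bigr)^2=\bigl(\tfrac{D}{4}+1\bigr)^2-\bigl(\tfrac{D}{4}-1\bigr)^2$ show that only $k=n_0+1,n_0+2$ are actually lost. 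This gives $\mathcal{C}(S_{n_0+1})=\mathcal{C}(S_{n_0})\,\dot{\cup}\,\{\pm(2n_0+1),\pm(4n_0+4)\}$. Your argument is non-inductive and gives, for each $d$, the exact threshold $n_0$ at which $d$ leaves the difference set. It also justifies the non-representability of $2n_0+1$ and $4n_0+4$ by squares $\ge (n_0+1)^2$, which the paper treats as evident. The paper's version instead shows how the complement grows by exactly two symmetric pairs per removed square. For $S_0$ and $S_1$ you use essentially the same tools as the paper: sum-freeness of $4\mathbb{Z}+2$, then the RSFA case analysis of Example~\ref{square}.

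One inaccuracy in your sketch: it is not true that each $g_2$ leaves at most one compatible $g_3$. For $g_1=2$, $g_2=-2$ both $g_3=\pm4$ are admissible, and for $g_1=2$, $g_2=4$ both $g_3=-2$ and $g_3=6$ are. What holds, and what you need, is that the admissible $g_3$'s are pairwise incompatible, so no $g_4$ exists. A branch-free way to get $|B|\le 4$ for $0\in B\in\SubF(\mathbb{Z}:S_1)$ uses three facts:
\begin{enumerate}
\item $B\cap(4\mathbb{Z}+2)$ has at most two elements, since their differences must be $\pm4$.
\item $1$ (resp.\ $-1$) is compatible with no element of $4\mathbb{Z}+2$ except $2$ (resp.\ $-2$).
\item $\pm4$ are incompatible with $\pm1$ and with each other.
\end{enumerate}
Also, carry out the enumeration yourself rather than copying the list in Example~\ref{square}, which contains slips. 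There, $\{0,2,4\}$ is not maximal since $6$ can be added, and $\{0,2,-2,6\}$ is not admissible because $8=9-1\in\Dif(S_1)$. The extremal sizes $3$ and $4$ are unaffected.
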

\begin{proof}
Put $\mathcal{A}_{n_0}=\{n_0^2, (n_0+1)^2, \cdots\}$. First note that
$$
\mathcal{C}(\mathcal{A}_{n_0+1})=\mathcal{C}(\mathcal{A}_{n_0})\dot{\cup}(\Dif(\mathcal{A}_{n_0})\cap \mathcal{C}(\mathcal{A}_{n_0+1}))
$$
Hence
$$
x\in \mathcal{C}(\mathcal{A}_{n_0+1})\Leftrightarrow x\in \mathcal{C}(\mathcal{A}_{n_0})  \vee ( x=q^2-r^2 \mbox{ , for some } q,r\geq n_0\;
\wedge \; x\neq m^2-n^2 \mbox{ , for all } m,n\geq n_0+1)
$$
$$
\Leftrightarrow x\in \mathcal{C}(\mathcal{A}_{n_0})  \vee ( x=\pm(k^2-n_0^2) \neq m^2-n^2\mbox{ , for some } k\geq n_0+1
\mbox{ , all } m,n\geq n_0+1)
$$
But, the identities
$$
k^2-n_0^2=(\frac{k^2-n_0^2+1}{2})^2-(\frac{k^2-n_0^2-1}{2})^2=(\frac{k^2-n_0^2}{4}+1)^2-(\frac{k^2-n_0^2}{4}-1)^2
$$
imply that the above equivalent conditions hold if and only if $k\in \{n_0+1,n_0+2\}$, and so
$$
\mathcal{C}(\mathcal{A}_{n_0+1})=\mathcal{C}(\mathcal{A}_{n_0})\dot{\cup} \{\pm(2n_0+1),\pm(4n_0+4)\}
$$
Now this identity together with $\mathcal{C}(\mathcal{A}_{0})=4\mathbb{Z}+2$ require (\ref{perfectcomplement}) by the induction.\\
We know $|\mathbb{Z}:\mathcal{A}_0|=2$, and
$\mathcal{C}(\mathcal{A}_{1})=(4\mathbb{Z}+2)\cup \{\pm1 , \pm4\}$. Hence, due to Remark~\ref{RSFA} we can chose
$g_0=0$ and $g_1\in \{ 1,4, 4t_1+2\}$, for some $t_1\geq 0$, in the RSFA. Thus we have the following cases:\\
\underline{$g_1=1$}. we observe that $g_2\in \mathcal{C}(\mathcal{A}_{1})\cap 1+ \mathcal{C}(\mathcal{A}_{1})_{1}=\{2,-1\}$, and we get the subfactors $\{0,1,2\}$ and
$\{0,1,-1\}=-1+\{0,1,2\}$.\\
\underline{$g_1=4$}. we have $g_2\in \mathcal{C}(\mathcal{A}_{1})\cap 4+ \mathcal{C}(\mathcal{A}_{1})=4\mathbb{Z}+2$, and so
$g_2=4t_2+2$, for some integer $t_2$. Since $g_3\in 4\mathbb{Z}+2\cap 4t_2+2+ \mathcal{C}(\mathcal{A}_{1})=\{4t_2-2, 4t_2+6\}$,
we have two choices for $g_3$ that give us the subfactors $\{0,4,4t_2+2, 4t_2-2\}$ and $\{0,4,4t_2+2, 4t_2+6\}$.\\
\underline{$g_1=4t_1+2$} ($t_1\geq 0$). in this case
$$g_2\in \mathcal{C}(\mathcal{A}_{1})\cap 4t_1+2+ \mathcal{C}(\mathcal{A}_{1})=\left\{\begin{array}{ll} \{-2,6,1,-4,4\} &  \; t_1=0\\
\{4t_1-2, 4t_1+6,-4,4\}\; & \;\;\;\; t_1>0
\end{array}
\right.$$

Thus we have the five subcases $g_2=1$, $g_2=4t_1-2\geq -2$, $g_2=4t_1+6\geq 6$,$g_2=-4$, or $g_2=4$  that give us the subfactors as follows:
$$
\{0,1,2\}, \{0,2,4\}, \{0,2,-2,-4\}, \{0,2,-2,4\}, \{0,2,-2,6\}, \{0, 4t_1+2, 4t_1-2,-4\}, \{0, 4t_1+2, 4t_1+6,4\}.
$$

Therefore
$$
\SubF^1(\mathbb{Z}:\{1,4,9,\ldots\})=\Big\{ \{0,1,2\}, \{0,1,-1\},\{0,2,4\}, \{0,2,-2,-4\},
$$
$$
 \{0,2,-2,4\},\{0,2,-2,6\}, \{0, 4t_1+2, 4t_1-2,-4\}, \{0, 4t_1+2, 4t_1+6,4\},
$$
$$\{0,4,4t_2+2, 4t_2-2\},\{0,4,4t_2+2, 4t_2+6\}\; : \; t_1\in \mathbb{Z}_+ , t_2\in \mathbb{Z} \Big\}
$$
Hence the proof is complete.
\end{proof}
\begin{problem}
Calculate the $n$-differences ($\Dif^n$), $\infty$-difference length, and subindices of $\mathcal{A}_{n_0}$,
for a given non-negative integer $n_0$.
\end{problem}
\textbf{Index stability and $\infty$-difference length of $A(k)$ and $A[k]$.}
The sequences considered in this part (two forms of powers) are highly significant and challenging, exhibiting various states of index stability and values of infinite difference lengths. By utilizing previously obtained results, we can determine some of their related states in the following lemma, while others remain as unsolved problems.
\begin{lemma}\label{Waringlemma}
For a fixed positive integer \(k\), put
\[
A(k)=\{m^k:m\in \mathbb{Z}^+\cup\{0\}\}\; , \; A[k]=\{k^m:m\in \mathbb{Z}^+\cup\{0\}\}.
\]
Denote by \(g(k)\) the minimum number of \(k\)th powers of naturals needed to represent all positive integers. Then
\begin{enumerate}[(a)]
  \item The \(\infty\)-difference length of \(A(k)\) is finite and \(dl^\infty (A(k))\leq g(k)\).
    \item \(A(1)\) and \(A(2)\) are index stable with indices one and two, respectively. But \(A(2)\setminus \{0\}\) is not index stable,
    and we have $|\mathbb{Z}:A(2)\setminus \{0\}|^-=3$, $|\mathbb{Z}:A(2)\setminus \{0\}|^+=4$.
     \item The \(\infty\)-difference length of \(A[2]\) is infinite.
      \end{enumerate}
\end{lemma}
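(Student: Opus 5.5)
Part (a) relies on the identity $\Dif^n(A)=\Dif(A)^{2^{n-1}}$ together with the diameter formula (\ref{diam}). Since $0\in A(k)$, we have $A(k)\subseteq \Dif(A(k))$ and $-A(k)\subseteq\Dif(A(k))$. By the definition of $g(k)$ (Waring's problem, Hilbert's theorem), every positive integer is a sum of at most $g(k)$ elements of $A(k)$; padding with zeros, exactly $g(k)$ elements suffice. Negating, every negative integer is a sum of $g(k)$ elements of $-A(k)$. Hence, with $S=\Dif(A(k))$, which is symmetric and contains $0$, we get $S^{g(k)}\supseteq g(k)A(k)\cup(-g(k)A(k))=\mathbb{Z}$, so $\mathrm{diam}(\mathbb{Z},S)\le g(k)$. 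Formula (\ref{diam}) then gives $\dl^\infty(A(k))\le\lceil\log_2 g(k)\rceil+1$, which is finite. To reach the stated bound $\dl^\infty(A(k))\le g(k)$, it remains to check $\lceil\log_2 m\rceil+1\le m$ for every positive integer $m$. This is elementary: for $m=1,2$ it holds with equality, and for larger $m$ it follows from $m\le 2^{m-1}$. Alternatively, one can argue directly that $\Dif^{n}(A)\supseteq \Dif(A)+\cdots+\Dif(A)$ with $n$ summands, since $0\in\Dif(A)$ and the sets are nested, and conclude $\Dif^{g(k)}(A(k))=\mathbb{Z}$ at once.

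Part (b) is assembled from earlier results. For $A(1)=\mathbb{Z}^+\cup\{0\}$ we have $A(1)-A(1)=\mathbb{Z}$, so Corollary~\ref{RSFAProperties}(b) gives index $1$. The statements about $A(2)$ and $A(2)\setminus\{0\}$, including the subindices $3$ and $4$, are exactly Lemma~\ref{perfect square} (equivalently Example~\ref{square}), so we only cite them.

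Part (c) is the claim of Example~\ref{WaringExample}, except that there the set was $\{2^m:m\ge1\}\cup\{0\}$, whereas here $A[2]=\{2^m:m\ge0\}$ and does not contain $0$. Our plan is to reduce to the counting argument given there. Translation does not change iterated differences, since $\Dif^n(A)=\Dif^n(A-a)$; so we may replace $A[2]$ by $A[2]-1=\{2^m-1:m\ge0\}$, which contains $0$. We then rerun the counting in the same way. At most $m+1$ elements of the set lie in $[0,2^m]$, and a difference of absolute value at most $2^m$ must use only elements not exceeding roughly $2^{m+1}$, since larger powers are too far apart. Consequently $\Dif^n$ has at most about $(m+2)^{2^n}$ elements in $[-2^m,2^m]$.

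On the other hand, the group generated is $\mathbb{Z}$, because $2-1=1$ lies in the difference set. If $\dl^\infty=N<\infty$, then $\Dif^N=\mathbb{Z}$, which would force $(m+2)^{2^N}\ge 2^{m+1}+1$ for every $m$. This is impossible for large $m$. We expect the main care to be needed in justifying the localization step, namely that only elements of size $O(2^m)$ contribute to differences bounded by $2^m$ at each iteration. We would prove this by induction on $n$, bounding elements of $\Dif^n$ inside $[-2^m,2^m]$ by differences of elements of $\Dif^{n-1}$ lying in $[-2^{m+c_n},2^{m+c_n}]$ for a constant $c_n$. This yields a polynomial-in-$m$ count against the exponential count of $2^{m+1}+1$ integers.
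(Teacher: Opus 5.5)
Your parts (a) and (b) follow the paper. Part (a) rests on Waring's theorem, and your route through (\ref{diam}) even gives the sharper bound $\dl^\infty(A(k))\le\lceil\log_2 g(k)\rceil+1$. Part (b) is a citation of Lemma~\ref{perfect square}. For (c) the paper simply cites the counting argument of Example~\ref{WaringExample}. You are right that the Example concerns $A'=\{0\}\cup\{2^m:m\ge 1\}$ rather than $A[2]$. Your translation works, but the bridge is simpler: since $\pm 2^a=\pm(2^{a+1}-2^a)$, one has $\Dif(A')=\{2^a-2^b:a,b\ge 1\}=2\Dif(A[2])$. Hence $\Dif^n(A')=2\Dif^n(A[2])$ for all $n\ge 1$, and $\dl^\infty(A')=\dl^\infty(A[2])$.

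The one genuine gap is the localization step you flag; the Example passes over the same point, saying only that larger elements ``don't play a role''. As described, your induction on $n$ does not close. A small element of $\Dif^n$ may be a difference of two huge elements of $\Dif^{n-1}$, and a hypothesis about the small elements of $\Dif^{n-1}$ says nothing about those.

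The missing ingredient is the explicit form $\Dif^n(A)=\Dif(A)^{2^{n-1}}$. With $K=2^{n-1}$, every $z\in\Dif^n(A[2])$ can be written $z=\sum_{i=1}^{K}2^{a_i}-\sum_{i=1}^{K}2^{b_i}$. Suppose $|z|\le 2^m$. The terms with exponent $<m$ contribute at most $K2^{m-1}$ in absolute value. So the terms with exponent $\ge m$ add up to $j2^m$ with $|j|\le K$. Thus $j$ is a sum of $p$ powers of two minus a sum of $q$ powers of two, with $p,q\le K$.

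Only finitely many triples $(p,q,j)$ occur. So there is a constant $E_K$ such that each realizable triple has such a representation with all exponents at most $E_K$. Substituting it, multiplied by $2^m$, gives a representation of $z$ of the same shape with all $2K$ exponents at most $m+E_K$. Therefore $|\Dif^n(A[2])\cap[-2^m,2^m]|\le (m+E_K+1)^{2^n}$, which is polynomial in $m$. On the other hand, $\Dif^N(A[2])=\mathbb{Z}$ would require at least $2^{m+1}+1$ elements in $[-2^m,2^m]$ for every $m$.

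Alternatively, merge repeated powers into a signed binary expansion with at most $2^n$ nonzero digits. Then use that the non-adjacent form has minimal weight and, for $|z|\le 2^m$, occupies only positions $0,\dots,m+1$.

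Splitting the $K$ bounded differences into two halves also yields exactly the inclusion you wanted, with $c_n=E_K+n$. But it comes from this cancellation argument, not from an induction on $n$.
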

\begin{proof}
(a) Then Waring's problem (see \cite{WaringBook}) implies that \(\infty\)-difference length of \(A(k)\) is finite and \(dl^\infty (A(k))\leq g(k)\).
Note that \(dl^\infty (A(1))=1=g(1)=1\) but \(dl^\infty (A(2))=2<g(2)=4\), since  \(\Dif(A(2))=\mathbb{Z}_o\cup 4\mathbb{Z}\) and so \(\Dif^2(A(2))=\Dif^3(A(2))=\Dif^\infty(A(2))=\mathbb{Z}\)  (we arrive at a related open problem later).\\
(b), (c) By Lemma~\ref{perfect square} and Example~\ref{WaringExample}, respectively.
\end{proof}
Followed by the above lemma we arrived at the next problems.\\
\begin{problem}
Let $k$ be a fixed positive integer and $A(k)$, $A(k)$ as mentioned above.
\begin{enumerate}[(i)]
\item
Calculate \(\infty\)-difference length of \(A(k)\) (that is finite),  upper and lower subindices of \(A(k)\), and determine its index stability.
\item

  Adapted from \cite{stackwaring2022}, we claim that \(dl^\infty (A[k])=\infty\), for all \(k\geq 2\),
and so \(\mathbb{Z}=\langle A[k] \rangle^{(\infty)}\) (note that \(dl^\infty (A[1])=1\)). Is it true?
 \item
 Prove or disprove: \(A[k]\) is index stable with the infinite index (\(=\aleph_0\)). This result can be extended for all fixed integers \(k\).

\end{enumerate}
\end{problem}
\textbf{Index stability and $\infty$-difference length of Fibonacci numbers.}

Let $\{F_n\}_{n=1}^\infty $ be the Fibonacci sequence (defined by
$ F_n=F_{n-1}+F_{n-2}$ with $F_1=F_2=1$), and denote by $F$ the set of all Fibonacci numbers.
It does not satisfy the conditions of Theorem~\ref{limit} or Corollary~\ref{limitcor}. Because, $F_n$ is strictly increasing for
$n\geq2$, and
$F_n-2F_{n-1}=-F_{n-3}$ for all $n\geq 3$ (and so it tends to $-\infty$). But, however $F-F$ is not syndetic
as one see in the proof of the next theorem.
\begin{theorem}
The Fibonacci sequence is index stable and $|\mathbb{Z}:\{F_n\}|=\aleph_0$.
\end{theorem}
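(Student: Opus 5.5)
The plan is to invoke Corollary~\ref{integer aleph_0 index}: it suffices to show that $D=F-F$ has unbounded gaps. Since $F=\{1,2,3,5,8,\ldots\}=\{F_n:n\geq 2\}$ is infinite, Corollary~\ref{A-A Syndetic} tells us that $D$ is syndetic if and only if it has bounded gaps. So the whole task is to exhibit, for each large $n$, an interval of integers of length tending to infinity that contains no element of $D$.

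Because $D$ is symmetric, we only need positive differences $d=F_i-F_j$ with $i>j\geq 2$. For fixed $i$, these differences lie in $[F_i-F_{i-1},\,F_i-1]=[F_{i-2},\,F_i-1]$. Moreover, the largest differences with top term $F_i$ are $F_i-1, F_i-2, F_i-3, F_i-5,\ldots$, and the smallest ones are $F_i-F_{i-1}=F_{i-2}$ and $F_i-F_{i-2}=F_{i-1}$. Next, for $j\leq i-3$ we have $F_i-F_j\geq F_i-F_{i-3}=2F_{i-2}$, using $F_i=F_{i-1}+F_{i-2}=2F_{i-2}+F_{i-3}$. So the only positive differences lying in the open interval $(F_{n},2F_{n})$ can come from a pair $(i,j)$ satisfying one of the following:
\begin{itemize}
\item[(1)] $j\in\{i-1,i-2\}$, which gives the values $F_{i-2}$ and $F_{i-1}$; these are Fibonacci numbers, and no Fibonacci number lies strictly between $F_n$ and $F_{n+1}$;
\item[(2)] $j\leq i-3$, which gives values $\geq 2F_{i-2}$, and these lie below $2F_n$ only if $i\leq n+1$; in that case the value is at most $F_{n+1}-1$.
\end{itemize}

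I therefore propose the candidate gap $J_n=(F_{n+1},\,2F_n)$, whose length $F_n-F_{n-1}-1=F_{n-2}-1$ tends to infinity. To verify that $J_n\cap D=\emptyset$, take $d=F_i-F_j\in J_n$ and split into cases according to $i$, in the spirit of Example~\ref{a power example} for $2^n$:
\begin{itemize}
\item[(i)] If $i\leq n+1$, then $d\leq F_{n+1}-1$, which contradicts $d>F_{n+1}$.
\item[(ii)] If $i\geq n+3$, then $d\geq F_{i-2}\geq F_{n+1}$; the only way to stay below $2F_n$ is $d=F_{i-2}$ or $d=F_{i-1}$, which is a Fibonacci number, so it is at most $F_{n+1}$ or at least $F_{n+2}>2F_n$. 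Both are excluded from $J_n$. Otherwise $j\leq i-3$ gives $d\geq 2F_{i-2}\geq 2F_{n+1}$, which is again too large.
\item[(iii)] If $i=n+2$, then $d=F_{n+2}-F_j$. For $j=n+1$ or $j=n$ we get $d=F_n$ or $d=F_{n+1}$, which are not in $J_n$. For $j\leq n-1$ we get $d\geq F_{n+2}-F_{n-1}=2F_n$, which is also not in $J_n$.
\end{itemize}
In every case we reach a contradiction, so $D$ has unbounded gaps, and Corollary~\ref{integer aleph_0 index} yields $|\mathbb{Z}:\{F_n\}|=\aleph_0$.

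The main obstacle is the boundary bookkeeping in case (iii) and in case (ii) with $j\in\{i-1,i-2\}$, where the identities $F_{n+2}-F_{n-1}=2F_n$ and $F_i-F_{i-3}=2F_{i-2}$ must be checked precisely. We also have to make sure that the repeated value $F_1=F_2=1$ and small indices do not break anything; this is handled by taking $n$ large, say $n\geq 4$, and working with $j\geq 2$.
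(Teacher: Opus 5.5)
Your proposal is correct and is essentially the paper's proof. Since $2F_n=F_{n+1}+F_{n-2}$, your gap $(F_{n+1},2F_n)$ is exactly the paper's gap $(F_k,F_k+F_{k-3})$ with $k=n+1$; your cases $i\le n+1$, $i=n+2$, $i\ge n+3$ are the paper's cases $m\le k$, $m=k+1$, $m\ge k+2$, and both arguments conclude from the non-syndeticity of $F-F$.
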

\begin{proof}
We show that the set \(F - F = \{F_m - F_n : m, n \in \mathbb{N}\}\) is not syndetic in \(\mathbb{Z}\). Indeed, for any given \(L\), there exists an interval of length \(L\) containing no element of \(F - F\).
 Choose \(k\) large enough so that \(F_{k-3} > L\), and consider the interval \((F_k, F_k + L]\).
We verify that no difference \(F_m - F_n\) with \(m > n\) lies in this interval.

Case 1: \(m \leq k\). Then \(F_m \leq F_k\), so \(F_m - F_n \leq F_k - 1 < F_k\).

Case 2: \(m = k+1\). For \(F_{k+1} - F_n > F_k\), we need \(F_n < F_{k+1} - F_k = F_{k-1}\). The largest such \(F_n\) is \(F_{k-2}\), and
  \[
  F_{k+1} - F_{k-2} = F_k + F_{k-3} > F_k + L,
  \]
  since \(F_{k-3} > L\). For any smaller \(F_n\), the difference is even larger.

Case 3: \(m \geq k+2\). The smallest difference exceeding \(F_k\) occurs when \(m = k+2\) and \(F_n = F_k\), yielding
  \[
  F_{k+2} - F_k = F_{k+1} = F_k + F_{k-1} > F_k + L,
  \]
  because \(F_{k-1} > F_{k-3} > L\). For larger \(m\), differences are larger still.
Therefore, \(F - F\) is not syndetic and so $|\mathbb{Z}:\{F_n\}|=\aleph_0$.
\end{proof}
Another thing that should be checked for  the Fibonacci numbers is its  $\infty$-difference length. With
do attention to the note after Lemma 1.1, we give a conjecture about it.\\
{\bf Conjecture II.} The $\infty$-difference length of Fibonacci numbers is infinite (i.e., $dl^\infty(F)=\infty$),
and so  $\mathbb{Z}=\langle F \rangle^{(\infty)}$.\\
\\
\textbf{Project II}(A big project for OEIS sequences of integers).\\
Thus far, one sees that the subindices and index stability (and possibly the infinite difference lengths) of very broad classes of common integer sequences can be determined using the obtained results, criteria, and Table~\ref{table2}. However, there are some OEIS sequences that should calculate their
subindices and $\infty$-difference lengths, and determine their index stability, separately.
\subsection{Sub-indices, index stability, and $\infty$-difference length of sets of rational and real numbers}
The following is a multi-part criterion for the index stability of rational sets (and sequences). According to this criterion, any non-index stable rational subset must be an unbounded, cardinally balanced generating set for $(\mathbb{Q}, +)$ whose difference set is syndetic.
\begin{theorem}[Rationals index stability Criteria]\label{rational criteria}
All the following conditions are necessary for $A$ to be non-index stable.\\
$($a$)$ $A$ and $A^c$ are infinite,\\
$($b$)$ $A$ is unbounded,\\
$($c$)$ $A$ is a generating but not difference-generating subset $($ i.e., $\langle A\rangle=\mathbb{Q}$ and $A-A\neq \mathbb{Q}$  $)$,\\
$($d$)$ $A-A$ is syndetic but not a subgroup,\\
$($e$)$ the equation $(A-A)\cap (X-X)=\{0\}$ has a finite maximal solution in $2^\mathbb{Q}$.\\
Hence, if a subset $A$ of rationales does not satisfy one of the conditions, then $A$ is index stable with indices $\aleph_0$ or 1.
\end{theorem}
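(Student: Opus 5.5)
The plan is to argue by contraposition, one condition at a time. Assume $A\subseteq\mathbb{Q}$ fails one of (a)--(e), and show that $A$ is index stable, with index $1$ or $\aleph_0$. Throughout, $\mathbb{Q}$ is countably infinite and abelian, so left and right notions coincide. Index stability therefore reduces to $|\mathbb{Q}:A|^-=|\mathbb{Q}:A|^+$.

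\emph{Condition (a).} If $A$ or $A^c$ is finite, then $A$ is not cardinally balanced. Theorem~\ref{InfiniteSubindexEquation}(b) then gives index $\aleph_0$ (when $A$ is finite) or index $1$ (when $A$ is co-finite); the empty set is trivial.

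\emph{Condition (b).} If $A$ is bounded, Example~\ref{InfiniteSunindexExample} gives index $\aleph_0$. The underlying reason is that $A-A$ is bounded, so finitely many translates cannot cover $\mathbb{Q}$. Hence $A-A$ is not syndetic, and Proposition~\ref{Countable groups}(c) applies.

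\emph{Condition (c).} There are two ways it can fail.
\begin{itemize}
\item If $\langle A\rangle\neq\mathbb{Q}$, I would show that every proper subgroup $H$ of $\mathbb{Q}$ has infinite index. Indeed, if $|\mathbb{Q}:H|=n<\infty$, then $n\mathbb{Q}\subseteq H$ while $n\mathbb{Q}=\mathbb{Q}$, so $H=\mathbb{Q}$. (This is the remark already used in Corollary~\ref{Cor Countable groups}.) Proposition~\ref{Countable groups}(a) then gives $|\mathbb{Q}:A|=\aleph_0$.
\item If $A-A=\mathbb{Q}$, then $\mathcal{C}(A)=\emptyset$, and Corollary~\ref{RSFAProperties}(b), or Theorem~\ref{BasicTheorem}(c), gives index $1$.
\end{itemize}

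\emph{Condition (d).} Again there are two cases.
\begin{itemize}
\item If $A-A$ is not syndetic, Proposition~\ref{Countable groups}(c) gives the right index $\aleph_0$. Since the group is abelian, this is full index stability.
\item If $A-A$ is a subgroup, Theorem~\ref{BasicTheorem}(c) gives index stability with $|\mathbb{Q}:A|=|\mathbb{Q}:A-A|$. Because $A-A\leq\mathbb{Q}$, the subgroup observation from the first case of (c) forces this index to be $1$ or $\aleph_0$.
\end{itemize}

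\emph{Condition (e).} Suppose $(A-A)\cap(X-X)=\{0\}$ has no finite maximal solution. By (\ref{maximal in C(A)}), the maximal solutions are exactly the (translated) right subfactors relative to $A$, so every subfactor is infinite. In a countable group this means every subfactor has cardinality $\aleph_0$. Therefore $|\mathbb{Q}:A|^-=|\mathbb{Q}:A|^+=\aleph_0$, which is Corollary~\ref{InfiniteSunindexCriteria2}(b) combined with the countability bound from Theorem~\ref{InfiniteSubindexEquation}.

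The main point needing care is the claim that failure of (c) or (d) yields only the indices $1$ or $\aleph_0$, and not some finite index greater than $1$. This is exactly where divisibility of $\mathbb{Q}$ enters, through the absence of proper finite-index subgroups. Everything else is a direct citation of earlier results.
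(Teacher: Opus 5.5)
Your proposal is correct and follows essentially the paper's route. The paper's proof consists only of citing Theorem~\ref{InfiniteSubindexEquation}, Theorem~\ref{BasicTheorem}, Proposition~\ref{Countable groups} and Corollary~\ref{Cor Countable groups}, and your condition-by-condition contrapositive unpacks exactly those citations. In particular, your divisibility argument ($n\mathbb{Q}=\mathbb{Q}$, so $\mathbb{Q}$ has no proper subgroup of finite index) is precisely the hypothesis the paper invokes through Corollary~\ref{Cor Countable groups}.
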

\begin{proof}
The results are concluded from Theorem~\ref{InfiniteSubindexEquation},\ref{BasicTheorem}, Proposition~\ref{Countable groups}, and
Corollary~\ref{Cor Countable groups}.
\end{proof}
\begin{example}
 The set of all Bernoulli numbers is index stable in rationales with infinite index.
Because the Von Staudt-Clausen theorem \cite{vonStaudt1840} implies that the denominators of Bernoulli numbers \(B\) are square-free,
and rational numbers with square-free denominators make a proper subgroup of \(\mathbb{Q}\). Now, Corollary \ref{CountableCor}
implies that its index is \(\aleph_0\).
Also, we have  $|\mathbb{Q}:\{ \frac{1}{n}:n\in \mathbb{N}\}|=\aleph_0$ since it is bounded and so its difference set is not syndetic.
The set $\{ \frac{p}{q}:p,q\in \mathbb{P}\}$ dose not generate $\mathbb{Q}$
and so it is index stable with the index $\aleph_0$ (similarly for $\{ \frac{p}{q}:p,q\in \mathbb{P}\cup\{1\}\}$).
\end{example}
The next theorem and its proof are similar to Theorem~\ref{rational criteria}.
\begin{theorem}[Reals index stability Criteria]\label{real criteria}
All the following conditions are necessary for $A$ to be non index stable.\\
$($a$)$ $A$ and $A^c$ are uncountable,\\
$($b$)$ $A-A$ is not a subgroup,\\
$($d$)$ the equation $(A-A)+X=\mathbb{R}$ has a countable solution in $2^\mathbb{R}$,\\
$($e$)$ the equation $(A-A)\cap (X-X)=\{0\}$ has a countable maximal solution.\\
Hence, if a subset $A$ of real numbers does not satisfy one of the conditions, then $A$ is index stable $($ in $\mathbb{R}$)
with indices 1, $\aleph_0$, or $2^{\aleph_0}$,  if we accept the CH.
\end{theorem}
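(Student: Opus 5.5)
The plan is to argue by contraposition, item by item. Assuming $A$ fails one of the listed conditions, we show $A$ is index stable with index in $\{1,\aleph_0,2^{\aleph_0}\}$, using only Theorem~\ref{InfiniteSubindexEquation}, Theorem~\ref{BasicTheorem} and the subfactor characterization (\ref{SubfactorCondition}). Under CH every subset of $\mathbb{R}$ is either countable or of size $2^{\aleph_0}$. That is where CH enters: it makes the cardinalities produced by (\ref{InfiniteSubindexidentity}) land in the stated list.

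For (a), suppose $A$ or $A^c$ is countable.
If $A^c$ is countable, then $|A|=2^{\aleph_0}>|A^c|$, and Theorem~\ref{InfiniteSubindexEquation}(b) gives $|\mathbb{R}:A|=1$.
If $A$ is countable and nonempty, then $|A|<|\mathbb{R}|$, and the same theorem gives $|\mathbb{R}:A|=2^{\aleph_0}$. If $A$ is empty, it is trivially index stable.
For (b), if $A-A\le\mathbb{R}$, then Theorem~\ref{BasicTheorem}(c) (and its left analogue, since $\mathbb{R}$ is abelian, so $\Dif_r(A)=\Dif_\ell(A)$) gives index stability with $|\mathbb{R}:A|=|\mathbb{R}:A-A|$. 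The index of a subgroup is $1$, a finite number, countable, or $2^{\aleph_0}$. Here I would note that the stated list $\{1,\aleph_0,2^{\aleph_0}\}$ is not quite exhaustive as written: finite indices such as $|\mathbb{R}:\mathbb{Q}\text{-subspace}|$ cannot occur for divisible subgroups of finite index, since a divisible group has no proper finite-index subgroups. So for $A-A\le\mathbb{R}$ with $A$ uncountable, the index is either $1$, or $\aleph_0$, or $2^{\aleph_0}$, where the last two use CH to exclude intermediate cardinals. I would flag that the subgroup $A-A$ need not be divisible, for example $\mathbb{Z}+V$. In that case I would simply record that finite indices may also occur, and say the statement should be read as ``index stable'' together with this remark.

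For (d), suppose $(A-A)+X=\mathbb{R}$ has no countable solution. Every right subfactor $B$ satisfies $(A-A)+B=\mathbb{R}$ by (\ref{SubfactorCondition}), so $|B|$ is uncountable, hence $|B|=2^{\aleph_0}$ by CH. Therefore $|\mathbb{R}:A|^-=|\mathbb{R}:A|^+=2^{\aleph_0}$. Since $\mathbb{R}$ is abelian, left and right subindices coincide, so $A$ is index stable.
For (e), suppose $(A-A)\cap(X-X)=\{0\}$ has no countable maximal solution. The maximal solutions are exactly $\SubF_r(\mathbb{R}:A)$, by (\ref{maximal in C(A)}) after translating so that $0\in X$. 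Hence every subfactor is uncountable, and the same CH argument gives the index $2^{\aleph_0}$.

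The main obstacle is the bookkeeping in case (b), where the subgroup index need not lie in the listed set. I would handle it as described above rather than try to force it into the list.
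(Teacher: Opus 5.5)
Your route is the paper's. The paper proves this exactly like the rational criterion: Theorem~\ref{InfiniteSubindexEquation} for (a), Theorem~\ref{BasicTheorem}(c) for (b), and the subfactor condition $(A-A)+B=\mathbb{R}$ together with CH for (d) and (e). Your cases (a), (d), (e) are correct.

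The error is in case (b). You assert that when $A-A$ is a non-divisible subgroup such as $\mathbb{Z}+V$, its index in $\mathbb{R}$ might be finite and larger than $1$. From this you conclude that the list $\{1,\aleph_0,2^{\aleph_0}\}$ is not exhaustive and the statement must be weakened. That is false, because the divisibility that matters is that of the ambient group $\mathbb{R}$, not of $A-A$.

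If $H\le\mathbb{R}$ has finite index $n$, then $nx\in H$ for every $x$ (Lagrange in $\mathbb{R}/H$). Since every real $y$ equals $n\cdot(y/n)$, it follows that $H=\mathbb{R}$. Equivalently, $\mathbb{R}/H$ is a quotient of a divisible group, hence divisible, and a nontrivial finite group is never divisible. The paper uses exactly this fact a few lines later ($\mathbb{F}$ has no proper subgroup of finite index).

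Your own example confirms this. $\mathbb{R}/\mathbb{Z}$ has cardinality $2^{\aleph_0}$. If $\mathbb{R}=\mathbb{Q}\oplus V$, then $\mathbb{R}/(\mathbb{Z}+V)\cong\mathbb{Q}/\mathbb{Z}$ is countably infinite. No choice of $V$ gives a finite index other than $1$.

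So in case (b) the index $|\mathbb{R}:A-A|$ is either $1$ or an infinite cardinal at most $2^{\aleph_0}$. CH, whose only role here is to exclude cardinals strictly between $\aleph_0$ and $2^{\aleph_0}$, then puts it in $\{1,\aleph_0,2^{\aleph_0}\}$. As written, your proposal establishes index stability in every case but does not prove the stated set of possible indices, and it wrongly claims the statement needs amending. Replacing your final remarks on (b) with the one-line divisibility argument above closes the gap.
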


For every subgroup $\mathbb{F}\neq 0$ of $(\mathbb{R},+)$
(or $\mathbb{C}$),
we can say that if $A$ is a subset of $\mathbb{F}$ such that $A$ (equivalently $Dif(A)$) is bounded, then
 ($Dif(A)$ is not syndetic and so) $\aleph_0\leq|\mathbb{F}:A|^-\leq |\mathbb{F}:A|^+\leq c$.
The next theorem gives more related properties for the case $\mathbb{F}=\mathbb{R}$.
\begin{theorem}
Let $A$ be a subset of real numbers.
\begin{enumerate}[(a)]
\item If $A$ contains an interval, then $|\mathbb{R}:A|^+\leq \aleph_0$.
\item Every bounded subset containing an interval is index stable with index $\aleph_0$.
\item If $|\mathbb{R}:A|^+>\aleph_0$, then $A$ does not contain any interval.
\item There are infinitely many uncountable subsets $A$ of real numbers with $|\mathbb{R}:A|^+=c$.
\end{enumerate}
\end{theorem}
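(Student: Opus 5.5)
Part (a) is a packing argument. Suppose $A\supseteq I=[u,v]$ with $u<v$, and let $B\in\SubF_r(\mathbb{R}:A)$. Then $A+B=A\dot{+}B$, so the translates $A+b$ for $b\in B$ are pairwise disjoint. In particular the intervals $I+b=[u+b,v+b]$ are pairwise disjoint. Each interval of positive length contains a rational, so choosing a rational $q_b\in(u+b,v+b)$ gives an injection $B\to\mathbb{Q}$, and $|B|\le\aleph_0$. Equivalently, since $A-A\supseteq(-(v-u),v-u)$ and $(A-A)\cap(B-B)=\{0\}$, distinct elements of $B$ differ by at least $v-u$, so $B$ is uniformly discrete and hence countable. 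Taking the supremum over all $B$ yields $|\mathbb{R}:A|^+\le\aleph_0$.

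Part (c) is just the contrapositive of (a).

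For (b), let $A$ be bounded and contain an interval. By the remark preceding the theorem (take $\mathbb{F}=\mathbb{R}$), $A-A$ is bounded, hence not syndetic, so $|\mathbb{R}:A|^-\ge\aleph_0$. Directly: if $(A-A)+X=\mathbb{R}$ with $X$ finite, then $\mathbb{R}$ would be bounded. Combined with (a), this gives $\aleph_0\le|\mathbb{R}:A|^-\le|\mathbb{R}:A|^+\le\aleph_0$, so $A$ is index stable with index $\aleph_0$.

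For (d), I will use Theorem~\ref{BasicTheorem}(g) together with Theorem~\ref{Subgroupsindex}. Take a subgroup $H\le\mathbb{R}$ with $|H|=c$ and $|\mathbb{R}:H|=c$. To construct one, regard $\mathbb{R}$ as a $\mathbb{Q}$-vector space of dimension $c$, pick a Hamel basis, split it into two parts of size $c$, and let $H$ be the span of one part; the span of the other part is then a transversal of size $c$. Every $A\subseteq H$ then satisfies $|\mathbb{R}:\langle A\rangle|\ge|\mathbb{R}:H|=c=|\mathbb{R}|$, so Theorem~\ref{BasicTheorem}(g) applies and gives that $A$ is index stable with $|\mathbb{R}:A|=c$. (Alternatively, Theorem~\ref{BasicTheorem}(d) gives $|\mathbb{R}:A|^\pm\ge c$.) It therefore suffices to take any family of uncountable subsets of $H$: for example $H$ itself and its translates $H\setminus\{h\}$, or simply all uncountable subsets of $H$, of which there are $2^c$ many. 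A more concrete choice is $A=rH$ for $r\in\mathbb{Q}^*$; these are subgroups of size $c$ and index $c$, so they are also index stable with index $c$.

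The only step needing any care is producing a subgroup $H$ of size and index both equal to $c$, and this rests on Hamel bases (the axiom of choice). Everything else is immediate from the earlier results.
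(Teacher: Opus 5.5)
Your proof is correct. In (a)--(c) you follow essentially the paper's argument. Because $A-A$ contains a neighbourhood of $0$, any $B$ with $A+B=A\dot{+}B$ is uniformly discrete, hence countable. The paper gets countability from an accumulation point of an uncountable $B$: it yields two distinct elements of $B$ whose difference lies in $\Dif(A)$. You instead inject $B$ into $\mathbb{Q}$ through the pairwise disjoint intervals $I+b$. This is the same fact, stated more cleanly. Part (b) is also as in the paper: bounded $A-A$ is not syndetic, so every subfactor is infinite.

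In (d) your route differs. The paper splits a Hamel basis as $T\cup U$, chooses $A$ with $\Dif(A)=\mathrm{Span}(T)$, and exhibits $B=\mathrm{Span}(U)$ as an explicit right subfactor of size $c$. You instead place $A$ inside a subgroup $H$ of index $c$ and apply Theorem~\ref{BasicTheorem}(g). Your version gives more. Every uncountable subset of $H$ is index stable with index $c$, so there are $2^{c}$ examples, which also justifies the ``infinitely many'' that the paper leaves implicit. It also correctly requires both halves of the basis to have cardinality $c$. The paper only asks for uncountable halves, and then $|B|=|U|$ equals $c$ only if $|U|=c$; this is automatic under CH but not in general.

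There are two harmless slips. First, $H$ is a $\mathbb{Q}$-subspace, so $rH=H$ for every $r\in\mathbb{Q}^*$, and that ``concrete choice'' yields only the single set $H$. Second, $H\setminus\{h\}$ is not a translate of $H$, although it is still an uncountable subset of $H$ and so still works. Neither slip affects the argument, since the family of all uncountable subsets of $H$ already suffices.
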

\begin{proof}
Note that this proof mentions the fact \cite{StackUncountableReals} in other words. If $A$ contains an interval, then $\Dif(A)$ contains a neighborhood of zero, and if
$B$ is uncountable, then it has an accumulation point and so contains a cauchy sequence $b_n$
such that all its elements are distinct. Therefore, $\Dif(A)\cap \Dif(B)$ is non-trivial and
so $B$ can not be a sub-factor related to $A$. Hence, $|\mathbb{R}:A|^+\leq \aleph_0$.
Also, $|\mathbb{R}:A|^-\geq \aleph_0$, since $A$ is bounded.
Thus, the parts (a) and (b) are proved, and part (c) is a direct result of (a).\\
Now, let $S$ be a basis for $\mathbb{R}$ as a vector space over $\mathbb{Q}$ and split
it as the union of two disjoint uncountable sets $T$ and $U$, put $B=\mbox{Span}\langle U\rangle$
and choose a subset $A$ of
the $\mathbb{Q}$-spans of $T$ such that $Dif(A)=\mbox{Span}\langle T\rangle$. Then,
$B\in \SubF(A)$ and $|B|=c$. Thus we arrive at (d).
\end{proof}
\begin{remark}
In view of the previous results, one find that the challenging subsets $A$ of real
numbers for studying sub-indices and index stability are included to one of the following classes:\\
(1) Bounded uncountable subsets that do not contain any interval. In this case, we have $\aleph_0\leq|\mathbb{R}:A|^-\leq |\mathbb{R}:A|^+\leq c$.
The Cantor type sets satisfy the conditions and so we arrive at the following question.
\begin{question}
Are the Cantor type sets index stable?
\end{question}
(2) Unbounded subsets containing an interval. this requires $1\leq|\mathbb{R}:A|^-\leq |\mathbb{R}:A|^+\leq \aleph_0$).
For this case, also we have a question.
\begin{question}
Does for every cardinal number $1\leq k \leq \aleph_0$ there exist an unbounded uncountable subset containing an interval with $|\mathbb{R}:A|=k$?
\end{question}
(3) Unbounded uncountable subsets that does not contain an interval. For example irrationals, transaudient numbers, etc.
\end{remark}
The last theorem implies that there are uncountable subsets of real
numbers with upper sub-indices $1$, $\aleph_0$ and $c$. Now,
regarding the fact that
$\mathbb{F}$ does not have any proper subgroup (and also sub-semigroup) with finite index,
we show that there are index stable subsets of $\mathbb{Q}$ and $\mathbb{R}$ with finite indices. Hence,
all nonzero subgroups of $\mathbb{R}$ are non-index stable, but they are index pervasive.
\begin{theorem}\label{pervasive}
The additive group of real (resp. complex) numbers and all its subgroups are index pervasive.
\end{theorem}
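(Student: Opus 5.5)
The plan is to treat finite and infinite cardinals separately, reducing the finite case to Lemma~\ref{pervasive integers} via Proposition~\ref{codif} and the infinite case to subgroups via Theorem~\ref{BasicTheorem}(c) and Theorem~\ref{InfiniteSubindexEquation}. Since $(\mathbb{C},+)\cong(\mathbb{R},+)$ and isomorphisms preserve subindices (as explained at the start of Section~3), it suffices to treat a subgroup $\mathbb{F}$ of $(\mathbb{R},+)$. The trivial subgroup is immediate, since only $\kappa=1$ occurs. So let $\mathbb{F}\neq 0$; it is then infinite and torsion-free. For $\kappa=1$ take $A=\mathbb{F}$.

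\emph{Finite $\kappa=n+1\geq 2$.} Fix $0\neq h\in\mathbb{F}$ and put $F=\{\pm h,\pm 2h,\dots,\pm nh\}$. This set is finite, symmetric, and avoids $0$ because $\mathbb{F}$ is torsion-free. Proposition~\ref{codif} then gives $A\subseteq\mathbb{F}$ with $\mathcal{C}(\mathbb{F}:A)=F$, so $\mathcal{C}^0(A)=h\{0,\pm1,\dots,\pm n\}$. By (\ref{maximal in C(A)}), every $B\in\SubF^0_r(\mathbb{F}:A)$ satisfies $B\subseteq B-B\subseteq h\mathbb{Z}$, and $B$ is maximal with respect to $0\in B$ and $B-B\subseteq\mathcal{C}^0(A)$. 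Pulling back along the isomorphism $h\mathbb{Z}\cong\mathbb{Z}$, the maximality argument in the proof of Lemma~\ref{pervasive integers} applies verbatim. Hence $B=h[-x,y]_\mathbb{Z}$ with $x+y=n$, so every such $B$ has exactly $n+1$ elements.

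Since translates of subfactors are subfactors, all subfactors have size $n+1$. Therefore $A$ is right index stable with $|\mathbb{F}:A|_r=n+1$. Because $\mathbb{F}$ is abelian, left and right notions coincide, so $A$ is index stable.

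\emph{Infinite $\kappa\leq|\mathbb{F}|$.} For $\kappa=|\mathbb{F}|$, any nonempty finite $A$ works by Theorem~\ref{InfiniteSubindexEquation}(b). This settles every countable $\mathbb{F}$. For $\aleph_0\leq\kappa<|\mathbb{F}|$, it suffices to exhibit a subgroup $H\leq\mathbb{F}$ with $|\mathbb{F}:H|=\kappa$, because a subgroup satisfies $\Dif(H)=H$ and is therefore index stable with index $|\mathbb{F}:H|$ by Theorem~\ref{BasicTheorem}(c).

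To construct $H$, let $V=\mathbb{Q}\mathbb{F}\subseteq\mathbb{R}$ and choose a $\mathbb{Q}$-basis $S\subseteq\mathbb{F}$ of $V$. Then $|S|=|\mathbb{F}|$, since $\mathbb{F}$ is uncountable in this case. Split $S=T\,\dot\cup\,U$ with $|T|=\kappa$, let $W=\mathrm{Span}_\mathbb{Q}(U)$, and put $H=\mathbb{F}\cap W$.

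The map $\mathbb{F}/H\hookrightarrow V/W$ is injective, and $V/W\cong\mathrm{Span}_\mathbb{Q}(T)$ has cardinality $\kappa$, so $|\mathbb{F}:H|\leq\kappa$. Conversely, distinct elements of $T$ lie in distinct cosets of $H$, since their differences are not in $W$. This gives $|\mathbb{F}:H|\geq\kappa$.

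\emph{Expected obstacle.} The main point needing care is the finite case. One must check that the maximality argument of Lemma~\ref{pervasive integers} is unaffected by working inside $\mathbb{F}$ rather than $\mathbb{Z}$. This holds because the constraint $B-B\subseteq h\mathbb{Z}$ already confines every candidate $B$ to $h\mathbb{Z}$, so the characterization (\ref{maximal in C(A)}) reduces the problem entirely to $h\mathbb{Z}$. The subgroup-index computation in the infinite case is routine linear algebra over $\mathbb{Q}$.
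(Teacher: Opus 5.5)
Your proposal is correct and follows essentially the same route as the paper. The finite indices come from Proposition~\ref{codif} plus the maximality argument of Lemma~\ref{pervasive integers}, and the infinite indices come from subgroups of index $\kappa$, which are index stable. There are two differences. In the finite case you work inside $h\mathbb{Z}$ via (\ref{maximal in C(A)}), whereas the paper rescales so that $1\in H$ and compares with a set $A'\subseteq\mathbb{Z}$ having the same $\mathcal{C}$. In the infinite case you actually construct the required subgroups (via a $\mathbb{Q}$-basis of $\mathbb{Q}\mathbb{F}$ chosen inside $\mathbb{F}$, with finite sets covering $\kappa=|\mathbb{F}|$), whereas the paper only asserts that they exist.
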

\begin{proof}
The identity subgroup is finite and index pervasive, clearly. Hence, let $H\neq \{0\}$ be a subgroup
of $(\mathbb{R},+)$ and we may assume $1\in H$ (because $H\cong \frac{1}{h}H$, for all $h\in H\neq \{0\}$). Given an integer $n>1$, put
$F=\{\pm1,\pm2,\dots,\pm (n-1)\}$, and if $n=1$, then put $F=\emptyset$. There is a subset $A$ of $H$ and a subset $A'$ of $\mathbb{Z}$ such that
$\mathcal{C}(H:A)=\mathcal{C}(\mathbb{Z}:A')=F$, by Proposition~\ref{codif}.
Thus
$$
\SubF^0(H:A)=\SubF^0(\mathbb{Z}:A')=\{[-x,y]_\mathbb{Z}: x,y\geq 0\; , \; x+y=n-1\},
$$
and so $|H:A|=n$, by Lemma~\ref{pervasive integers}. Now, we can complete the proof due to the fact that for every non-zero subgroup $H$
of $(\mathbb{R},+)$ and every cardinal number $\aleph_0\leq \kappa\leq |H|$, there is a subgroup of $H$ with
the index $\kappa$ in $H$.
\end{proof}

\section{Tables for subindices and index stability of subsets of a subgroup}
The following tables show that calculations of subindices of a subset $A\subseteq H\leq G$ depend on the finite and
infinite status of the index of $H$ in $G$. Then, for the case that $|G:H|$ is infinite, the values of subindices, and also
index stability of $A$ in $H$ and $G$, are dependent on the comparing status of $|G:H|$ with respect to $|H:A|^-$ and $|H:A|^+$.
We also design another useful (reduced) table for the case that $A$ is right index stable in $G$.
\begin{table}[h]
    \centering
    \renewcommand{\arraystretch}{1.5}
    \begin{tabular}{|l|l|}
        \hline
        \multicolumn{2}{|c|}{\textbf{Subindices cases for a subset $A\subseteq H\leq G$}} \\
        \hline
        \multirow{4}{*}{{\footnotesize $|G:H|$ is finite}} & $|G:A|^-$ is finite $\iff$ $|H:A|^-$ is finite , $|G:A|^+$ is finite $\iff$ $|H:A|^+$ is finite   \\
        \cline{2-2}
        &  $|H:A|^+ \mbox{ finite}\Rightarrow |G:A|^\pm\mbox{ finite},  1\leq \frac{|G:A|^-}{|G:H|}=|H:A|^-\leq |H:A|^+ = \frac{|G:A|^+}{|G:H|}\leq |H|$ \\
        \cline{2-2}
        & $A$ is right index stable in $G$ $\iff$ $A$ is right index stable in $H$\\
        \cline{2-2}
        & if $A$ is right index stable in $H$ or $G$, then (it is so in both, and) $|G:A|_r=|G:H||H:A|_r$  \\
        \hline
        \multirow{7}{*}{{\footnotesize $|G:H|$ is infinite}} & $|G:A|^\pm$ are infinite   \\
        \cline{2-2}
        &  if $|G:H|\leq |H:A|^-$, then $|G:A|^-=|H:A|^-$, $|G:A|^+=|H:A|^+$, and so $A$ is right index \\
        &  stable in $G$ if and only if $A$ is right index stable in $H$ (if it is the case, then $|G:A|_r=|H:A|_r$)\\
        \cline{2-2}
        &  if $|G:H|\geq |H:A|^+$, then $A$ is right index stable in $G$, and $|G:A|_r=|G:H|$\\
        \cline{2-2}
        & if $|H:A|^-< |G:H|< |H:A|^+$, then $|G:A|^+=|H:A|^+>|G:A|^-=|G:H|$. \\ & Thus, $A$ is not right index stable in $G$. \\
         \cline{2-2}
        & if $|H:A|^-=|G:H|< |H:A|^+$, then $|G:A|^+=|H:A|^+>|H:A|^-=|G:A|^-=|G:H|$. \\ & Thus, $A$ is not right index stable in both $G$ and $H$ \\
        \cline{2-2}
        & if $|H:A|^-< |G:H|=|H:A|^+$, then  $|G:A|^-=|H:A|^+=|G:H|=|G:A|^+$. \\ & Thus, $A$ is right index stable in $G$ ($|G:A|_r=|G:H|$) but not in $H$. \\
         \cline{2-2}
        & if $|H:A|^-=|G:H|=|H:A|^+$, then $A$ is right index stable in both $G$, $H$, \\ & and $|G:A|_r=|G:H|=|H:A|_r$  \\
        \hline
           \end{tabular}
   \caption{}
   \label{table2}
\end{table}

\begin{table}[h]
    \centering
    \renewcommand{\arraystretch}{1.5}
    \begin{tabular}{|l|l|}
        \hline
        \multicolumn{2}{|c|}{\textbf{Subindices cases for $A\subseteq H\leq G$ such that $A$ is right index stable in $G$}} \\
        \hline
       {\footnotesize $|G:H|$ is finite} &  $A$ is also index stable in $H$ and $|G:A|_r=|G:H||H:A|_r$   \\
        \hline
        \multirow{6}{*}{{\footnotesize $|G:H|$ is infinite}} & $\aleph_0\leq |G:H|\leq |G:A|_r=|G:H||H:A|^-=|G:H||H:A|^+$   \\
        \cline{2-2}
        &  $|H:A|^-\leq |G:H|$  $\Leftrightarrow$ $|G:A|_r=|G:H|$ $\Leftrightarrow$  $|H:A|^+\leq |G:H|$ \\
        \cline{2-2}
        & $|H:A|^-\geq |G:H|$ $\Leftrightarrow$ $A$ is index stable in $H$, $|G:A|_r=|H:A|_r$\\
        \cline{2-2}
        & $|H:A|^+\geq |G:H|$ $\Leftrightarrow$ $|H:A|^+=|G:A|_r=|G:H||H:A|^-$  \\
         \cline{2-2}
        & $|H:A|^+\geq |G:H|\geq |H:A|^-$ $\Leftrightarrow$ $|H:A|^+=|G:A|_r=|G:H|\geq |H:A|^-$  \\
        \cline{2-2}
        &  $|H:A|^->|G:H|$  $\Leftrightarrow$  $|H:A|^+> |G:H|$ $\Leftrightarrow$ $|H:A|^-=|H:A|^+>|G:H|$ \\
        \hline
           \end{tabular}
 \caption{}
  \label{table3}
\end{table}
For the case that $A$ is right index stable in $H$, we don't need another table, since it obtains
all the case results of Table\ref{table3} together with the condition  $|H:A|^+=|H:A|^-$.

\newpage

\bibliographystyle{amsplain}

\end{document}